\documentclass[11pt]{amsart}
\usepackage{amssymb}
\usepackage{amssymb, amsmath}
\usepackage{verbatim}
 \setlength{\oddsidemargin}{0mm}
\setlength{\evensidemargin}{0mm} \setlength{\topmargin}{-15mm}
\setlength{\textheight}{220mm} \setlength{\textwidth}{155mm}

\let\pa=\partial

\let\eps=\varepsilon

\let\s=\sigma
\let\f=\frac

\let\wt=\widetilde

\def\cA{{\mathcal A}}

\def\cC{{\mathcal C}}

\def\cO{{\mathcal O}}
\def\cP{{\mathcal P}}
\def\cQ{{\mathcal Q}}
\def\cR{{\mathcal R}}
\def\cS{{\mathcal S}}
\def\cT{{\mathcal T}}

\def\cV{{\mathcal V}}

\def\bH{{\mathbb H}}

\def\pa{\partial}


\def\Supp{\, \mbox{Supp}\,  }

\def\virgp{\raise 2pt\hbox{,}}
\def\cdotpv{\raise 2pt\hbox{;}}

\def\Id{\mathop{\rm Id}\nolimits}

\def\C{\mathop{\mathbb C\kern 0pt}\nolimits}
\def\DD{\mathop{\mathbb D\kern 0pt}\nolimits}
\def\EE{\mathop{{\mathbb E \kern 0pt}}\nolimits}
\def\K{\mathop{\mathbb K\kern 0pt}\nolimits}
\def\N{\mathop{\mathbb N\kern 0pt}\nolimits}
\def\Q{\mathop{\mathbb Q\kern 0pt}\nolimits}
\def\R{\mathop{\mathbb R\kern 0pt}\nolimits}
\def\SS{\mathop{\mathbb S\kern 0pt}\nolimits}
\def\ZZ{\mathop{\mathbb Z\kern 0pt}\nolimits}
\def\TT{\mathop{\mathbb T\kern 0pt}\nolimits}
\def\P{\mathop{\mathbb P\kern 0pt}\nolimits}
\def\H{\mathop{\mathbb H\kern 0pt}\nolimits}
\def\div{ \hbox{\rm div}\,  }
\def\curl{ \hbox{\rm curl}\,  }
\def\Ker{\hbox{\rm Ker}\,}

\newcommand{\dv}{{\delta\! v}^\eps}
\newcommand{\dT}{{\delta\!\Theta}^\eps}

\newcommand{\dV}{{\delta\!V}^\eps}

\newcommand{\Int}{\displaystyle \int}

\newcommand{\Sum}{\displaystyle \sum}

\def\ddj{\dot \Delta_j}

\newcommand{\Z}{{\ZZ}}

\def\na{\nabla}

\newcommand{\ef}{ \hfill $ \blacksquare $ \vskip 3mm}

\newcommand{\beq}{\begin{equation}}
\newcommand{\eeq}{\end{equation}}
\newcommand{\ben}{\begin{eqnarray}}
\newcommand{\een}{\end{eqnarray}}
\newcommand{\beno}{\begin{eqnarray*}}
\newcommand{\eeno}{\end{eqnarray*}}

\newtheorem{defi}{Definition}[section]
\newtheorem{thm}{Theorem}[section]
\newtheorem{lem}{Lemma}[section]
\newtheorem{rmk}{Remark}[section]

\newtheorem{prop}{Proposition}[section]

\begin{document}
\title[The  Oberbeck-Boussinesq approximation in critical spaces]
{The  Oberbeck-Boussinesq approximation in critical spaces}
\author[R. Danchin]{Rapha\"{e}l Danchin}
\address[R. Danchin]{Universit\'{e} Paris-Est,  LAMA (UMR 8050), UPEMLV, UPEC, CNRS, Institut Universitaire de France,
 61 avenue du G\'{e}n\'{e}ral de Gaulle, 94010 Cr\'{e}teil Cedex, France.} \email{raphael.danchin@u-pec.fr}
\author[L. He]{Lingbing He}
\address[L. He]{Department of Mathematical Sciences, Tsinghua University\\
Beijing 100084,  P. R.  China.} \email{lbhe@math.tsinghua.edu.cn}

\begin{abstract} In this paper we study the  validity of the so-called
Oberbeck-Boussinesq approximation for compressible viscous perfect gases  in the whole
three-dimensional space. Both the cases of fluids with positive heat conductivity and zero conductivity
are considered.  For  small perturbations of  a constant equilibrium,
we establish the global existence of unique strong solutions   in a critical regularity functional framework.
Next, taking advantage of Strichartz estimates for the associated system of acoustic
waves, and of uniform estimates with respect to the Mach number, we obtain
all-time convergence to the Boussinesq system with a explicit decay rate.
\end{abstract}
\maketitle

\setcounter{equation}{0}

\section{Introduction}

This work aims at giving  a mathematical justification of the \emph{Oberbeck-Boussinesq approximation}
that is commonly used to model stratified fluids such as e.g. atmosphere or oceans.
One of the characteristics of the this approximation is that, although the primitive system
is the full compressible Navier-Stokes system,
 the limit equations are incompressible, and the density is a  constant.  In fact, the velocity field
just convects an active scalar creating buoyancy force, proportional to the discrepancy between the temperature and
its equilibrium.

\subsection{Formal derivation}
The starting point of our analysis is the full Navier-Stokes system for compressible viscous fluids, namely
$$
  \left\{
    \begin{aligned}
      & \pa_t \rho+\div (\rho u)=0,\\
      &\pa_t(\rho u)+\div(\rho u\otimes u)-\div\tau
      +\frac{1}{{\rm Ma}^2}\nabla P=\frac1{{\rm Fr}^2}\,\rho\nabla V,\\
      &\pa_t(\rho s)+\div(\rho s)+\div(q/\mathcal{T})=\sigma.
    \end{aligned}\right.
  $$
Above
 $\rho=\rho(t,x)\in\R^+,$  $u=u(t,x)\in\R^3$ and
  $\mathcal{T}=\mathcal{T}(t,x)\in\R^+$
  stand  for the density, velocity field and  temperature, respectively.
   The scalar function $V$ stands for some (given) external potential  (e.g. the
 gravity potential).
 We concentrate on the study of the evolution toward the future
 in the whole space $\R^3$ (hence the time variable $t$ belongs to $\R^+$
 and the space variable $x,$ to $\R^3$).

  In the Newtonian case that we shall consider,
  the stress tensor $\tau$ is given by
  $$
  \tau=\mu(\nabla u+Du)+\lambda \div u\:{\rm Id}.
  $$
 For simplicity, the viscosity coefficients $\lambda$ and $\mu$ are assumed to be constant.
 As we only consider \emph{viscous} fluids, those two coefficients satisfy
   $$\mu>0\quad\hbox{and}\quad\nu:= \lambda+2\mu>0.$$
  This  ensures ellipticity for the second order operator $\mathcal{A}:= \mu\Delta+(\lambda+\mu)\na\div$.
\smallbreak \noindent
 The heat flux $q$ is equal to $-\kappa\nabla\cT$ for some  constant   conductivity coefficient $\kappa\geq0.$
 The pressure $P,$ the internal energy $e$ and the specific entropy $s$
 are   related to $\rho$ and $\cT$ through the Gibbs relation
  $${\mathcal T}\,ds=de+P\,d(1/\rho).$$

  We focus on perfect gases, namely we assume  that  for some $a>0$ and $b>0,$
   $P= a\rho \mathcal{T}$ and $e=b{\mathcal T}.$
   After rescaling, it is non restrictive to  take $a=b=1.$

  Finally, in the velocity equation, the Mach number  ${\rm Ma}$ and the Froude number ${\rm Fr}$ are two
  dimensionless small parameters
  accounting for the compressibility and the stratification of the fluid.
Formally,
Oberbeck-Boussinesq approximation is obtained in the  asymptotics $\eps\to0$ if
$$
{\rm Ma}=\eps\quad\hbox{and}\quad{\rm Fr}=\sqrt\eps,
$$
an assumption that we shall make from now on.
\medbreak
Gathering  all the above assumptions over the coefficients and state laws,
we end up with the following system (with exponents $\eps$ emphasizing the dependency
with respect to $\eps$):
\begin{equation}
  \left\{
    \begin{aligned}
      & \pa_t \rho^\eps+\div (\rho^\eps u^\eps)=0,\\
      &\pa_t(\rho^\eps u^\eps)+\div(\rho^\eps u^\eps\otimes u^\eps)-\mu\Delta u^\eps-(\lambda+\mu)\na\div u^\eps +\f{\na P^\eps}{\eps^2}=\f1\eps\rho^\eps\na V^\eps,\\
      &\pa_t(\rho^\eps \mathcal{T}^\eps)+\div(u^\eps \rho^\eps\mathcal{T}^\eps)-\kappa\Delta \mathcal{T}^\eps=\eps^2[2\mu|Du^\eps|^2+\lambda (\div u^\eps)^2].
    \end{aligned}
  \right.
  \label{eq:oNS}
\end{equation}

 Let us first provide a formal derivation of the Oberbeck-Boussinesq approximation
in the case where the heat conductivity $\kappa$ is positive.
We want to consider   so-called  \emph{ill-prepared data} of the form
   $\rho_0^\eps=1+\eps a_0^\eps,$ $u_0^\eps$  and $\mathcal{T}^\eps_0=1+\eps \theta_0^\eps$
   where $(a_0^\eps,u_0^\eps,\theta_0^\eps)$ are bounded in a sense that will be specified later on.
 Setting $\rho^\eps =1+\eps a^\eps$ and $\mathcal{T}^\eps=1+\eps \theta^\eps,$
we get the following governing equations for $(a^\eps,u^\eps,\theta^\eps)$:
\begin{equation}
  \left\{
    \begin{aligned}
      & \pa_t a^\eps+\f{\div u^\eps}{\eps}=-\div (a^\eps u^\eps),\\
      &\pa_t u^\eps + u^\eps\cdot\na u^\eps-\f{\mathcal{A}u^\eps}{1+\eps a^\eps} +\f{\na (a^\eps+\theta^\eps+\eps a^\eps\theta^\eps)}{\eps(1+\eps a^\eps)}=\f1\eps\na V^\eps,\\
      &\pa_t \theta^\eps  +\f{\div u^\eps}{\eps}+\div( \theta^\eps u^\eps) -\f{\kappa\Delta \theta^\eps}{1+\eps a^\eps}=\frac{\eps}{1+\eps a^\eps}[2\mu|Du^\eps|^2+\lambda (\div u^\eps)^2].
    \end{aligned}
  \right.
  \label{eq:hNS}
\end{equation}

In order to handle the singular potential term in the r.h.s. of the velocity equation, it is usual to
work with the \emph{modified} deviation of density $b^\eps:=a^\eps-V^\eps.$
We get
\begin{equation}  \label{eq:hNSb}
  \left\{
    \begin{aligned}
      & \pa_t b^\eps+u^\eps\cdot\nabla b^\eps+\f{\div u^\eps}{\eps}=-\pa_tV^\eps-\div(V^\eps u^\eps)-b^\eps \div u^\eps,\\
      &\pa_t u^\eps + u^\eps\cdot\na u^\eps-\cA u^\eps +\f{\nabla(b^\eps+\theta^\eps)}\eps
      =\biggl(\frac{a^\eps-\theta^\eps}{1+\eps a^\eps}\biggr)\nabla a^\eps-
      \f{\eps a^\eps}{1+\eps a^\eps} \cA u^\eps,\\
      &\pa_t \theta^\eps  +u^\eps\cdot\nabla\theta^\eps+\f{\div u^\eps}{\eps}
      -\kappa\Delta\theta^\eps=\frac{\eps}{1+\eps a^\eps}[2\mu|Du^\eps|^2+\lambda (\div u^\eps)^2]
      \\[-1ex]&\hspace{9cm}-\kappa\f{\eps a^\eps}{1+\eps a^\eps}\Delta \theta^\eps-\theta^\eps\div u^\eps,
    \end{aligned}
  \right.
\end{equation}
which may formally written as follows:
$$
\frac{\pa}{\pa t}\left(\begin{array}{c}b^\eps\\ u^\eps\\\theta^\eps\end{array}\right)
+\frac1\eps\left(\begin{array}{ccc}0&\div&0\\\nabla&0&\nabla\\0&\div&0\end{array}\right)
\left(\begin{array}{c}b^\eps\\ u^\eps\\\theta^\eps\end{array}\right)=\cO(1).
$$
The notation  $\cO(1)$  designates  terms that are expected to be bounded uniformly with respect to $\eps.$
\medbreak
As a consequence of our considering ill-prepared data,  the first order time derivatives
are likely to blow-up like $1/\eps$ for $\eps$ going to $0.$ At the `physical' level, this means that
 highly oscillating acoustic waves may propagate in the fluid.

 In order to better understand  the action of those singular terms,
 we may first look at the kernel $\Ker L$ of
 the $5\times 5$ first order  antisymmetric differential matrix operator $L$ above.
 The basic idea is that modes that are
in $\Ker L$ will not be affected, while modes that are in $(\Ker L)^\perp$ may experience  wild oscillations.
A straightforward computation shows that
$$\begin{array}{lll}
\Ker L&=&\Bigl\{(b,u,\theta): \div u=0\quad\hbox{and}\quad \nabla(b+\theta)=0\Bigr\},\\[1ex]
(\Ker L)^\perp&=&\Bigl\{(b,u,\theta): \curl u=0\quad\hbox{and}\quad \nabla(b-\theta)=0\Bigr\}\cdotp
\end{array}
$$
Hence it is natural to look more closely at the equations satisfied by
$(q^\eps,\cQ u^\eps)$ and $(\Theta^\eps,\cP u^\eps)$
where $\cP$ and $\cQ$ stand for the orthogonal projectors over divergence-free
and curl-free vector fields, respectively,  and
$$q^\eps:=\frac{\theta^\eps+b^\eps}{\sqrt2},\quad
\Theta^\eps:= \frac{\theta^\eps-b^\eps}{\sqrt2}\cdotp
$$
As $L$ is  antisymmetric, we expect
the oscillating components of the solution, namely $\cQ u^\eps$ and $q^\eps$ to be dispersed
whereas $L$ will have no effect on $\cP u^\eps$  and $\Theta^\eps.$ Let us be more accurate:
we see that $(q^\eps,\cQ u^\eps)$ satisfies
\begin{equation}\label{eq:osc}
  \left\{
    \begin{aligned}
      & \pa_t q^\eps +\f{\sqrt2}\eps{\div \cQ u^\eps}=-\div(q^\eps u^\eps)
      -\frac{\sqrt2}2\biggl(\pa_tV^\eps+\div(V^\eps u^\eps)+\kappa\frac{\Delta\theta^\eps}{1+\eps a^\eps}\biggr)\\
      &\hspace{7cm}+\frac{\sqrt2}2\frac{\eps}{1+\eps a^\eps}[2\mu|Du^\eps|^2+\lambda (\div u^\eps)^2],\\
      &\pa_t \cQ u^\eps + \f{\sqrt2}\eps\nabla q^\eps=\cQ\biggl(\biggl(\frac{a^\eps-\theta^\eps}{1+\eps a^\eps}\biggr)\nabla a^\eps-
      \f{\cA u^\eps}{1+\eps a^\eps} -u^\eps\cdot\nabla u^\eps\biggr)
    \end{aligned}
  \right.
\end{equation}
whereas $(\Theta^\eps,\cP u^\eps)$ fulfills
\begin{equation}\label{eq:nonosc}
 \! \left\{
    \begin{aligned}
      & \!\pa_t \Theta^\eps +\cP u^\eps\!\cdot\!\nabla\Theta^\eps-\frac\kappa2\Delta\Theta^\eps
      =-\div(\Theta^\eps\cQ u^\eps)\!+\!\frac{\sqrt2}2\bigl(\pa_tV^\eps+\cP u^\eps\!\cdot\!\nabla V^\eps\!+\!\div(V^\eps\cQ u^\eps)\bigr)
      \\&\hspace{2.8cm}+\f\kappa2\Delta q^\eps-\frac{\sqrt2\kappa}2\frac{\eps a^\eps}{1+\eps a^\eps}\Delta\theta^\eps
      +\frac{\sqrt2}2\frac{\eps}{1+\eps a^\eps}[2\mu|Du^\eps|^2+\lambda (\div u^\eps)^2],\\
      &\!\pa_t \cP u^\eps-\mu\Delta\cP u^\eps +\cP(\cP u^\eps\cdot\nabla\cP u^\eps)
      +\cP(\theta^\eps\nabla a^\eps)=-\cP\bigl( u^\eps\cdot\nabla\cQ u^\eps+\cQ u^\eps\cdot\nabla\cP u^\eps\bigr) \\&
     \hspace{6.3cm} -\cP\biggl(\frac{\eps a^\eps}{1+\eps a^\eps}\cA u^\eps\biggr)
      +\cP\biggl(\frac{\eps a^\eps(\theta^\eps-a^\eps)}{1+\eps a^\eps}\nabla a^\eps\biggr)\cdotp
          \end{aligned}
  \right.
\end{equation}

If we assume  the solution $(b^\eps,u^\eps,\theta^\eps)$ and the data to be  bounded independently of $\eps$
then the right-hand side of \eqref{eq:osc} is bounded, too.
Hence, owing to the antisymmetric (and nondegenerate) structure of the left-hand side of \eqref{eq:osc},
one may expect $(q^\eps,\cQ u^\eps)$ to tend weakly to $0.$
We shall see later on that in the whole space setting that is here considered, it is possible to get
strong convergence (for suitable negative Besov norms), with an explicit rate.
\smallbreak

In order to find out what the limit system  for \eqref{eq:nonosc} is, let
us observe that
\begin{align}\label{eq:relation}
\sqrt2\cP(\theta^\eps\nabla a^\eps)
&=\cP(\Theta^\eps\nabla V^\eps)+\cP(q^\eps\nabla V^\eps)
+\sqrt2\cP(\theta^\eps\nabla b^\eps)\nonumber\\
&=\cP(\Theta^\eps\nabla V^\eps)+\cP(q^\eps\nabla V^\eps)
+\sqrt2\cP((\theta^\eps+b^\eps)\nabla b^\eps)\nonumber\\
&=\cP(\Theta^\eps\nabla V^\eps)+\cP(q^\eps\nabla V^\eps)
+2\cP(q^\eps\nabla b^\eps).
\end{align}
Because $q^\eps$  tends  to $0,$ we expect that
$$
\sqrt2\cP(\theta^\eps\nabla a^\eps)-\cP(\Theta^\eps\nabla V^\eps)\rightarrow0\quad\hbox{for }\  \eps\ \hbox{ going to }\ 0.
$$
Hence, if we assume in addition that $V^\eps\to V,$ $\cP u^\eps_0\to v_0$ and $\Theta_0^\eps\to\Theta_0,$
then  $(\Theta^\eps,\cP u^\eps)$ should tend to the solution $(\Theta,v)$ to the following \emph{Boussinesq system:}
\begin{equation}
  \left\{
    \begin{aligned}
     &\pa_t \Theta+v\cdot \na \Theta-\f{\kappa}2 \Delta \Theta=\f{\sqrt2}2 (\pa_t+v\cdot\nabla)V,\\
          &\pa_t v+v\cdot\na v-\mu \Delta v+\na \Pi = -\f{\sqrt2}2\Theta\nabla V,\qquad\div v=0,\\
          &(\Theta,v)|_{t=0}=(\Theta_0,v_0).
    \end{aligned}
  \right.
  \label{eq:BouS}
  \end{equation}
  Setting $\tilde\Theta=\Theta-\sqrt2/2\:V,$ and changing $\nabla\Pi$ accordingly, we see that this system is equivalent to
  the following one, which is commonly used:
  \begin{equation}
  \left\{
    \begin{aligned}
     &\pa_t \tilde\Theta+v\cdot \na \tilde\Theta-\f{\kappa}2 \Delta \tilde\Theta=\f{\sqrt2}4\kappa\Delta V,\\
          &\pa_t v+v\cdot\na v-\mu \Delta v+\na\tilde\Pi = -\f{\sqrt2}2\tilde\Theta\nabla V,\qquad\div v=0.
    \end{aligned}
  \right.
  \label{eq:BouSprime}
  \end{equation}
   Note that  although the density is constant in the limit system, it comes into play in the buoyancy force where it
is related to the temperature and the potential.
\medbreak
We end this paragraph with a formal derivation \emph{in  the case  $\kappa=0.$}
It turns out to be easier to  work with the pressure rather than with
the temperature.
We thus set $\rho^\eps =1+\eps a^\eps$ and $P^\eps=\rho^\eps\mathcal{T}^\eps=1+ \eps (\mathcal{R}^\eps+V^\eps),$
and obtain that
\begin{equation}
  \left\{
    \begin{aligned}
      & \pa_t a^\eps+\f{\div u^\eps}{\eps}=-\div (a^\eps u^\eps),\\
      &\pa_t u^\eps + u^\eps\cdot\na u^\eps-\f{\mathcal{A}u^\eps}{1+\eps a^\eps} +\f{\na \mathcal{R}^\eps}{\eps(1+\eps a^\eps)}=\frac{a^\eps}{1+\eps a^\eps}\nabla V^\eps,\\
      &\pa_t \mathcal{R}^\eps  +\f{\div u^\eps}{\eps}+\div( \mathcal{R}^\eps u^\eps) = \eps [2\mu|Du^\eps|^2+\lambda (\div u^\eps)^2]
      -\pa_tV^\eps-\div(V^\eps u^\eps).
    \end{aligned}
  \right.
  \label{eq:nhNS}
  \end{equation}
  Setting $\Theta^\eps:=a^\eps-\cR^\eps-V^\eps,$ we thus get
   \begin{equation}
  \left\{
    \begin{aligned}
      & \!\pa_t \Theta^\eps+\div (\Theta^\eps u^\eps)=-\eps  [2\mu|Du^\eps|^2+\lambda (\div u^\eps)^2],\\
       &\!\pa_t\cP u^\eps \!+\! \cP(u^\eps\cdot\na u^\eps)-\mu\Delta\cP u^\eps=-\cP\biggl(\f{\eps a^\eps}{1+\eps a^\eps}{\mathcal{A}u^\eps}\biggr)
       +\cP\biggl(\f{a^\eps}{1+\eps a^\eps}\nabla(V^\eps\!+\!\cR^\eps)\biggr),\\
            &\!\pa_t\cQ u^\eps \!+\! \cQ(u^\eps\!\cdot\!\na u^\eps)-\nu\Delta\cQ u^\eps\!+\!\f{\na \mathcal{R}^\eps}\eps
            =-\cQ\biggl(\!\f{\eps a^\eps}{1\!+\!\eps a^\eps}{\mathcal{A}u^\eps}\!\biggr)
            \!+\!\cQ\biggl(\!\f{a^\eps}{1\!+\!\eps a^\eps}\nabla(V^\eps\!+\!\cR^\eps)\!\biggr),\\
      &\!\pa_t \mathcal{R}^\eps  +\f{\div u^\eps}{\eps}+\div( \mathcal{R}^\eps u^\eps) = \eps [2\mu|Du^\eps|^2+\lambda (\div u^\eps)^2]
      -\pa_tV^\eps-\div(V^\eps u^\eps).
    \end{aligned}
  \right.\!\!\!
  \label{eq:nhNSbis}
  \end{equation}

As before, owing to the first order antisymmetric terms, we expect $(\cQ u^\eps,\cR^\eps)$ to go to $0.$
Concerning $(\Theta^\eps,\cP u^\eps),$ we notice that
$$
 \cP(a^\eps\nabla (V^\eps+\cR^\eps))=\cP(\Theta^\eps\nabla V^\eps)+\cP(\Theta^\eps\nabla\cR^\eps).
$$
Therefore the limit system for $(\Theta^\eps,\cP u^\eps)$ reads
\begin{equation}
  \left\{
    \begin{aligned}
         &\pa_t \Theta+v\cdot \na \Theta =0,\\
     &\pa_t v+v\cdot\na v-\mu \Delta v+\na  \Pi =  \Theta\na V,\\
     &\div v=0.
    \end{aligned}
  \right.
  \label{eq:nBouS}
  \end{equation}
  Note that in contrast with \eqref{eq:BouS}, this system is not fully parabolic.


  \subsection{Some related works}

There is an important literature dedicated to the limit system, that is the Oberbeck-Boussinesq equations
\eqref{eq:BouS}, \eqref{eq:BouSprime} and  \eqref{eq:nBouS},
under various hypotheses over the coefficients $\kappa$ and $\mu,$ and the potential $V$
(although  the most common assumption is that $V=x_3$).
Loosely speaking the classical results concerning the existence issue are
(see e.g. \cite{DP1,DP2,He} and the references therein):
  \begin{itemize}
  \item Dimension $2$: Global existence of strong solutions if $(\mu,\kappa)\not=(0,0).$
  \item Dimension $3$ with $\mu\not=0$:  Global weak solutions and
   local strong solutions (which become global if the data are small).
   \item Dimension $3$ with $\mu=0$ : only local-in-time strong solutions are available.
  \end{itemize}

In contrast, although the Oberbeck-Boussinesq approximation is commonly
used in geophysics (see e.g. the books by J. Pedlosky \cite{pedlosky}  or R. K. Zeytounian \cite{Z})
there are  few results concerning the rigorous justification of the
derivation that we presented in the previous subsection.
  To our knowledge, the first  mathematical  justification of Oberbeck-Boussinesq approximation
  in this context has been given only rather recently in the framework of
  the so-called \emph{variational weak solutions} (see \cite{F} for a complete presentation of such solutions
  for the full Navier-Stokes equations).
  The case of  bounded domains with  potential
   $V=x_3$ (or more generally, in $W^{1,\infty}(\Omega)$) has been treated
  by E. Feireisl and A. Novotny in \cite{FN-book,FN-paper}, while
  the exterior domain case has been studied by  E. Feireisl and M. Schonbek  in \cite{FS}
  (still under the assumption  $V\in W^{1,\infty}(\Omega)$,  thus ruling out the common but not so physical assumption
  that $V=x_3$).
For passing to the limit,   all those works borrow some seminal ideas that have  been introduced by P.-L. Lions
  in his book
  \cite{Lions} and B. Desjardins et al  in \cite{DGLM}
in the related context of  low Mach number limit for the isentropic Navier-Stokes
equations\footnote{For other recent results concerning the low Mach number asymptotics for the \emph{full} Navier-Stokes equations, the reader may refer to \cite{Al,HL,Hoff,Klein}.}.
\medbreak
On the one hand, those results are very general for one may consider any finite energy data.
On the other hand, the convergence results are not very accurate for they strongly rely on compactness methods :
in particular convergence holds up to extraction only,  and  no rate  may be given.


\subsection{Aim of the paper}

Getting stronger results of convergence that is in particular convergence of the \emph{whole} sequence with an explicit rate,
is the main purpose of the present work.
Considering general variational solutions is hopeless. We shall
focus on strong solutions with the so-called critical regularity, a framework
which is nowadays classical for the study of viscous compressible fluids
(see e.g. \cite{BCD,D1,D2}).
Of course, this will enforce us to restrict considerably the set of admissible data, but
we will get much more accurate results of convergence.

Working in a functional framework that has the same scaling invariance as \eqref{eq:hNS}, if any, is the basic idea.
Here we see that (if $V^\eps\equiv0$ to simplify the presentation), the system
is ``almost'' invariant for all $\ell>0$ by the rescaling
$$
a^\eps(t,x)\to a^\eps(\lambda^2t,\lambda x),\quad
u^\eps(t,x)\to \lambda u^\eps(\lambda^2t,\lambda x),\quad
\theta^\eps(t,x)\to \lambda^2\theta^\eps(\lambda^2t,\lambda x).
$$
If we believe in an energy type method then
a good candidate for initial data is thus the homogeneous Sobolev space
$$
\dot H^{\f32}(\R^3)\times\bigl(\dot H^{\f12}(\R^3)\bigr)^3\times\dot H^{-\f12}(\R^3),
$$
or rather the slightly smaller following homogeneous Besov space:
$$
\dot B^{\frac32}_{2,1}(\R^3)\times\bigl(\dot B^{\frac12}_{2,1}(\R^3)\bigr)^3
\times\dot B^{-\frac12}_{2,1}(\R^3)
$$
which has nicer embedding properties ($\dot B^{\f32}_{2,1}$ is embedded in bounded functions for instance)
and better behaves with respect to maximal parabolic estimates.\medbreak
\smallbreak
However, owing to the lower order pressure term,  the above scaling invariance is not quite respected.
Consequently,  we  have to work at a different level of
regularity for the low frequencies  of  $a^\eps$ and $\theta^\eps,$
to compensate this scaling defect.
All this is now well understood and already occurs in
the isentropic case \cite{D1}.

Finally, in the case $\kappa=0$ that we shall also consider (and that cannot be studied in the framework of
variational solutions), only the velocity is smoothed out during the evolution, and it is no longer
possible to use a critical regularity framework: we will have to assume much more regularity.
\bigbreak
We end this introductory part with a short description of the rest of the paper.
After an unavoidable introduction of some notations and functional spaces,
the next section is  devoted to the presentation of the main results of the paper.
The analysis of the heat conducting case is carried out in Section \ref{s:kappa}
while $\kappa=0$ is considered in Section \ref{s:zero}.
Some technical estimates are postponed in the Appendix.


\section{Results}\label{s:results}

Before presenting the main statements of the paper, we briefly introduce some notations and  function spaces.
We are given an homogeneous Littlewood-Paley decomposition
$(\ddj)_{j\in\Z}$ that is a dyadic decomposition in the Fourier space for $\R^3.$
One may for instance set $\ddj:=\varphi(2^{-j}D)$ with $\varphi(\xi):=\chi(\xi/2)-\chi(\xi),$
and $\chi$ a non-increasing nonnegative smooth function supported in $B(0,4/3),$ and value $1$ on $B(0,3/4)$
 (see \cite{BCD}, Chap. 2 for more details).

We then define, for $p\in[1,+\infty]$ and $s\in\R,$ the semi-norms
$$
\|z\|_{\dot B^s_{p,1}}:=\sum_{j\in\Z}2^{js}\|\ddj z\|_{L^p}.
$$

In order to avoid complications due to polynomials, we adopt the following definition of homogeneous
Besov spaces:
$$
\dot B^s_{p,1}=\Bigl\{z\in\cS'(\R^3) : \|z\|_{\dot B^s_{p,1}}<\infty \ \hbox{ and }\ \lim_{j\to-\infty} \dot S_jz=0\Bigr\}
\quad\hbox{with }\ \dot S_j:=\chi(2^{-j}D).$$

To compensate the lack of strict scaling invariance of the system under consideration (as pointed
out in the previous section), we  also need to  introduce the following  \emph{hybrid Besov spaces}
with different regularity exponent  in low and high frequencies:
\begin{defi} For  $s\in \R,$ $p\in[1,\infty]$  and $\alpha>0,$ we set
$$ \|z\|_{\tilde{B}^{s,\pm}_{p,\alpha}}:= \sum_{j\in\Z} 2^{js}\bigl(\min(\alpha^{-1},2 ^j)\bigr)^{\pm1}
\|\ddj z\|_{L^p}
$$
and  define
$$
\tilde{B}^{s,\pm}_{p,\alpha}:=\Bigl\{z\in \mathcal{S}'(\R^3) :\|z\|_{\tilde{B}^{s,\pm}_{p,\alpha}}<\infty\} \ \hbox{ and }\ \lim_{j\to-\infty} \dot S_jz=0\Bigr\}\cdotp
$$
\end{defi}
We shall mainly use the above definition with $p=2,$ in which case, the corresponding hybrid Besov space
will be simply denoted by $\tilde B^{s,\pm}_\alpha,$ if the fact that $p=2$ is clear from the context.
\medbreak
We agree that\footnote{We omit the dependency with respect to the threshold $\alpha$ in the above notation
because the value of $\alpha$ will be always clear from the context.}:
\begin{equation}
\label{eq:decompo}  z^\ell:=\sum_{2^j\alpha\leq 1} \ddj z\quad\mbox{and}\quad z^h:=\sum_{2^j\alpha>1} \ddj z. \end{equation}
With this notation, we have
$$
 \|z\|_{\tilde{B}^{s,\pm}_{p,\alpha}}=\|z^\ell\|_{\dot B^{s\pm1}_{p,1}}+\alpha^{\mp1}\|z^h\|_{\dot B^s_{p,1}}.
$$
Therefore $\dot B^s_{p,1}$ is the bulk regularity of a function in $\tilde B^{s,\pm}_{p,\alpha}$  while
the  behavior at infinity is given by the low frequency part which is in $\dot B^{s\pm1}_{p,1}.$
Of course, changing the value of $\alpha$ does not affect the space, and the corresponding norms
are equivalent. However a suitable choice of  $\alpha$  will enable us to get uniform estimates with respect to $\eps.$

As we shall work with \emph{time-dependent functions} with values in Besov spaces,
we  introduce the norms:
$$
\|u\|_{L^q_T(\dot B^s_{p,1})}:=\bigl\| \|u(t,\cdot)\|_{\dot B^s_{p,1}}\bigr\|_{L^q(0,T)}
\quad\hbox{and}\quad
\|u\|_{L^q_T(\tilde B^{s,\pm1}_{p,\alpha})}:=\bigl\| \|u(t,\cdot)\|_{\tilde B^{s,\pm1}_{p,\alpha}}\bigr\|_{L^q(0,T)}.
$$
As in many works using parabolic estimates in Besov spaces, it is somehow natural
to take the time-Lebesgue norm \emph{before} performing the summation for computing
the Besov norm.  This motivates us to introduce the following quantities:
$$
 \|u\|_{\tilde L_T^q(\dot {B}^{s}_{p,1})}:= \sum_{j\in\Z} 2^{js}
\|\ddj u\|_{L_T^q(L^p)}\!\!\quad\hbox{and}\quad\!\!
 \|u\|_{\tilde L_T^q(\tilde{B}^{s,\pm}_{p,\alpha})}:= \sum_{j\in\Z} 2^{js}\bigl(\min(\alpha^{-1},2 ^j)\bigr)^{\pm1}
\|\ddj u\|_{L_T^q(L^p)}.
$$
The index $T$ will be omitted if $T=+\infty$ and we shall denote by $\tilde C(\dot B^s_{p,1})$
(resp. $\tilde C(\tilde B^{s,\pm}_{p,\alpha})$) the subset of $\tilde L^\infty(\dot B^s_{p,1})$
(resp. $\tilde L^\infty(\tilde B^{s,\pm}_{p,\alpha})$) constituted by continuous functions over $\R^+$ with values in $\dot B^s_{p,1}$
(resp. $\tilde B^{s,\pm}_{p,\alpha}$).
\smallbreak
Let us emphasize that, owing to Minkowski inequality, we have
$$
 \|u\|_{L_T^q(\dot {B}^{s}_{p,1})}\leq \|u\|_{\tilde L_T^q(\dot {B}^{s}_{p,1})}
$$
with equality if and only if $q=1.$ Similar properties hold for hybrid Besov spaces.
\smallbreak
Throughout, we shall denote
\begin{equation}\label{eq:coeff}
\tilde\kappa:=\kappa/\nu,\quad\tilde\lambda:=\lambda/\nu,\quad\tilde\mu:=\mu/\nu\ \hbox{ with }\
\nu:=\lambda+2\mu.
\end{equation}

One can  state  our first main result : the global existence of solutions corresponding to small (critical) data
with \emph{estimates independent of $\eps$} in the case $\kappa>0.$
\begin{thm}\label{th:main1}
Assume that the initial data $(b_0^\eps, u_0^\eps, \theta_0^\eps)$ and that the potential term $V^\eps$ satisfy,
for a small enough constant $\eta$ \emph{depending only on
$\tilde\kappa$ and $\tilde\mu$}:
\begin{eqnarray}\label{eq:smalldata1}
&&\|b_0^\eps\|_{\tilde B^{\f32,-}_{\eps\nu}}+\|u_0^\eps\|_{\dot B^{\f12}_{2,1}}+
\|\theta_0^\eps\|_{\tilde{B}^{-\f12,+}_{\eps \nu}}
\leq \eta \nu,\\\label{eq:smalldata2}
&&\nu^{\frac12}\|\na V^\eps\|_{L^2(\tilde B^{\f32,-}_{\eps\nu})}+
\|V^\eps \|_{\tilde{L}^\infty(\tilde B^{\f32,-}_{\eps\nu})}+\|\pa_tV^\eps \|_{L^1(\tilde B^{\f32,-}_{\eps\nu})}
\leq\eta\nu.
\end{eqnarray}
Let $a_0^\eps:=b_0^\eps+V^\eps(0).$
Then System \eqref{eq:hNS} with
initial data $(1+\eps a_0^\eps, u_0^\eps,1+\eps\theta_0^\eps)$ has a unique global
solution $(a^\eps,u^\eps,\theta^\eps)$ (with $a^\eps=b^\eps+V^\eps$)
 which satisfies
 $$b^\eps\in\tilde C(\tilde B^{\f32,-}_{\eps\nu})\cap L^1(\tilde B^{\f32,+}_{\eps\nu}),\quad
 u^\eps\in\tilde C(\dot B^{\f12}_{2,1})\cap L^1(\dot B^{\f52}_{2,1}),\quad
\theta^\eps\in\tilde C(\tilde B^{-\f12,+}_{\eps\nu})\cap L^1(\tilde B^{\f32,+}_{\eps\nu})
$$
and, for a  constant $K$ depending only on $\tilde\kappa$  and $\tilde\mu,$
 $$\displaylines{
 \|b^\eps\|_{\tilde L^\infty(\tilde B^{\f32,-}_{\eps\nu})}+\nu\|b^\eps\|_{L^1(\tilde B^{\f32,+}_{\eps\nu})}
 +\|u^\eps\|_{\tilde L^\infty(\dot B^{\f12}_{2,1})}+\nu\|u^\eps\|_{L^1(\dot B^{\f52}_{2,1})}
 +\|\theta^\eps\|_{\tilde L^\infty(\tilde B^{-\f12,+}_{\eps\nu})}\hfill\cr\hfill+\nu\|\theta^\eps\|_{L^1(\tilde B^{\f32,+}_{\eps\nu})}\leq
K\bigl(\|b_0^\eps\|_{\tilde B^{\f32,-}_{\eps\nu}}+\|u_0^\eps\|_{\dot B^{\f12}_{2,1}}+
\|\theta_0^\eps\|_{\tilde{B}^{-\f12,+}_{\eps \nu}}+\|\pa_tV^\eps \|_{L^1(\tilde B^{\f32,-}_{\eps\nu})}\bigr).}
$$
 \end{thm}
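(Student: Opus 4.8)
\medskip

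\noindent\textbf{Proof strategy.}
The plan is to adapt to the singular system \eqref{eq:hNSb} the by-now classical Littlewood--Paley analysis of the compressible Navier--Stokes equations in critical regularity (see \cite{D1,D2,BCD}), the decisive additional difficulty being that every constant has to be tracked with respect to $\eps$ and $\nu$ — which is exactly what the hybrid spaces $\tilde B^{s,\pm}_{\eps\nu}$ and the threshold $|\xi|\sim(\eps\nu)^{-1}$ are tailored for.

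\textbf{Step~1: uniform linear estimates.}
I would freeze all the nonlinear and potential contributions of \eqref{eq:hNSb} into a generic source $(F,G,H)$ and first establish, \emph{uniformly in $\eps,$} an a priori estimate for the constant-coefficient linearized system
$$
\pa_t b+\f{\div u}\eps=F,\qquad \pa_t u-\cA u+\f{\na(b+\theta)}\eps=G,\qquad \pa_t\theta+\f{\div u}\eps-\kappa\Delta\theta=H.
$$
After localising in frequency by $\ddj$ and splitting $u=\cP u+\cQ u,$ the divergence-free part $\cP u$ just solves a heat equation with viscosity $\mu,$ treated by standard maximal regularity in $\dot B^{\f12}_{2,1}.$ The genuinely new object is the coupled $3\times3$ ``acoustic--thermal'' block $(b,\cQ u,\theta)$ — equivalently $(q^\eps,\cQ u^\eps,\Theta^\eps)$ in the variables of \eqref{eq:osc}--\eqref{eq:nonosc}: here I would run, \emph{for each dyadic frequency $2^j$ separately,} a normal-mode (equivalently, a Lyapunov-functional) analysis of the associated symbol, adding to the plain $L^2$ energy a small multiple of cross terms such as $\langle\na\ddj q,\ddj\cQ u\rangle$ so that the skew-symmetric $1/\eps$ coupling, combined with the viscous damping $\nu|\xi|^2$ on $\cQ u$ and the heat damping $\kappa|\xi|^2$ on $\theta,$ produces an \emph{effective} damping of all three components. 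In the low-frequency regime $2^j\eps\nu\le1$ the lower-order pressure term is dominant and forces $b^\eps$ to be controlled one derivative below its bulk regularity $\dot B^{\f32}_{2,1}$ (this is the scaling defect already met in the isentropic system \cite{D1}, encoded by $\tilde B^{\f32,-}_{\eps\nu}$), while the genuine parabolic smoothing of the slow temperature mode is encoded by $\tilde B^{-\f12,+}_{\eps\nu};$ in the regime $2^j\eps\nu>1$ the parabolic smoothing wins outright and all unknowns gain two full derivatives. The two regimes match exactly at $|\xi|\sim(\eps\nu)^{-1},$ the frequency at which the acoustic speed $1/\eps$ balances the viscous scale $\nu|\xi|,$ and summing over $j$ with the weights of the hybrid norms yields
$$
\|(b,u,\theta)\|_{\tilde L^\infty_T(X)}+\nu\|(b,u,\theta)\|_{L^1_T(Y)}\le C\bigl(\|(b_0,u_0,\theta_0)\|_X+\|(F,G,H)\|_{L^1_T(X)}\bigr),
$$
with $X:=\tilde B^{\f32,-}_{\eps\nu}\times\dot B^{\f12}_{2,1}\times\tilde B^{-\f12,+}_{\eps\nu},$ with $Y$ its analogue carrying two extra derivatives, and $C=C(\tilde\kappa,\tilde\mu).$

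\textbf{Step~2: nonlinear and potential terms.}
Next I would substitute the true right-hand side of \eqref{eq:hNSb} for $(F,G,H)$ and bound it in $L^1_T(X)$ by means of Bony's paraproduct decomposition together with the product and composition laws in Besov spaces recalled in \cite{BCD}. The quadratic velocity contributions $u^\eps\cdot\na u^\eps$ and $2\mu|Du^\eps|^2+\lambda(\div u^\eps)^2$ are absorbed by $\|u^\eps\|_{\tilde L^\infty(\dot B^{\f12}_{2,1})}\|u^\eps\|_{L^1(\dot B^{\f52}_{2,1})}$ after a suitable low/high frequency split; the convection and pressure-type terms carrying $\na a^\eps=\na b^\eps+\na V^\eps$ are estimated using that $b^\eps\in\tilde B^{\f32,-}_{\eps\nu}$ and $V^\eps\in\tilde B^{\f32,-}_{\eps\nu};$ and the nonlinear factors $1/(1+\eps a^\eps)$ and $\eps a^\eps/(1+\eps a^\eps)$ are dealt with by composition, once one observes that the smallness conditions \eqref{eq:smalldata1}--\eqref{eq:smalldata2} together with the Step~1 bound keep $\|\eps a^\eps\|_{L^\infty}$ small (again the hybrid norms are crucial here: on low frequencies $\eps\,2^j\le\nu^{-1},$ and the high-frequency part comes with the saving factor $(\eps\nu)^{-1}$). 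The potential only enters through $\pa_tV^\eps$ and through bilinear terms such as $\div(V^\eps u^\eps)$ or $\Theta^\eps\na V^\eps,$ all controlled by \eqref{eq:smalldata2}, so that
$$
\|(F,G,H)\|_{L^1_T(X)}\le C\bigl(E(T)^2+\eta\,\nu\,E(T)+\|\pa_tV^\eps\|_{L^1(\tilde B^{\f32,-}_{\eps\nu})}\bigr),
$$
where $E(T)$ is the left-hand side of the estimate in the statement.

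\textbf{Step~3: closing the argument, then existence and uniqueness.}
Combining Steps~1 and~2 gives $E(T)\le C\bigl(E_0+\|\pa_tV^\eps\|_{L^1}\bigr)+C\,E(T)^2$ on the lifespan of any solution, where $E_0$ is the data norm; choosing $\eta$ small enough (depending only on $\tilde\kappa$ and $\tilde\mu$) and running a continuity argument then keeps $E(T)\le2C\bigl(E_0+\|\pa_tV^\eps\|_{L^1}\bigr)$ for all $T,$ which is exactly the claimed global, $\eps$-uniform bound. Existence follows by feeding these a priori estimates into a Friedrichs mollification of \eqref{eq:hNSb} (or into the iterative scheme of \cite{D1}): the uniform bounds yield weak-$*$ compactness and one passes to the limit in the — essentially bilinear — nonlinear terms. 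Uniqueness is obtained through a stability estimate for the difference of two solutions, performed one derivative below the existence space, exactly as for the isentropic compressible system.

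\textbf{Main obstacle.}
The delicate point is Step~1: the per-block Lyapunov functionals must be designed so that \emph{all} constants are independent of $\eps$ and $\nu,$ so that the low- and high-frequency regimes glue together consistently through the hybrid norms, and so that the seemingly singular contributions involving $\pa_tV^\eps$ and $\kappa\Delta\theta^\eps$ that appear in \eqref{eq:osc}--\eqref{eq:nonosc} land in the correct spaces. The nonlinear Step~2 is, by contrast, routine Besov calculus, the only vigilance being to keep track of every power of $\nu$ and every constant of the form $C(\tilde\kappa,\tilde\mu).$
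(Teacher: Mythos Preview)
Your overall architecture (linear estimates $\to$ nonlinear product estimates $\to$ bootstrap) matches the paper's, but there is one genuine gap that would make Step~2 fail as stated.

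You propose to put \emph{all} nonlinear terms, in particular the convection $u^\eps\cdot\nabla b^\eps,$ into the source $F$ and to bound $\|F\|_{L^1_T(\tilde B^{3/2,-}_{\eps\nu})}$ by paraproduct calculus. This cannot work: in the high-frequency regime $2^j\eps\nu>1,$ the component $b$ is \emph{not} parabolically smoothed (your sentence ``all unknowns gain two full derivatives'' is wrong for $b$); it is merely damped, so the $L^1_t$ and $L^\infty_t$ norms of $b^h$ live at the \emph{same} regularity $\dot B^{3/2}_{2,1}.$ Consequently the paraproduct piece $T_{u^k}\pa_k b$ only lies in $\tilde B^{1/2,-}_{\eps\nu},$ one derivative short of what you need for the Duhamel argument. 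The paper makes this point explicitly and cures it by \emph{paralinearising}: the term $T_{u^k}\pa_k b$ (and, for symmetry, $T_{u^k}\pa_k d,$ $T_{u^k}\pa_k\theta,$ $T_{u^k}\pa_k w$) is kept on the left of the linearised system, and the per-block Lyapunov computation is redone with those extra terms present, using the commutator-type inequality
\[
\bigl|(\ddj(T_{v^k}\pa_k z)\,|\,\ddj z)_{L^2}\bigr|\lesssim\|\nabla v\|_{L^\infty}\|\ddj z\|_{L^2}\!\!\sum_{|j'-j|\le N}\!\!\|\dot\Delta_{j'}z\|_{L^2}
\]
to absorb them via Gronwall. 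Only the remainder $T'_{\pa_k b}u^k=u\cdot\nabla b-T_{u^k}\pa_k b$ is thrown to the right, and \emph{that} piece does sit in $L^1(\tilde B^{3/2,-}_{\eps\nu}).$ Without this modification your Step~2 inequality for $\|F\|_{L^1_T(X)}$ is false.

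A second, more minor point: the paper first rescales to $\eps=\nu=1$ via $(b,u,\theta)(t,x)=\eps(b^\eps,u^\eps,\theta^\eps)(\eps^2\nu t,\eps\nu x),$ which turns the hybrid threshold $(\eps\nu)^{-1}$ into $1$ and removes any need to ``track every power of $\nu$'' in the linear analysis. This is not a logical gap in your plan, but it greatly simplifies the Lyapunov computation you describe in Step~1.
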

 \begin{rmk}\label{r:smooth}
 Smoother data give rise to smoother solutions.
 For example if in addition to the above hypotheses, we have
\beno &&\eps\|b_0^\eps\|_{\tilde B^{\f52,-}_{\eps\nu}}
+\nu^{-1}\|(\theta_0^\eps,u_0^\eps)\|_{\tilde B^{\f32,-}_{\eps\nu}}
 +\eps\|\pa_tV^\eps \|_{L^1_t(\tilde B^{\f52,-}_{\eps\nu})}+\eps
 \|\nabla V^\eps\|_{L^2(\tilde B^{\f52,-}_{\eps\nu})}
\leq\eta,
\eeno then the above solution also satisfies
 \beno  && \eps\| b^\eps \|_{\tilde L^\infty(\tilde B^{\f52,-}_{\eps\nu})}
 +\nu^{-1}\|(\theta^\eps,u^\eps) \|_{\tilde L^\infty( \tilde B^{\f32,-}_{\eps\nu})}
 +\eps\nu\| b^\eps \|_{L^1( \tilde B^{\f52,+}_{\eps\nu})}
 +\eps\nu\|(u^\eps,\theta^\eps)\|_{L^1(\tilde B^{\f72,-}_{\eps\nu})}\leq K\eta.\eeno
  \end{rmk}
 Next, combining this result with Strichartz estimates, we shall prove the following
 result of convergence to the Boussinesq system.
 \begin{thm}\label{th:main2}
Consider a family of data $(b_0^\eps,u_0^\eps,\theta_0^\eps,V^\eps)_{\eps>0}$ satisfying the conditions of
Theorem \ref{th:main1} with in addition
\begin{equation}\label{eq:unif}
\begin{aligned}
M_0:=\sup_{\eps>0}\bigl(
\|b_0^\eps\|_{\tilde B^{\f32,-}_{\eps\nu}}&+\|u_0^\eps\|_{\dot B^{\f12}_{2,1}}+
\|\theta_0^\eps\|_{\tilde{B}^{-\f12,+}_{\eps \nu}}\\&+
\nu^{\f12}\|\nabla V^\eps\|_{L^2(\tilde B^{\f32,-}_{\eps\nu})}+
\|V^\eps \|_{\tilde L^\infty(\tilde B^{\f32,-}_{\eps\nu})}+\|\pa_tV^\eps \|_{L^1(\tilde B^{\f32,-}_{\eps\nu})}\bigr)
\leq\eta\nu.\end{aligned}
\end{equation}
Let  $q^\eps:=(\theta^\eps+b^\eps)/\sqrt2$ and $\Theta^\eps:=(\theta^\eps-b^\eps)/\sqrt2.$
Assume that $(\cP u^\eps_0,\Theta^\eps_0,V^\eps)$ converges (in the sense of distributions)
to  some triplet   $(v_0,\Theta_0,V)$ such that
$$
v_0\in\dot B^{\f12}_{2,1},\quad \Theta_0\in\dot B^{\f12}_{2,1},\quad
\nabla V\in L^2(\dot B^{\f12}_{2,1}),\quad\pa_tV\in L^1(\dot B^{\f12}_{2,1}).$$
Then the following properties hold true~:
\begin{enumerate}
\item  System \eqref{eq:hNS} with
initial data $(1+\eps a_0^\eps, u_0^\eps,1+\eps\theta_0^\eps)$ has a unique global  solution
with the properties described in Theorem \ref{th:main1};
\item  Boussinesq system \eqref{eq:BouS} admits a unique global solution
$(v,\Theta)$ in $\tilde \cC(\dot B^{\f12}_{2,1})\cap L^1(\dot B^{\f52}_{2,1})$ satisfying
for some constant $K=K(\tilde\kappa,\tilde\mu)$:
$$
\|(v,\Theta)\|_{\tilde L^\infty(\dot B^{\f12}_{2,1})}+\nu  \|(v,\Theta)\|_{L^1(\dot B^{\f52}_{2,1})}
\leq K\bigl(\|(v_0,\Theta_0)\|_{\dot B^{\f12}_{2,1}}+\|\pa_tV\|_{L^1(\dot B^{\f12}_{2,1})}\bigr).
$$
\item The functions $q^\eps$ and $\cQ u^\eps$ go to $0$ in the following meaning
for all $p\in[2,\infty]$ and $s\in[-1/2+4/p,3/p]$:
$$
\nu^{\f12}\|q^\eps\|_{\tilde{L}^2(\tilde B^{s-1,+}_{p,\eps\nu})}+
\nu^{\f12}\|\cQ u^\eps\|_{\tilde{L}^2(\dot B^{s}_{p,1})} \leq K(\eps\nu)^{\f3p-s}M_0.
$$
\item The couple $(\cP u^\eps,\Theta^\eps)$ tends to $(v,\Theta)$ in the following meaning
for all $p\in[2,\infty]$ and $s\in[-1/2+4/p,3/p]$ with $s>1/2:$
$$\displaylines{\nu^{1/2}\|\dT\|_{\tilde L^2(\tilde B^{s-1,+}_{p,\eps\nu})}
+\|\dT\|_{\tilde L^\infty(\tilde B^{s-2,+}_{p,\eps\nu})}+
\nu\|\dv\|_{L^1(\tilde B^{s,+}_{p,\eps\nu})}+ \|\dv\|_{\tilde L^\infty(\tilde B^{s-2,+}_{p,\eps\nu})}\hfill\cr\hfill
\leq C\bigl(\|(\dT_0,\dv_0)\|_{\tilde B^{s-2,+}_{p,\eps\nu}}+\|\pa_t\dV\|_{L^1(\tilde B^{s-2,+}_{p,\eps\nu})+L^2(\tilde B^{s-3,+}_{p,\eps\nu})}
+ M_0^2\eps^{\f3p-s}+M_0\|\nabla\dV\|_{L^2(\dot B^{s-1}_{p,1})}
\bigr)}
$$
with $\dT:=\Theta^\eps-\Theta,$ $\dv:=\cP u^\eps-v,$  $\dV:=V^\eps-V$
and $C=C(\tilde\mu,\tilde\kappa,s,p).$
 \end{enumerate}
 \end{thm}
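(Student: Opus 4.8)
Item (1) is a direct application of Theorem \ref{th:main1}: the uniform smallness condition \eqref{eq:unif} ensures that \eqref{eq:smalldata1}--\eqref{eq:smalldata2} hold for every $\eps>0$ with the same $\eta$, so we get a global solution $(b^\eps,u^\eps,\theta^\eps)$ together with a bound by $KM_0$, \emph{uniform in $\eps$}, in all the norms of Theorem \ref{th:main1}. Item (2) follows from the now classical $\dot B^{\f12}_{2,1}$-critical theory for the incompressible Navier--Stokes equations with a forcing term: one runs a Friedrichs scheme (or a contraction argument) in $\tilde\cC(\dot B^{\f12}_{2,1})\cap L^1(\dot B^{\f52}_{2,1})$, using maximal $L^1_t(\dot B^{\f52}_{2,1})$ regularity for the heat semigroups of $v$ and $\Theta$, the product laws in Besov spaces to handle $v\cdot\na v$, $v\cdot\na\Theta$ and $\Theta\na V$, and the conditions $\na V\in L^2(\dot B^{\f12}_{2,1})$, $\pa_tV\in L^1(\dot B^{\f12}_{2,1})$ to control the right-hand sides; the global bound comes from the smallness of $\|(v_0,\Theta_0)\|_{\dot B^{\f12}_{2,1}}+\|\pa_tV\|_{L^1(\dot B^{\f12}_{2,1})}$.

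For item (3), I would start from the acoustic subsystem \eqref{eq:osc}. Writing $\cA u^\eps/(1+\eps a^\eps)=\cA u^\eps-\eps a^\eps\cA u^\eps/(1+\eps a^\eps)$ (and treating the factor $\eps a^\eps$ perturbatively, using that $\eps\|a^\eps\|$ is small), one sees that $(q^\eps,\cQ u^\eps)$ solves a \emph{damped acoustic system} of the form $\pa_t q^\eps+\tfrac{\sqrt2}\eps\div\cQ u^\eps=F^\eps$, $\pa_t\cQ u^\eps-\nu\Delta\cQ u^\eps+\tfrac{\sqrt2}\eps\na q^\eps=G^\eps$, whose sources $(F^\eps,G^\eps)$ are, by item (1), bounded by $KM_0$ (up to powers of $\nu$) in appropriate $\tilde L^2$-type hybrid Besov norms. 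The key is then a Strichartz estimate for this system, uniform in $\eps$ and $\nu$: after rescaling time by $\eps\nu$ one is reduced to the unit-speed damped wave group, for which dispersion provides, on the dyadic block of frequency $2^j$, a decay factor of order $(\eps\nu\,2^j)^{-1/2}$ in the relevant space-time norm; this is precisely what fixes the admissibility window $p\in[2,\infty]$, $s\in[-1/2+4/p,3/p]$, the low-frequency/high-frequency cut being placed at the scale $\eps\nu$. Summing the dyadic estimates with the weights defining the hybrid spaces and inserting the uniform bound on $(q^\eps_0,\cQ u^\eps_0,F^\eps,G^\eps)$ yields the stated inequality $\nu^{\f12}\|q^\eps\|_{\tilde L^2(\tilde B^{s-1,+}_{p,\eps\nu})}+\nu^{\f12}\|\cQ u^\eps\|_{\tilde L^2(\dot B^s_{p,1})}\le K(\eps\nu)^{\f3p-s}M_0$.

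For item (4), I would write down the system governing $\dT:=\Theta^\eps-\Theta$ and $\dv:=\cP u^\eps-v$ by subtracting \eqref{eq:BouS} from \eqref{eq:nonosc}. The only subtle term is the buoyancy force $\cP(\theta^\eps\na a^\eps)$: I use the identity \eqref{eq:relation} to replace $\sqrt2\,\cP(\theta^\eps\na a^\eps)$ by $\cP(\Theta^\eps\na V^\eps)+\cP(q^\eps\na V^\eps)+2\cP(q^\eps\na b^\eps)$, where the last two terms are $\cO((\eps\nu)^{\f3p-s})$ by item (3), and then split $\cP(\Theta^\eps\na V^\eps)-\cP(\Theta\na V)=\cP(\dT\,\na V^\eps)+\cP(\Theta\,\na\dV)$. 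All the remaining source terms of the $(\dT,\dv)$ system fall into three categories: (a) an explicit factor $\eps$ in front of a quantity bounded by $M_0^2$ (viscous heating, the terms $\eps a^\eps\cA u^\eps/(1+\eps a^\eps)$, and so on), hence $\cO(\eps)$; (b) a factor containing $q^\eps$ or $\cQ u^\eps$, hence $\cO((\eps\nu)^{\f3p-s})$; (c) terms linear in $(\dT,\dv)$ whose coefficients are bounded by $KM_0$ in a norm making the product absorbable, which is where the global smallness of $M_0$ enters. Applying maximal regularity for the heat flows of $\dT$ (conductivity $\kappa/2$) and $\dv$ (viscosity $\mu$) in the hybrid Besov spaces with threshold $\eps\nu$, estimating products by Bony's decomposition, and closing by a continuation/bootstrap argument, one obtains the claimed bound; the shift down to $\tilde B^{s-2,+}$-regularity is forced by $\dV$ being controlled only through $\|\na\dV\|_{L^2(\dot B^{s-1}_{p,1})}$ and $\|\pa_t\dV\|_{L^1(\tilde B^{s-2,+}_{p,\eps\nu})+L^2(\tilde B^{s-3,+}_{p,\eps\nu})}$, two derivatives below the natural level.

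\textbf{Where the difficulty lies.} The crux is the uniform-in-$(\eps,\nu)$ Strichartz estimate for the \emph{damped} acoustic system of item (3) in the hybrid Besov framework: one must simultaneously accommodate the parabolic smoothing $-\nu\Delta\cQ u^\eps$, treat the variable coefficient $1/(1+\eps a^\eps)$ perturbatively, and track the exact power $(\eps\nu)^{\f3p-s}$ while the frequency cut sits precisely at the scale $\eps\nu$. A secondary but still substantial point in item (4) is to check that \emph{every} source term of the $(\dT,\dv)$ system is genuinely $o(1)$ as $\eps\to0$: the product estimates must be carried out in the low-regularity hybrid norms without destroying the uniform bounds, and the final bootstrap relies on the global smallness supplied by Theorem \ref{th:main1}.
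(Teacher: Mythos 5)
Your treatment of items (1), (2) and (4) follows the paper's own route: (1) is immediate from Theorem \ref{th:main1}, (2) is the Duhamel/fixed-point argument based on Proposition \ref{p:heatestimates} and product laws, and in (4) you subtract the two systems, use \eqref{eq:relation} for the buoyancy term, sort the sources into $O(\eps)$ terms, terms carrying $q^\eps$ or $\cQ u^\eps$ (hence $O((\eps\nu)^{\f3p-s})$ by item (3)), and linear terms absorbable by smallness of $M_0$ — no bootstrap is in fact needed, direct absorption suffices since only small solutions are considered.

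The genuine gap is in item (3). You propose to keep $-\nu\Delta\cQ u^\eps$ in the linear propagator and derive the bound from a ``damped acoustic Strichartz estimate,'' asserting that a dyadic dispersive decay factor of order $(\eps\nu\,2^j)^{-1/2}$ is ``precisely what fixes'' the window $p\in[2,\infty]$, $s\in[-1/2+4/p,3/p]$ for the $\tilde L^2$-in-time norm. This cannot work as stated: in dimension $3$ the pair $(r,p)=(2,p)$ with $p<\infty$ is not wave-admissible ($1/r\le 1/2-1/p$ fails), so no dispersive estimate, damped or not, directly yields an $\tilde L^2_t(\dot B^s_{p,1})$ bound with a positive power of $\eps\nu$. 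What actually produces the stated range and rate is a two-step interpolation that your plan omits: the \emph{undamped} acoustic Strichartz estimate of Proposition \ref{p:strichartz} — with all of $\cA u^\eps/(1+\eps a^\eps)$ and $\kappa\Delta\theta^\eps/(1+\eps a^\eps)$ dumped into the source terms, which Theorem \ref{th:main1} controls — gives the $\tilde L^{\f{2p}{p-2}}(\tilde B^{\f2p-\f32,+}_{p,\eps\nu})$ bound \eqref{eq:comp1}; interpolating this against the $L^1(\tilde B^{\f32,+}_{\eps\nu})$ bound from the parabolic a priori estimates, and then against the $\tilde L^2(\tilde B^{\f12,+}_{\eps\nu})$ bound, yields $\tilde L^2$ in time exactly on $[-1/2+4/p,3/p]$, the rate $(\eps\nu)^{\f3p-s}$ degenerating to $1$ at the pure-energy endpoint $s=3/p$. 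Moreover, your choice of propagator creates a difficulty you do not address: for frequencies $2^j\gtrsim(\eps\nu)^{-1}$ the damped system has real spectrum and no oscillation at all, hence no dispersion, precisely where a uniform dyadic decay factor would be needed; the paper avoids this entirely by never putting the dissipation in the linear operator. Finally, note that $\kappa\Delta\theta^\eps/(1+\eps a^\eps)$ lies only in $L^1(\tilde B^{-\f12,+}_{\eps\nu})$ and not in $L^1(\dot B^{\f12}_{2,1})$, which is why the hybrid-space version of the Strichartz estimate (second inequality of Proposition \ref{p:strichartz}) is required; your sketch does not account for this loss of regularity in the source.
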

 \begin{rmk}\label{r:smoothbis} If the data are smoother, e.g. as in Remark \ref{r:smooth} then
 the results of convergence hold for stronger norms. For instance, it may be shown that
$(\cQ u^\eps,q^\eps)\to 0 \ \hbox{ in }\ \tilde L^2(\dot B^{\f4p-\f12}_{p,1}),$
that $\cP u^\eps\to v$ in $L^1(\dot B^{\f4p+\f12}_{p,1})\cap  \tilde L^\infty(\dot B^{\f4p-\f32}_{p,1}),$
and that  $\Theta^\eps\to \Theta$ in $ \tilde L^2(\dot B^{\f4p-\f12}_{p,1})\cap  \tilde L^\infty(\dot B^{\f4p-\f32}_{p,1}),$
 with the  decay rate $\eps^{\f12-\f1p}.$
\end{rmk}
Let us finally state our main global existence and convergence result
for nonconducting fluids.
\begin{thm}\label{th:main3}
Assume that the initial data $(a_0^\eps, u_0^\eps, \mathcal{R}_0^\eps)$ and the force term $V^\eps$ verify that
$$
\begin{array}{l}
C_0^\eps:=\|(a_0^\eps,u_0^\eps,\cR_0^\eps)\|_{\dot B^{\f12}_{2,1}}
\!+\!(\eps\nu)^3\|(a_0^\eps,\cR^\eps_0)\|_{\dot B^{\f72}_{2,1}}
+(\eps\nu)^2\|u_0^\eps\|_{\dot B^\f52_{2,1}}\\\hspace{7.8cm}
+ \|\pa_tV^\eps \|_{L^1(\dot B^\f12_{2,1})}\!+\!(\eps\nu)^3\|\pa_tV^\eps \|_{L^1(\dot B^\f72_{2,1})}\leq\eta\nu,\\[2ex]
\|V^\eps \|_{\tilde L^\infty(\dot B^\f12_{2,1})}\!+(\eps\nu)^3\|V^\eps \|_{\tilde L^\infty(\dot B^\f72_{2,1})}
  \!+\nu\|V^\eps \|_{L^1(\dot  B^\f52_{2,1})}  +\nu(\eps\nu)^2\|V^\eps \|_{L^1(\dot B^\f92_{2,1})}
\leq \eta\nu\end{array}
$$ where the constant $\eta$ is sufficiently small and depends
only on  $\tilde\mu.$
\smallbreak
Then System \eqref{eq:nhNS} admits a unique global  solution $(a^\eps,u^\eps,\mathcal{R}^\eps)$
which satisfies
$$
a^\eps\!\in\!\tilde C(\dot B^{\f12}_{2,1}\cap\dot B^{\f72}_{2,1}),\quad\!\!
u^\eps\!\in\!\tilde C(\dot B^{\f12}_{2,1}\cap\dot B^{\f52}_{2,1})\cap L^1(\dot B^{\f52}_{2,1}\cap\dot B^{\f92}_{2,1}),\quad\!\!
\cR^\eps\!\in\!\tilde C(\dot B^{\f12}_{2,1}\cap\dot B^{\f72}_{2,1})\cap L^1(\tilde B^{\f32,+}_{\eps\nu}\cap \tilde B^{\f72,+}_{\eps\nu})
$$
and, for some constant $K$ depending only on $\tilde\mu,$
\beno &&\|(a^\eps,\cR^\eps)\|_{\tilde L^\infty(\dot B^{\f12}_{2,1})}
+(\eps\nu)^3\|(a^\eps,\cR^\eps)\|_{\tilde L^\infty(\dot B^{\f72}_{2,1})}
+\|u^\eps\|_{\tilde L^\infty(\dot B^{\f12}_{2,1})}+(\eps\nu)^2\|u^\eps \|_{\tilde L^\infty(\dot B^{\f52}_{2,1})}
\\&&\quad
+\nu\|\cR^\eps\|_{L^1(\tilde B^{\f32,+}_{\eps\nu})}
+\nu(\eps\nu)^2\|\cR^\eps\|_{L^1(\tilde B^{\f72,+}_{\eps\nu})}
+\nu\|u^\eps\|_{L^1(\dot B^\f52_{2,1})}+\nu(\eps\nu)^2\|u^\eps \|_{L^1(\dot B^\f92_{2,1})}
\leq KC_0^\eps.
 \eeno

Suppose in addition that $\Theta^\eps_0\rightarrow \Theta_0,$ that $\cP u_0^\eps\rightarrow v_0$
and that $V^\eps\rightarrow V$ with
\begin{equation}\label{eq:smallnBouS}
\|v_0\|_{\dot B^{\f12}_{2,1}}+\|\nabla V\|_{L^1(\dot B^{\f32}_{2,1})}+\|\Theta_0\|_{\dot B^{\f12}_{2,1}}\leq \eta\mu.
\end{equation}
Then  the corresponding limit Boussinesq system  \eqref{eq:nBouS}
admits a unique global solution $(\Theta,v)$ in $\tilde C(\dot B^{\f12}_{2,1})\times\bigl( \tilde C(\dot B^{\f12}_{2,1})\cap L^1(\dot B^{\f52}_{2,1})\bigr).$
Furthermore we have
\begin{equation}\label{eq:nBouS1}
\|(\Theta,v)\|_{\tilde L^\infty(\dot B^{\f12}_{2,1})}+\mu\|v\|_{L^1(\dot B^{\f52}_{2,1})}\leq
K\|(\Theta_0,v_0)\|_{\dot B^{\f12}_{2,1}}.
\end{equation}
In addition, if $C_0^\eps$ is bounded by some constant $C_0$ when $\eps$ goes to $0$
then $(\cQ u^\eps,\cR^\eps)$ goes to zero with the following rates of convergence for all $p\in[2,\infty):$
\begin{align}\label{eq:stric0a}
\|(\cQ u^\eps,\cR^\eps)\|_{\tilde L^{\frac{2p}{p-2}}(\dot B^{\frac2p-\frac12}_{p,1})}&\leq KC_0\eps^{\frac12-\frac1p}\\
\nu^{\f12}\|(\cQ u^\eps,\cR^\eps)\|_{\tilde L^{2}(\dot B^{s}_{p,1})}&\leq
KC_0(\eps\nu)^{\f3p-s}\quad\hbox{ if }\
s\in[-1/2+4/p,3/p].
\end{align}
Finally, if $\Theta_0^\eps$ and $\cP u_0^\eps$ are independent\footnote{The reader may refer to Inequalities \eqref{eq:final1}
and \eqref{eq:final2} for the general case.} of $\eps$ then for all $p$ and $s$ as above (with in addition $s>1/2$), and $T>0,$
$$
\Theta^\eps-\Theta\rightarrow 0\ \hbox{ in }\ \tilde C_T(\dot B^{s-2}_{p,1})\quad\hbox{and}\quad
\cP u^\eps-v\rightarrow0\ \hbox{ in }\
\tilde C_T\bigl(\dot B^{s-1}_{p,1}+\dot B^{s-2}_{p,1}\bigr)\cap \bigl(\tilde L^2_T(\dot B^s_{p,1})+L^1_T(\dot B^s_{p,1})\bigr),
$$
and the rate of convergence is  $\eps^{\f3p-s}.$
\end{thm}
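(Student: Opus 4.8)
The plan is to adapt the architecture used for Theorems~\ref{th:main1} and~\ref{th:main2} to the non-conducting case, in which the only smoothing effect acts on the velocity.

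\textbf{Step 1: uniform global existence.} I would first establish a local existence and uniqueness statement for \eqref{eq:nhNS} in the stated space by the usual fixed-point scheme: since the density and pressure unknowns $a^\eps,\cR^\eps$ are merely transported, one propagates them at the high regularity $\dot B^{7/2}_{2,1}$ and the velocity at $\dot B^{1/2}_{2,1}\cap\dot B^{5/2}_{2,1}$, the composition factors $1/(1+\eps a^\eps)$ being controlled as long as $\|\eps a^\eps\|_{L^\infty}$ stays small. The heart of the matter is then a set of a priori estimates \emph{uniform in $\eps$}. Working on \eqref{eq:nhNSbis}, I would localize with $\ddj$ and combine: maximal parabolic estimates for $\cP u^\eps$ (operator $\mu\Delta$) and for the damped acoustic block $(\cQ u^\eps,\cR^\eps)$ (operator $\nu\Delta$ together with the skew term $\frac1\eps(\na,\div)$), which produce $L^1_t(\dot B^{s+2})$ gains without loss in $\eps$ thanks to the damping; transport estimates for $a^\eps,\cR^\eps$ and for $\Theta^\eps=a^\eps-\cR^\eps-V^\eps$ at the regularities $\dot B^{1/2}_{2,1}$ and $\dot B^{7/2}_{2,1}$, which cost a factor $\exp(C\int_0^t\|\na u^\eps\|_{\dot B^{3/2}_{2,1}})$; and the smallness of $C_0^\eps$ to absorb this exponential and all the quadratic and composition remainders. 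The weights $(\eps\nu)^2$ and $(\eps\nu)^3$ carried by the $\dot B^{5/2}_{2,1}$ and $\dot B^{7/2}_{2,1}$ norms are dictated by the scaling defect: they are exactly what balances the singular term $\na\cR^\eps/\eps$ against the derivative loss in the commutators $[\ddj,u^\eps\cdot\na]$ at top regularity. A continuation argument then turns the uniform bound into global existence. I expect this step to be the main obstacle: the absence of smoothing on $(a^\eps,\cR^\eps)$ forces one to track very carefully how the $1/\eps$ factors interact with the loss of one derivative in the transport estimates, and the correct bookkeeping of the $\eps\nu$-weighted norms is precisely what makes the whole estimate close.

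\textbf{Step 2: the limit Boussinesq system.} System \eqref{eq:nBouS} is a Navier--Stokes equation for $v$ (parabolic, operator $\mu\Delta$, source $\Theta\na V$) coupled to a pure transport equation for $\Theta$; I would propagate $\Theta\in\dot B^{1/2}_{2,1}$ by a transport estimate paying $\exp(C\|\na v\|_{L^1(\dot B^{3/2}_{2,1})})$, estimate $v$ by maximal regularity, and close \eqref{eq:nBouS1} globally using the smallness \eqref{eq:smallnBouS}; uniqueness follows from a standard stability estimate one regularity level below.

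\textbf{Step 3: decay of the oscillating components.} From \eqref{eq:nhNSbis}, the pair $(\cQ u^\eps,\cR^\eps)$ solves a wave system of speed $1/\eps$ damped by $\nu\Delta$, whose right-hand side is bounded uniformly in $\eps$ by Step~1. Feeding these uniform bounds into the Strichartz estimates for the damped acoustic system (established in the Appendix, in the spirit of \cite{D1}), which give $\tilde L^q_t(\dot B^s_{p,1})$ control with a gain of a power of $\eps\nu$, and interpolating between that gain and the uniform high-regularity bound, I obtain \eqref{eq:stric0a} and its $\tilde L^2$ companion; the low/high-frequency cut at threshold $\eps\nu$ together with the constraint $s\in[-1/2+4/p,3/p]$ is exactly what makes the exponent $3/p-s$ nonnegative.

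\textbf{Step 4: convergence of $(\cP u^\eps,\Theta^\eps)$.} Setting $\dv:=\cP u^\eps-v$, $\dT:=\Theta^\eps-\Theta$ and $\dV:=V^\eps-V$, and subtracting \eqref{eq:nBouS} from the $\cP u^\eps$ and $\Theta^\eps$ equations of \eqref{eq:nhNSbis} after using the identity $\cP(a^\eps\na(V^\eps+\cR^\eps))=\cP(\Theta^\eps\na V^\eps)+\cP(\Theta^\eps\na\cR^\eps)$, one finds that $(\dv,\dT)$ solves a transport--parabolic system whose sources split into: terms linear in the oscillating unknowns $(\cQ u^\eps,\cR^\eps)$, which are $O(\eps^{3/p-s})$ by Step~3; terms linear in $\dV$, controlled by the assumed convergence of $V^\eps$; and terms at least quadratic in $(\dv,\dT)$ plus $\eps$-small remainders. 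Propagating $\dT$ by a transport estimate and $\dv$ by parabolic estimates at the regularity $\dot B^{s-2}_{p,1}$ (the loss of two derivatives being the price of the rate $\eps^{3/p-s}$), and absorbing the quadratic terms through the smallness assumptions on $C_0^\eps$ and on the limit data, yields the claimed convergence; when $(\Theta_0^\eps,\cP u_0^\eps)$ and $V^\eps$ are independent of $\eps$ the data and $\dV$ contributions vanish and only the rate $\eps^{3/p-s}$ survives.
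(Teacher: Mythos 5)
Your plan follows essentially the same architecture as the paper: rescale to $\eps=\nu=1$ via \eqref{eq:changebis}, prove linear/paralinear estimates for the system \eqref{eq:lhNSbis0} (Proposition \ref{p:paralinear0}) at two regularity levels, close the nonlinear estimates by product laws and smallness, obtain the decay of $(\cQ u^\eps,\cR^\eps)$ from the acoustic Strichartz estimates of Proposition \ref{p:strichartz} interpolated against the uniform $L^1(\dot B^{5/2}_{2,1})$ bound, and finally run a coupled transport/parabolic Gronwall argument for $(\dT,\dv)$ on $[0,T]$.

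One point in Step 1 needs care. You describe $a^\eps$ and $\cR^\eps$ as ``merely transported'' and propose transport estimates for both, but their equations carry the singular term $\div u^\eps/\eps$ (i.e.\ $\Lambda d$ after rescaling), which cannot be absorbed as an $L^1(\R^+;\dot B^{1/2}_{2,1})$ source: globally in time, $\|d\|_{L^1(\dot B^{3/2}_{2,1})}$ is \emph{not} controlled by $\|d\|_{L^1(\dot B^{5/2}_{2,1})}\cap \tilde L^\infty(\dot B^{1/2}_{2,1})$, so a naive transport estimate on $a^\eps$ alone does not close. The paper's device is that $\cR$ belongs to the dissipative barotropic block (whence the $L^1(\tilde B^{3/2,+}_{\eps\nu}\cap\tilde B^{7/2,+}_{\eps\nu})$ gain that you also need to bound $\nabla\cR$ in $D$ and $W$), while $a_j-\cR_j$ is conserved by the linear flow, so the $L^\infty_t$ bound on $a$ is inherited from that on $\cR$. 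Your plan in fact contains the equivalent route — $\Theta^\eps=a^\eps-\cR^\eps-V^\eps$ obeys a genuine (non-singular) transport equation, so $a^\eps=\Theta^\eps+\cR^\eps+V^\eps$ is bounded — but you should make this the mechanism for controlling $a^\eps$, rather than a direct transport estimate on $a^\eps$ itself. With that adjustment the remaining steps (limit system, Strichartz decay, local-in-time convergence of the incompressible modes with the sum-space bookkeeping for $\dv$) match the paper's proof.
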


The above statements deserve some comments:
\begin{enumerate}
\item  In this paper, for simplicity, we focussed on the physical dimension $3.$ However
similar statements may be established in any dimension $d\geq2.$
\item In the case of large data, we expect, as for the isentropic Navier-Stokes equations studied in \cite{D3},
the lifespan of the solutions to \eqref{eq:hNS} to tend to that of the limit Oberbeck-Boussinesq equations.
Global existence for the limit equations should entail  global existence
for \eqref{eq:hNS} with small $\eps,$ if $\kappa>0.$ This is of particular interest in dimension two, as the limit equations
are globally well-posed for any data with the above smoothness.
We reserve this study to future works.
 \item  We also reserve the case of other boundary conditions, in particular the periodic ones, to future
works. We want to point out that the global existence statements (that is Theorem \ref{th:main1}
 as well as the first part of Theorem
\ref{th:main3}) extend to that case. At the same time, no dispersive inequalities
are available, hence  the approach
for proving convergence is expected to be completely different,
provided based  on the filtering method, as in the isentropic case \cite{D4}.
\end{enumerate}

We end this section by explaining the general strategy for the proof of convergence.
The first step consists in proving   uniform global a priori estimates. This in fact corresponds to
the statement of Theorem \ref{th:main1} and to the first part of Theorem \ref{th:main3}.
We shall see that the proof reduces  to the case $\eps=1$ after suitable rescaling of the equations.
Then, proving convergence requires two steps : first we
establish that the oscillating part of the solution converges to $0$ (this relies on Strichartz estimates),
and next  establish strong convergence to Oberbeck-Boussinesq for the incompressible modes.
Note that, owing to the fact that only small solutions are considered, we do not need to resort to
bootstrap arguments.


\section{Global existence and convergence in the case $\kappa>0$}\label{s:kappa}

Let us first notice that performing the change of unknown\footnote{Recall that $\nu=\lambda+2\mu$}:
\begin{equation}\label{eq:change}
(b,u,\theta)(t,x):=\eps (b^\eps,u^\eps,\theta^\eps)(\eps^2\nu t,\eps\nu x)
\end{equation}
and the change of data
\begin{equation}\label{eq:changedata}
(b_0,u_0,\theta_0)(x):= \eps(b^\eps_0,u^\eps_0,\theta^\eps_0)(\eps\nu x)
\quad\hbox{and}\quad
\widetilde{V}(t,x):=\eps V^\eps(\eps^2\nu t,\eps\nu x)
 \end{equation}
reduces the study to the case $\nu=1$ and $\eps=1.$
Indeed it is obvious that $(b^\eps,u^\eps,\theta^\eps)$ satisfies \eqref{eq:hNS}
  if and only if $(b,u,\theta)$ satisfies the same system with $\eps=1,$
  Lam\'e  coefficients
 $(\tilde\lambda,\tilde\mu):=\nu^{-1}(\lambda,\mu)$ and
 heat conductivity $\tilde\kappa:=\nu^{-1}\kappa,$
 provided the data have been changed according to \eqref{eq:changedata}.
 This change of variables has the desired effect
 on the norms that are used in Theorem \ref{th:main1}. For example,
  we have, up to a constant independent of $\eps$ and $\nu,$
 $$\begin{array}{l}
  \|b_0\|_{\tilde B^{\f32,-}_1}=\nu^{-1}\|b^\eps_0\|_{\tilde B^{\f32,-}_{\eps\nu}},
  \quad   \|u_0\|_{\dot B^{\f12}_{2,1}}=\nu^{-1}\|u_0^\eps\|_{\dot B^{\f12}_{2,1}},
  \quad  \|\theta_0\|_{\tilde B^{-\f12,+}_1}=\nu^{-1}\|\theta^\eps_0\|_{\tilde B^{-\f12,+}_{\eps\nu}},\\[2ex]
 \|\nabla \widetilde{V}(t,\cdot)\|_{\tilde B^{\f32,-}_1}=\eps\|\nabla V^\eps(\eps^2\nu t,\cdot)\|_{\tilde B^{\f32,-}_{\eps\nu}}
 \quad\hbox{and}\quad\|\pa_t\widetilde{V}(t,\cdot)\|_{\tilde B^{\f32,-}_{1}}
= \eps^2\|\pa_tV^\eps(\eps^2\nu t,\cdot)\|_{\tilde B^{\f32,-}_{\eps\nu}},\end{array}
 $$
 hence
 $$
  \|\nabla \widetilde{V}\|_{L^2(\tilde B^{\f32,-}_1)}=\nu^{-\f12}\|\nabla V^\eps\|_{L^2(\tilde B^{\f32,-}_{\eps\nu})}
  \quad\hbox{and}\quad
   \|\pa_t \widetilde{V}\|_{L^1(\tilde B^{\f32,-}_1)}=\nu^{-1}\|\pa_t V^\eps\|_{L^1(\tilde B^{\f32,-}_{\eps\nu})} .
 $$
 Consequently, in order to prove  Theorem \ref{th:main1}, it is suffices to consider the case $\nu=1$ and
  $\eps=1.$  We shall resume to the original variables only at the end of this section, for getting the convergence results
  of Theorem \ref{th:main2}.


\subsection{The linearized system}

In the case $\eps=\nu=1,$ the linearized equations about $(0,0,0)$ read
 \begin{equation}
  \left\{
    \begin{aligned}
      & \pa_t b+\div u=0, \\
      &\pa_t u - \tilde\mu\Delta u-(\tilde\lambda+\tilde\mu)\nabla\div u+\nabla(b+\theta)  =0,\\
      &\pa_t \theta  +\div u-\tilde\kappa \Delta \theta=0.
         \end{aligned}
  \right.
  \label{eq:linearized}
  \end{equation}
  We aim at proving energy type estimates for $(b,u,\theta).$
Roughly speaking, we shall exhibit a low frequency parabolic type smoothing for all the components
of the solution whereas, in high frequency,  only $(u,\theta)$  will  experience a parabolic smoothing.
As for  $b,$ it will be  damped with no gain of regularity whatsoever.
 Throughout our proof (which will require several steps) we shall
also pinpoint  where  one has to work in different level of regularities to get the aforementioned
features of the system.

Let us first notice that  the gradient terms in the velocity equation involve only the potential part of the velocity.
    More precisely, setting
 $d:=\Lambda^{-1}\div u$ (with  $\Lambda^s:=|D|^s$)  and $w:=\cP u=u+\na(-\Delta^{-1})\div u,$ we get
 \begin{equation}
  \left\{
    \begin{aligned}
      & \pa_t b+\Lambda d=0, \\
      &\pa_t d - \Delta d-\Lambda(b+\theta)  =0,\\
      &\pa_t \theta  +\Lambda d -\tilde\kappa \Delta \theta=0,\\
      &\pa_tw-\tilde\mu\Delta w=0.
         \end{aligned}
  \right.
  \label{eq:linearizedbis}
  \end{equation}
 As the last equation is the standard heat equation with constant diffusion, we focus on the proof  of
 estimates for the first three equations.
 After localization by means of the homogeneous Littlewood-Paley decomposition $(\ddj)_{j\in\Z},$ the obtained system reads
   \begin{equation}
  \left\{
    \begin{aligned}
      & \pa_t b_j+\Lambda d_j=0, \\
      &\pa_t d_j - \Delta d_j-\Lambda(b_j+\theta_j)  =0,\\
      &\pa_t \theta_j +\Lambda d_j -\tilde\kappa \Delta \theta_j=0
         \end{aligned}
  \right.
  \label{eq:bdtheta}
  \end{equation}
with $b_j:=\ddj b,$ $d_j:=\ddj d$ and $\theta_j:=\ddj\theta.$

 \subsubsection*{Step 1: Basic Energy Estimate for $(b, d, \theta)$}
 Owing to the antisymmetric structure of the first order terms in \eqref{eq:bdtheta}, we readily get
 \begin{equation}\label{be1}
\f12\f{d}{dt}\big[\|b_j\|_{L^2}^2+\|d_j\|_{L^2}^2+\|\theta_j\|_{L^2}^2\big]+\|\Lambda d_j\|_{L^2}^2+\tilde\kappa\|
\Lambda \theta_j\|_{L^2}^2=0.
\end{equation}

 \subsubsection*{Step 2: Improved Energy Estimate for $(b, d, \theta)$}

 We want to track the decay  properties of $b.$ For that we notice that the auxiliary function $\Lambda b-d$ satisfies:
 $$\pa_t[\Lambda b_j-d_j] +\Lambda(b_j+\theta_j)=0.$$
 Hence  taking the $L^2$ inner product with $\Lambda b_j-d_j$ yields
  $$\displaylines{\f12\f{d}{dt}\|\Lambda b_j-d_j \|_{L^2}^2
  +\|\Lambda b_j\|_{L^2}^2+\bigl(\Lambda \theta_j|\Lambda b_j\bigr)_{L^2}-\bigl((b_j+\theta_j)| \Lambda d_j\bigr)_{L^2}=0,}
  $$
from which we deduce that
 \begin{equation}\label{be3}   \f12\f{d}{dt}\bigg[\|\Lambda b_j-d_j \|_{L^2}^2+\|b_j\|_{L^2}^2 +\|\theta_j\|_{L^2}^2
  \bigg]+\|\Lambda b_j\|_{L^2}^2+\big(\Lambda \theta_j|\Lambda b_j \big)_{L^2}
  +\tilde\kappa \|\Lambda \theta_j  \|_{L^2}^2=0. \end{equation}

 Putting  \eqref{be1} and \eqref{be3} together, we thus get for any $\alpha\geq0,$
  \begin{multline}\label{be2}
  \f12\f{d}{dt}\bigg[\alpha\|d_j\|_{L^2}^2+\|\Lambda b_j-d_j] \|_{L^2}^2+(1+\alpha)\|b_j\|_{L^2}^2
  +(1+\alpha)\|\theta_j\|_{L^2}^2\biggr]\\+\|\Lambda b_j\|_{L^2}^2+\big(\Lambda \theta_j|\Lambda b_j \big)_{L^2}
  +\tilde\kappa (1+\alpha) \|\Lambda \theta_j  \|_{L^2}^2+\alpha\|\Lambda d_j\|_{L^2}^2=0.
  \end{multline}
  Let us denote
\begin{eqnarray}\label{eq:fj}
&& f_j^2:= \alpha \|d_j\|_{L^2}^2+(1+\alpha)\|b_j\|_{L^2}^2
 +\|\Lambda b_j-d_j \|_{L^2}^2  +(1+\alpha)\|\theta_j\|_{L^2}^2,\\\label{eq:Hj}
 &&H_j^2:=\frac12\|\Lambda b_j\|_{L^2}^2+\alpha\|\Lambda d_j\|_{L^2}^2
  +\biggl(\tilde\kappa(1+\alpha)-\frac12\biggr) \|\Lambda \theta_j  \|_{L^2}^2.
  \end{eqnarray}
 Then combining   \eqref{be2} with the following Young inequality:
 $$
 \big|\big(\Lambda \theta_j|\Lambda b_j \big)_{L^2}\bigr|\leq\frac{1}{2}\|\Lambda\theta_j\|_{L^2}^2+\frac12\|\Lambda b_j\|_{L^2}^2,
 $$
 implies that
\begin{equation}\label{be4}
\f12\f{d}{dt}f_j^2+ H_j^2\leq 0.
 \end{equation}
 Let us notice that
 $$
 f_j^2=(\alpha+1)\|(b_j,d_j,\theta_j)\|_{L^2}^2+\|\Lambda b_j\|_{L^2}^2-2(\Lambda b_j|d_j)_{L^2}.
 $$
 Therefore, because
 $$
 2|(\Lambda b_j|d_j)_{L^2}|\leq\frac23\|\Lambda b_j\|_{L^2}^2+\frac32\|d_j\|_{L^2}^2,
 $$
  we have
 \begin{equation}\label{eq:equivfj}
 \Bigl(\alpha-\frac12\Bigr)\|d_j\|_{L^2}^2+\frac13\|\Lambda b_j\|_{L^2}^2
 \leq f_j^2-(\alpha+1)\|(b_j,\theta_j)\|_{L^2}^2\leq  \Bigl(\alpha+\frac52\Bigr)\|d_j\|_{L^2}^2+\frac53\|\Lambda b_j\|_{L^2}^2.
 \end{equation}
 Let us first assume that $\tilde\kappa\leq1.$ Then
we take $\alpha=2/\tilde\kappa-1$ and \eqref{eq:equivfj} thus implies that
$$
f_j^2\approx\left\{\begin{array}{lll} \tilde\kappa^{-1}\|(b_j,d_j,\theta_j)\|_{L^2}^2&\hbox{ if }&\tilde\kappa2^{2j}\leq1,\\
 \tilde\kappa^{-1}\|(d_j,\theta_j)\|_{L^2}^2+\|\Lambda b_j\|_{L^2}^2&\hbox{ if }&\tilde\kappa2^{2j}\geq1.\end{array}\right.
 $$
At the same time, we have
$$
H_j^2\gtrsim \left\{\begin{array}{lll} 2^{2j}\|(b_j,d_j,\theta_j)\|_{L^2}^2&\hbox{ if }&\tilde\kappa2^{2j}\leq1,\\
 \tilde\kappa^{-1}\|(d_j,\theta_j)\|_{L^2}^2+\|\Lambda b_j\|_{L^2}^2&\hbox{ if }&\tilde\kappa2^{2j}\geq1.\end{array}\right.
 $$
Therefore, one may easily conclude that for some (universal) constant $c\in(0,1],$
\begin{eqnarray}\label{be5l}
\|(b_j,d_j,\theta_j)(t)\|_{L^2}\lesssim e^{-c\tilde\kappa2^{2j}t}\|(b_j,d_j,\theta_j)(0)\|_{L^2}\quad\hbox{if }\ 2^{2j}\tilde\kappa\leq 1,\\
\label{be5h}
\|(\tilde\kappa\Lambda b_j,d_j,\theta_j)(t)\|_{L^2}\lesssim e^{-ct}\|(\tilde\kappa\Lambda b_j,d_j,\theta_j)(0)\|_{L^2}
\quad\hbox{if }\ 2^{2j}\tilde\kappa\geq 1.
\end{eqnarray}
Let us now assume that $\tilde\kappa\geq1.$ Then we take $\alpha=1$ so that  following the above computations after replacing
everywhere $\tilde\kappa$ by $1,$ it is easy to conclude that
\begin{eqnarray}\label{be6l}
\|(b_j,d_j,\theta_j)(t)\|_{L^2}\lesssim e^{-c2^{2j}t}\|(b_j,d_j,\theta_j)(0)\|_{L^2}\quad\hbox{if }\  j\leq 0,\\
\label{be6h}
\|(\Lambda b_j,d_j,\theta_j)(t)\|_{L^2}\lesssim e^{-ct}\|(\Lambda b_j,d_j,\theta_j)(0)\|_{L^2}
\quad\hbox{if }\  j\geq0.
\end{eqnarray}
Therefore, denoting $\check\kappa =\min(1,\tilde\kappa)$ and
putting together \eqref{be5l}, \eqref{be5h}, \eqref{be6l} and \eqref{be6h}, we end up with
\begin{equation}\label{be6}\begin{array}{lll}
\|(b_j,d_j,\theta_j)(t)\|_{L^2}\lesssim e^{-c\check\kappa2^{2j}t}\|(b_j,d_j,\theta_j)(0)\|_{L^2} &\!\!\hbox{if}\!\!& 2^{2j}\check\kappa\leq 1,\\[2ex]
\|\Lambda b_j(t)\|_{L^2}+\check\kappa^{-1}\|(d_j,\theta_j)(t)\|_{L^2}
\lesssim e^{-ct}\bigl(\|\Lambda b_j(0)\|_{L^2}\!+\!\|\check\kappa^{-1}(d_j,\theta_j)(0)\|_{L^2}\bigr)
&\!\!\hbox{if}\!\!& 2^{2j}\check\kappa\geq 1.
\end{array}\!\!\!\!\!\!\!\!
\end{equation}
For reasons that will appear more clearly in the following steps, it is suitable to work \emph{with one less derivative}
in the high frequency regime. Now from the second inequality of \eqref{be6} and
Bernstein inequality, we get for $2^{2j}\check\kappa\geq1,$
\begin{equation}\label{be6g}
\|\check\kappa b_j(t)\|_{L^2}\!+\!\|\Lambda^{-1}(d_j,\theta_j)(t)\|_{L^2}
\lesssim e^{-ct}\bigl(\|\check\kappa b_j(0)\|_{L^2}\!+\!\|\Lambda^{-1}(d_j,\theta_j)(0)\|_{L^2}\bigr).
\end{equation}

\subsubsection*{Step 3: Parabolic smoothing for $\theta$. }
We here aim at tracking the high-frequency parabolic smoothing for $\theta.$  For that, we
rewrite the last two equations of \eqref{eq:bdtheta} as follows
$$
\left\{\begin{array}{l}
\pa_t\Lambda^{-1}d_j-\Delta(\Lambda^{-1}d_j)-\theta_j=b_j,\\[1ex]
\pa_t\Lambda^{-1}\theta_j-\tilde\kappa\Delta(\Lambda^{-1}\theta_j)+d_j=0.
\end{array}\right.
$$
Then applying a direct energy method,  we readily get
$$
 \f12\f{d}{dt}\bigg(\|\Lambda^{-1} d_j\|_{L^2}^2 + \|\Lambda^{-1}\theta_j\|_{L^2}^2 \bigg)+\|d_j\|_{L^2}^2 +\tilde\kappa\|\theta_j\|_{L^2}^2
 =(b_j|\Lambda^{-1} d_j).
 $$
 Therefore, performing a time integration yields
 $$
 \|\Lambda^{-1}(d_j,\theta_j)(t)\|_{L^2}+c\check\kappa\int_0^t \|\Lambda(d_j,\theta_j)\|_{L^2}\,d\tau
 \leq  \|\Lambda^{-1}(d_j,\theta_j)(0)\|_{L^2}+\int_0^t\|b_j\|_{L^2}\,d\tau,
 $$
 and taking advantage of the second inequality of \eqref{be6} eventually leads to
 \begin{equation}
  \label{be7}
   \|\Lambda^{-1}(d_j,\theta_j)(t)\|_{L^2}+\check\kappa\int_0^t \|\Lambda(d_j,\theta_j)\|_{L^2}\,d\tau
 \lesssim      \|b_j(0)\|_{L^2}+ \check\kappa^{-1}\|\Lambda^{-1}(d_j,\theta_j)(0)\|_{L^2}
   \end{equation}
in the high frequency regime, that is whenever $2^{j}\sqrt{\check\kappa}\geq1.$

 \subsubsection*{Step 4: Parabolic smoothing  for $d$. }

Given that
$$
\pa_td_j-\Delta d_j=\Lambda(b_j+\theta_j),
$$
one may write that
$$
\|d_j(t)\|_{L^2}+c2^{2j}\int_0^t\|d_j\|_{L^2}\,d\tau\leq \|d_j(0)\|_{L^2}+\int_0^t\|\Lambda(b_j,\theta_j)\|_{L^2}\,d\tau.
$$
The previous steps ensure that, for $2^j\sqrt{\check\kappa}\geq1,$
$$
\begin{array}{lll}
\displaystyle\int_0^t\|\Lambda b_j\|_{L^2}\,d\tau&\lesssim& \|\Lambda b_j(0)\|_{L^2}+\check\kappa^{-1}\|(d_j,\theta_j)(0)\|_{L^2},\\[1ex]
\Int_0^t\|\Lambda \theta_j\|_{L^2}\,d\tau&\lesssim& \check\kappa^{-1}\|b_j(0)\|_{L^2}+\check\kappa^{-2}\|\Lambda^{-1}(d_j,\theta_j)(0)\|_{L^2}.
\end{array}
$$
Therefore we have
    \begin{multline}
    2^{2j}\!\int_0^t\!\|d_j\|_{L^2}\,d\tau
    \lesssim    \|\Lambda b_j(0)\|_{L^2} +\check\kappa^{-1}\|b_j(0)\|_{L^2}\\+ \check\kappa^{-2} \|\Lambda^{-1}(d_j,\theta_j)(0)\|_{L^2}
  +\check\kappa^{-1}\|(d_j,\theta_j)(0)\|_{L^2}.\!
   \label{be8}\end{multline}

\subsubsection*{Step 5: Final a priori estimate for $(b,u,\theta)$}

Putting together inequalities \eqref{be6}, \eqref{be7} and \eqref{be8} and using  the standard properties of the
heat equation (as regards $w$), we get if $j\leq0$:
\begin{equation}\label{eq:estlow}
\|(b_j,u_j,\theta_j)(t)\|_{L^2}+2^{2j}\Int_0^t\|(b_j,u_j,\theta_j)\|_{L^2}\,d\tau
\leq C\|(b_j,u_j,\theta_j)(0)\|_{L^2},
\end{equation}
and, if $j\geq0$:
\begin{equation}\label{eq:esthigh}
\|(2^jb_j,u_j,2^{-j}\theta_j)(t)\|_{L^2}+\Int_0^t\|(2^{j}b_j,2^{2j}u_j,2^j\theta_j)\|_{L^2}\,d\tau
\leq C\|(2^jb_j,u_j,2^{-j}\theta_j)(0)\|_{L^2}.
\end{equation}
The above constant $C$ depends only on $\check\mu$ and $\check\kappa.$


\subsection{A priori estimates for the paralinearized system}

As pointed out in the previous subsection (see in particular \eqref{eq:esthigh}),
there is no gain of regularity for $b$ throughout the evolution (only damping in fact).
Therefore, the convection term $v\cdot\nabla b$ cannot
just be considered as a source term, tractable by Duhamel formula,
for the presence of $\nabla b$ will induce a loss of one derivative in the estimates.

At the same time, at the level of $L^2$ estimates, this convection term is
rather harmless provided $\div v$ is in $L^1(\R^+;L^\infty)$
(it is only a matter of integrating by parts).
The natural idea is thus to keep the convection terms in the linearized equations\footnote{We
keep \emph{all} the terms just for questions of symmetry, but only $v\cdot\nabla b$ may cause a loss of derivative.}
and to resume to  the method of the previous
paragraph. As however the Littlewood-Paley localization operator $\ddj$
\emph{does not} commute with the material derivative $(\pa_t+v\cdot\nabla),$
it is convenient to keep only the `bad' part of the convection term, that
is the one which does induce a loss of one derivative.
In order to better explain what we mean, we have to give a short
presentation of Bony's decomposition (first introduced in \cite{Chon})
and paraproduct calculus.
The paraproduct is the bilinear operator
defined on the set of couples  of tempered distributions, by
$$
T_f g :=\sum_{j}\dot S_{j-1}f\, \ddj g\quad\hbox{with} \quad \dot S_{j-1}:=\chi(2^{-(j-1)}D).
$$
The (formal) Bony decomposition of the product $fg$ reads
$$
fg=T_fg+T'_gf.
$$
The basic idea is that the term $T_fg$ is always defined but cannot be more regular than $g,$
and that under suitable assumptions  the other term $T'_gf$ is more regular.
If we look at the convection term, the `bad' part that may
cause a loss of one derivative and has to be included in the linear analysis
is thus (with the summation convention over repeated indices)  $T_{u^k}\pa_k b.$
This motivates us to extend the analysis of the previous subsection
to the following `paralinearized' system:
 \begin{equation}
  \left\{
    \begin{aligned}
      & \pa_t b+\Lambda d+T_{v^k}\pa_k b=B, \\
      &\pa_t d + T_{v^k}\pa_k d- \Delta d-\Lambda(b+\theta)  =D,\\
      &\pa_t \theta  +\Lambda d +T_{v^k}\pa_k \theta-\tilde\kappa \Delta \theta=G,\\
      &\pa_t w + T_{v^k}\pa_k w-\tilde\mu\Delta w=W,
    \end{aligned}
  \right.
  \label{eq:lhNS}
  \end{equation}
where the source terms $B,$ $D,$ $G,$ $W$ and the vector field $v$ are given.
\begin{prop}\label{p:paralinear}
Let  $\cV(t):=\int_0^t \|\na v\|_{L^\infty}\,d\tau$. For all $s\in\R,$ there exists a constant $K$ depending only on
$\tilde\mu,$ $\check\kappa,$
and a universal constant $C$  such that  the following inequality holds true:
$$\displaylines{
\|b\|_{\tilde L^\infty_t(\tilde B^{s+1,-}_1)}+\|(d,w)\|_{\tilde L^\infty_t(\dot B^s_{2,1})}+\|\theta\|_{\tilde L^\infty_t(\tilde B^{s-1,+}_1)}
+\int_0^t\bigl(\|b\|_{\tilde B^{s+1,+}_1}+\|(d,w)\|_{\dot B^{s+2}_{2,1}}+\|\theta\|_{\tilde B^{s+1,+}_1}\bigr)\,d\tau
 \hfill\cr\hfill\leq Ke^{C\cV(t)}\biggl(\|b_0\|_{\tilde B^{s+1,-}_1}+\|(d_0,w_0)\|_{\dot B^s_{2,1}}+\|\theta_0\|_{\tilde B^{s-1,+}_1}
\hfill\cr\hfill +\int_0^te^{-C\cV(\tau)}\bigl(\|B\|_{\tilde B^{s+1,-}_1}+\|(D,W)\|_{\dot B^s_{2,1}}+\|G\|_{\tilde B^{s-1,+}_1}\bigr)\,d\tau\biggr)\cdotp}
 $$
\end{prop}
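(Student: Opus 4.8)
The plan is to mimic the five-step energy analysis of the previous subsection, but now carried out on the Littlewood--Paley blocks $b_j,d_j,\theta_j,w_j$ with the paraconvection term $T_{v^k}\pa_k\cdot$ retained. First I would apply $\ddj$ to \eqref{eq:lhNS}. The crucial point is that $\ddj$ does \emph{not} commute with $T_{v^k}\pa_k$, so one writes
$$
\ddj(T_{v^k}\pa_k z)=T_{v^k}\pa_k z_j+R_j^z,\qquad z_j:=\ddj z,
$$
where the commutator $R_j^z=[\ddj,T_{v^k}]\pa_k z$ obeys the classical estimate (see \cite{BCD})
$$
2^{js}\|R_j^z\|_{L^2}\leq C c_j\|\na v\|_{L^\infty}\|z\|_{\dot B^s_{2,1}},\qquad (c_j)\in\ell^1,\ \Sum c_j=1.
$$
Thus at the level of each block we have exactly the linear system \eqref{eq:bdtheta} (plus the heat equation for $w_j$), perturbed by the \emph{transport} operator $T_{v^k}\pa_k$ and by source terms $B_j+R_j^b$, $D_j+R_j^d$, $G_j+R_j^\theta$, $W_j+R_j^w$. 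Now I redo Steps~1--5 verbatim: Step~1 is the basic energy identity for $(b_j,d_j,\theta_j)$; Step~2 introduces the auxiliary quantity $\Lambda b_j-d_j$ to recover low-frequency parabolic smoothing of $b_j$ and high-frequency damping; Steps~3--4 give the high-frequency parabolic smoothing for $\theta_j$ and $d_j$; the heat equation handles $w_j$. The only new feature is that each time we integrate by parts in the term $\bigl(T_{v^k}\pa_k z_j\,\big|\,\text{(something)}\bigr)_{L^2}$: by the standard paraproduct/commutator bound this produces a contribution controlled by $\|\na v\|_{L^\infty}\|z_j\|_{L^2}\cdot(\text{norms of the other unknowns})$, i.e. an absorbable term of the form $\|\na v\|_{L^\infty}\,f_j(t)^2$ with $f_j$ as in \eqref{eq:fj}. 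Hence \eqref{be4} becomes
$$
\f12\f{d}{dt}f_j^2+H_j^2\leq C\|\na v\|_{L^\infty}f_j^2+f_j\bigl(\|B_j+R_j^b\|_{L^2}+\|D_j+R_j^d\|_{L^2}+\|G_j+R_j^\theta\|_{L^2}\bigr),
$$
and similarly the Duhamel-type inequalities in Steps~3--4 pick up the same harmless factor $e^{C\cV(t)}$.

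The next step is to convert these blockwise estimates into the hybrid-Besov estimates of the statement. Recall from Section~\ref{s:results} that $\|z\|_{\tilde B^{s+1,-}_1}=\|z^\ell\|_{\dot B^{s}_{2,1}}+\|z^h\|_{\dot B^{s+1}_{2,1}}$ and $\|z\|_{\tilde B^{s-1,+}_1}=\|z^\ell\|_{\dot B^{s}_{2,1}}+\|z^h\|_{\dot B^{s-1}_{2,1}}$ (with threshold $\alpha\sim1$, since $\check\kappa$ is fixed). So I multiply the low-frequency estimate \eqref{eq:estlow} by $2^{js}$ and the high-frequency estimate \eqref{eq:esthigh}, suitably weighted, by $2^{js}$, then sum over $j\in\Z$. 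Using Gr\"onwall on each block (which is where the prefactor $e^{C\cV(t)}$ and the weighted-in-time source integral $\int_0^t e^{-C\cV(\tau)}(\cdots)\,d\tau$ appear), and then summing the $\ell^1$ sequences $(c_j)$ coming from the commutator terms — which reproduces $\|\na v\|_{L^\infty}$ times the Besov norms of $(b,d,\theta,w)$ and can be absorbed into the $e^{C\cV(t)}$ factor by a standard Gr\"onwall argument on the \emph{sum} — yields precisely the claimed inequality. One only has to be a little careful that the commutator term $R_j^b$ involves $\na b$, which by the no-gain-of-regularity feature of $b$ cannot be estimated in $\dot B^{s+1}_{2,1}$; but the commutator bound above only costs $\|b\|_{\dot B^{s}_{2,1}}$ (resp. the hybrid norm $\|b\|_{\tilde B^{s+1,-}_1}$, since $s$ there plays the role of the \emph{low-frequency} exponent), which is exactly the regularity we have, so this closes.

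The main obstacle — and the reason the proposition is stated the way it is — is the combination of two difficulties: (i) the unknown $b$ genuinely loses one derivative under plain convection, which forces keeping $T_{v^k}\pa_k b$ inside the linear part rather than treating it by Duhamel, and (ii) the linear part itself has the non-standard feature that $b$ is only damped (no smoothing) in high frequencies while $(d,\theta,w)$ are parabolically smoothed, and moreover $\theta$ must be handled \emph{one derivative below} its natural level in high frequencies (cf.\ \eqref{be6g} and Step~3). Getting the bookkeeping of these different regularity levels consistent with Bony's decomposition — making sure the commutator $[\ddj,T_{v^k}]\pa_k$ acting on $b$ only ever costs the \emph{damped} norm of $b$, never a smoothed one — is the delicate part; the energy estimates themselves are then routine. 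Everything else (summation of the Littlewood--Paley pieces, Gr\"onwall, handling of $w$ via the heat semigroup) is standard and I would not belabour it.
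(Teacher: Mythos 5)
Your proposal is correct and follows essentially the same route as the paper: keep the paraconvection term in the blockwise energy analysis, control it by the standard $[\ddj,T_{v^k}]\pa_k$ commutator estimate plus an integration by parts (costing only $\|\na v\|_{L^\infty}$ times the energy $f_j$ at the correct, damped regularity level for $b$), rerun Steps 1--5, then sum over $j$ and apply Gr\"onwall. You also correctly isolate the one genuinely delicate point, namely that the cross term in the $\Lambda b_j-d_j$ estimate requires commuting $\Lambda$ through the paraproduct while only ever paying the $\dot B^{s+1}_{2,1}$ (damped) norm of $b$ in high frequencies, which is exactly how the paper handles it.
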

\begin{proof}
Compared to the study of the previous subsection,
the main additional difficulty lies in the
paraconvection terms. Indeed, the source terms may be easily dealt with by means of the Duhamel formula.

The paraconvection terms may be handled thanks to the following  inequality:
\begin{equation}\label{eq:paraconv}
\bigl|\bigl(\phi(2^{-j}D)(T_{v^k}\pa_k z)|\phi(2^{-j}D)z\bigr)_{L^2}\bigr|\leq C\|\nabla v\|_{L^\infty}
\|\phi(2^{-j}D)z\|_{L^2}\sum_{|j'-j|\leq N}\|\phi(2^{-j'}D)z\|_{L^2}
\end{equation}
which holds true for any smooth function $\phi$ with compact support away from the origin and large enough integer $N$
depending only on $\Supp\phi$ and $\Supp\varphi.$
\smallbreak
Let us justify \eqref{eq:paraconv}.
We fix some integer $N$ so that
$$\Supp \phi(2^{-j}\cdot)\cap \Supp \bigl(\chi(2^{-j'}\cdot)*\varphi(2^{-j'}\cdot)\bigr)=\emptyset
\quad\hbox{whenever }\  |j-j'|>N.$$
Then we use the following algebraic identity:
$$
\begin{array}{lll}
\bigl(\phi(2^{-j}D)(T_{v^k}\pa_k z)|\phi(2^{-j}D)z\bigr)_{L^2}&\!\!\!\!=\!\!\!\!&
\Sum_{|j'-j|\leq N} \bigl(\phi(2^{-j}D)(\dot S_{j'-1}v^k\pa_k\dot\Delta_{j'}z)|\phi(2^{-j}D)z\bigr)_{L^2}\\[1ex]
&\!\!\!\!=\!\!\!\!&\Sum_{|j'-j|\leq N}\!\bigl(\phi(2^{-j}D)((\dot S_{j'\!-\!1}\!-\!\dot S_{j\!-\!1})v^k\pa_k\dot\Delta_{j'}z)|\phi(2^{-j}D)z\bigr)_{L^2}\\
&&\hspace{1cm}+\!\Sum_{|j'-j|\leq N}\!\bigl([\phi(2^{-j}D),\dot S_{j-1}v^k]\pa_k\dot\Delta_{j'}z|\phi(2^{-j}D)z\bigr)_{L^2}\\
&&\hspace{3cm}+(\dot S_{j-1}v^k\pa_k\phi(2^{-j}D)z|\phi(2^{-j}D)z)_{L^2}.\end{array}
$$
The first term may be bounded thanks to spectral localization and Bernstein inequality, and 
the second, to a standard commutator estimate (see e.g. \cite{BCD}, Lemma 2.97).
The last term may be dealt with according to the following integration by parts:
$$
\int \dot S_{j-1}v^k\pa_k\phi(2^{-j}D)z\:\phi(2^{-j}D)z\,dx=-\frac12\int \div \dot S_{j-1}v\: (\phi(2^{-j}D)z)^2\,dx.
$$

Let us now resume to the proof of Proposition \ref{p:paralinear}.
As an example, we show how the first two steps of the previous subsection have to be adapted for \eqref{eq:lhNS}.
So we apply $\ddj$ to the first three equations and get:
$$
 \left\{
    \begin{aligned}
      & \pa_t b_j+\Lambda d_j+\ddj(T_{v^k}\pa_k b)=B_j, \\
      &\pa_t d_j + \ddj(T_{v^k}\pa_k d)- \Delta d_j-\Lambda(b_j+\theta_j)  = D_j,\\
      &\pa_t \theta_j  +\Lambda d_j +\ddj(T_{v^k}\pa_k \theta)-\tilde\kappa \Delta \theta_j=G_j.
    \end{aligned}
  \right.
$$
Taking the $L^2$-inner product of the first, second and third equations with $b_j,$ $d_j$ and $\theta_j,$
 respectively, we find that
$$\displaylines{
\f12\f{d}{dt}\big[\|b_j\|_{L^2}^2+\|d_j\|_{L^2}^2+\|\theta_j\|_{L^2}^2\big]+\|\Lambda d_j\|_{L^2}^2+\tilde\kappa\|
\Lambda \theta_j\|_{L^2}^2 +  \bigl(\ddj(T_{v^k}\pa_k b)|\ddj b\bigr)_{L^2}\hfill\cr\hfill
+\bigl(\ddj(T_{v^k}\pa_k d)|\ddj d\bigr)_{L^2}
+\bigl(\ddj(T_{v^k}\pa_k \theta)|\ddj\theta\bigr)_{L^2}=(B_j|b_j)_{L^2}+(D_j|d_j)_{L^2}+(G_j|\theta_j)_{L^2}.}
$$
 Therefore using Inequality \eqref{eq:paraconv} we readily get
 $$\displaylines{
\f12\f{d}{dt}\|(b_j,d_j,\theta_j)\|_{L^2}^2+\|\Lambda d_j\|_{L^2}^2+\tilde\kappa\|
\Lambda \theta_j\|_{L^2}^2\leq \|(b_j,d_j,\theta_j)\|_{L^2}\hfill\cr\hfill
\times \Bigl(\|(B_j,D_j,G_j)\|_{L^2}+C\|\nabla v\|_{L^\infty}\Sum_{|j'-j|\leq N}\|(b_{j'},d_{j'},\theta_{j'})\|_{L^2}\Bigr).}
$$
Next, we use the fact that $\Lambda b_j-d_j$ satisfies
$$
\pa_t(\Lambda b_j-d_j)+\Lambda(b_j+d_j)+\Lambda\ddj(T_{v^k}\pa_kb)-\ddj(T_{v^k}\pa_kd)=
\Lambda B_j-D_j.
$$
Therefore arguing as in the second step of the previous section, we get
  $$\displaylines{
  \f12\f{d}{dt}f_j^2 +H_j^2+\bigl((\Lambda\ddj(T_{v^k}\pa_kb)-\ddj(T_{v^k}\pa_kd))|(\Lambda b_j-d_j)\bigr)_{L^2}
  \hfill\cr\hfill +  (1+\alpha) \bigl(\ddj(T_{v^k}\pa_k b)|\ddj b\bigr)_{L^2}+\alpha\bigl(\ddj(T_{v^k}\pa_k d)|\ddj d\bigr)_{L^2}
+(1+\alpha)\bigl(\ddj(T_{v^k}\pa_k \theta)|\ddj\theta\bigr)_{L^2}\hfill\cr\hfill
  = (1+\alpha)(B_j|b_j)_{L^2}+\alpha(D_j|d_j)_{L^2}+(1+\alpha)(G_j|\theta_j)_{L^2}+\bigl((\Lambda B_j-D_j)|(\Lambda b_j-d_j)\bigr)_{L^2}}
$$
where $f^j$ and $H^j$ have been defined in \eqref{eq:fj} and \eqref{eq:Hj}, and  $\alpha=2/\check\kappa-1.$
\medbreak
Note that all the paraconvection terms except the first one may be directly dealt with according to \eqref{eq:paraconv}.
As for the first one,  we may use the decomposition:
$$
\Lambda\ddj(T_{v^k}\pa_kb)-\ddj(T_{v^k}\pa_kd)
=\ddj T_{v^k} \pa_k(\Lambda b-d)+2^j[\phi(2^{-j}D),T_{v^k}]\pa_k b
$$
with $\phi(\xi):=|\xi|\varphi(\xi).$
Therefore, applying again  \eqref{eq:paraconv}
and Lemma 2.97 in \cite{BCD}, we end up with
$$\displaylines{
\bigl|\bigl((\Lambda\ddj(T_{v^k}\pa_kb)-\ddj(T_{v^k}\pa_kd))|(\Lambda b_j-d_j)\bigr)_{L^2}\bigr|
\hfill\cr\hfill\lesssim \|\nabla v\|_{L^\infty}\|\Lambda b_j-d_j\|_{L^2}\!\sum_{|j'-j|\leq N}
\!\bigl(\|\Lambda b_{j'}-d_{j'}\|_{L^2}+\|\Lambda b_{j'}\|_{L^2}\bigr).}
$$

The following steps may be done similarly, once noticed that operators
such as $\Lambda^{\pm1}\ddj$ may be written $2^{\pm j}\phi(2^{-j}D)$ for some suitable
function $\phi$ with the same support as $\varphi.$
The final inequality may be obtained after multiplying  by $2^{js},$ performing
a summation over $j$ and applying Gronwall's lemma. The details are left to the reader.
\end{proof}


\subsection{The proof of global existence}\label{ss:global}

This paragraph is devoted to proving   Theorem \ref{th:main1} in the case
$\eps=\nu=1.$ As explained at the incipit of this section, this will imply
the global existence for general positive $\eps$ and $\nu.$
The proof of existence and uniqueness is similar to that for the full Navier-Stokes system in \cite{D2}.
The only difference here is that the source term $\nabla V^\eps$  is not in $L^1(\R^+;\dot B^{\f12}_{2,1}).$
However it still belongs to $L^1_{loc}(\R^+;\dot B^{\f12}_{2,1})$ which suffices to establish
local-in-time results, global results being a consequence of the following a priori estimates.
Note that a direct proof based on Friedrichs spectral truncation method may also be
easily implemented as we are interested in $L^2$ type estimates.
\medbreak
 So let us now derive  global a priori estimates under the smallness assumptions
\eqref{eq:smalldata1} and \eqref{eq:smalldata2}.
Such estimates rely on Proposition \ref{p:paralinear} with $s=1/2,$ once noticed that
$$
u=\cP u+(\Id-\cP) u= w -\nabla \Lambda^{-1} d,$$
that $(b,d,\theta,w)$ satisfies \eqref{eq:lhNS} with $v=u$ and,
 using the summation convention over repeated indices,
   \beno B&:=&T_{u^k}\pa_k b-u\cdot\na b-b\div u-\pa_t \widetilde{V}-\div( u\widetilde{V}), \\
   D&:=&T_{u^k}\pa_k d-\Lambda^{-1}\div(u\cdot\na u)
   -\Lambda^{-1}\div\bigg[\f{a}{1+ a}(\tilde\mu\Delta u+(\tilde\lambda+\tilde\mu)\nabla\div u)+ \f{ (\theta-a)\na  a }{(1+ a)}\bigg], \\
  G&:=&T_{u^k}\pa_k \theta-u\cdot\na \theta-\theta\div u-\f{a}{1+ a}\tilde{\kappa}\Delta \theta+\frac{1}{1+a}[2\tilde\mu|Du|^2+\tilde\lambda (\div u)^2],\\
W&:=& T_{u^k}\pa_k w-\cP(u\cdot\na u)
 -\cP\bigg[\f{a}{1+a}(\tilde\mu\Delta u+(\tilde\lambda+\tilde\mu)\nabla\div u)
 +\f{ (\theta-a)\na  a }{(1+ a)} \bigg]\cdotp \eeno

Setting $U(t):=\int_0^t \|\na u\|_{L^\infty}\,d\tau$ and
$$
X(t):=\|b\|_{\tilde{L}_t^\infty(\tilde B^{\f32,-}_1)}+\|u\|_{\tilde{L}_t^\infty(\dot B^\f12_{2,1})}+\|\theta\|_{\tilde{L}_t^\infty(\tilde B^{-\f12,+}_1)}
+\int_0^t\bigl(\|b\|_{\tilde B^{\f32,+}_1}+\|u\|_{\dot B^{\f52}_{2,1}}+\|\theta\|_{\tilde B^{\f32,+}_1}\bigr)\,d\tau,
$$
  we may write
\begin{multline}\label{eq:X}
X(t)\leq Ke^{CU(t)}\biggl(X(0) +\int_0^te^{-CU(\tau)}\bigl(\|B\|_{\tilde B^{\f32,-}_1}+\|(D,W)\|_{\dot B^{\f12}_{2,1}}
+\|G\|_{\tilde B^{-\f12,+}_1}\bigr)
\,d\tau\biggr)\cdotp
 \end{multline}
 Throughout we suppose that $1+a$ is bounded and bounded away from $0,$ an assumption
 that is satisfied provided $\|a\|_{L^\infty(\dot B^{\f32}_{2,1})}$ is small enough.

\subsubsection*{Bounding $\|B\|_{\tilde B^{\f12,\f32}_1}$}

According to Bony's decomposition, we have
$$
u\cdot\na b-T_{u^k}\pa_k b=T'_{\pa_k b}u^k.
$$
Hence standard results for the paraproduct imply (just decompose $b$  into low and high frequencies):
\begin{equation}\label{eq:p1}
\|T_{u^k}\pa_k b-u\cdot\na b\|_{\tilde B^{\f32,-}_1}\lesssim \|\na b\|_{\tilde B^{\f12,-}_1}\|u\|_{\dot B^{\f52}_{2,1}}.
\end{equation}
Likewise, according to Lemma \ref{est-pro}, we have
\begin{equation}\label{eq:p2}
\|b\,\div u\|_{\tilde B^{\f32,-}_1}\lesssim \|b\|_{\tilde B^{\f32,-}_1}\|\div u\|_{\dot B^{\f32}_{2,1}}.
\end{equation}
Finally, because $\div (\widetilde{V}u)=\widetilde{V}\div u+u\cdot\nabla \widetilde{V},$ we have
\begin{equation}\label{eq:p3}
\|\div (u \widetilde{V})\|_{\tilde B^{\f32,-}_1}\lesssim
 \|\widetilde{V}\|_{\tilde B^{\f32,-}_1}\|\div u\|_{\dot B^{\f32}_{2,1}}+
 \|\nabla \widetilde{V}\|_{\tilde B^{\f32,-}_1}\|u\|_{\dot B^{\f32}_{2,1}}.
\end{equation}

\subsubsection*{Bounding $\|(D,W)\|_{\dot B^{\f12}_{2,1}}$}

We concentrate on $D,$ proving estimates for $W$ being similar.
We have
$$
T_{u^k}\pa_k d-\Lambda^{-1}\div(u\cdot\na u)=[T_{u^k},\Lambda^{-1}\pa_i]\pa_ku^i-\Lambda^{-1}\pa_iT'_{\pa_ku^i}u^k.
$$
Therefore, resorting to standard commutator estimates and continuity results for the paraproduct
(see e.g. \cite{BCD}), we get
\begin{equation}\label{eq:p4}
\|T_{u^k}\pa_k d-\Lambda^{-1}\div(u\cdot\na u)\|_{\dot B^{\f12}_{2,1}}\lesssim
\|\nabla u\|_{L^\infty}\|u\|_{\dot B^{\f12}_{2,1}}.
\end{equation}
Next, combining composition and product estimates yields
\begin{equation}\label{eq:p5}
\|\f{a}{1+ a}\nabla^2u\|_{\dot B^{\f12}_{2,1}}\lesssim \|a\|_{\dot B^{\f32}_{2,1}}\|u\|_{\dot B^{\f52}_{2,1}},
\end{equation}
and also
$$
\Bigl\|\f{(\theta-a)\na  a }{1+ a}\Bigr\|_{\dot B^{\f12}_{2,1}}\lesssim \bigl(1+ \|a\|_{\dot B^{\f32}_{2,1}}\bigr)
\|\nabla a\|_{\dot B^{\f12}_{2,1}}\bigl(\|\theta^\ell\|_{\dot B^{\f32}_{2,1}}+\|\theta^h\|_{\dot B^{\f32}_{2,1}}
+\|a\|_{\dot B^{\f32}_{2,1}}\bigr).
$$
Note that we expect $\theta^\ell$ and $\theta^h$ to belong to
$L^2(\R^+;\dot B^{\f32}_{2,1})$ and $L^1(\R^+;\dot B^{\f32}_{2,1}),$ respectively, and that, applying H\"older inequality yields
$$
\Bigl\|\f{(\theta\!-\!a)\na  a }{1+ a}\Bigr\|_{L^1(\dot B^{\f12}_{2,1})}\lesssim \bigl(1+ \|a\|_{L^\infty(\dot B^{\f32}_{2,1})}\bigr)
\bigl(\|a\|_{L^2(\dot B^{\f32}_{2,1})}\|(a,\theta^\ell)\|_{L^2(\dot B^{\f32}_{2,1})}
+\|a\|_{L^\infty(\dot B^{\f32}_{2,1})}\|\theta^h\|_{L^1(\dot B^{\f32}_{2,1})}\bigr).
$$
So finally, because $\tilde B^{\f32,-}_1\hookrightarrow\dot B^{\f32}_{2,1},$
\begin{multline}\label{eq:p6}
\Bigl\|\f{(\theta-a)\na  a }{1+ a}\Bigr\|_{L^1(\dot B^{\f12}_{2,1})}\lesssim \bigl(1+ \|a\|_{L^\infty(\tilde B^{\f32,-}_1)}\bigr)\\\times
\bigl(\|a\|_{L^2(\dot B^{\f32}_{2,1})}^2+\|a\|_{L^2(\dot B^{\f32}_{2,1})}\|\theta\|_{L^2(\tilde B^{\f12,+}_{1})}
+\|a\|_{L^\infty(\tilde B^{\f32,-}_{1})}\|\theta\|_{L^1(\tilde B^{\f32,+}_{1})}\bigr).
\end{multline}

\subsubsection*{Bounding $\|G\|_{\tilde B^{\f12,+}_1}$}

We first use the fact that
$$u\cdot\na \theta-
T_{u^k}\pa_k \theta=T'_{\pa_k\theta}u^k.
$$
Therefore
\begin{equation}\label{eq:p7}
\|T_{u^k}\pa_k \theta-u\cdot\na \theta\|_{\tilde B^{-\f12,+}_1}\lesssim \|\nabla\theta\|_{\tilde B^{-\f32,+}_1}
\|u\|_{\dot B^{\f52}_{2,1}}.
\end{equation}
Next, Lemma \ref{est-pro} implies that
\begin{eqnarray}\label{eq:p8}
&&\|\theta\,\div u\|_{\tilde B^{-\f12,+}_1}\lesssim \|\theta\|_{\tilde B^{-\f12,+}_1}\|\div u\|_{\dot B^{\f32}_{2,1}},\\\label{eq:p9}
&&\|\f{a}{1+ a}\Delta \theta\|_{\tilde B^{-\f12,+}_1}\lesssim \|a\|_{\dot B^{\f32}_{2,1}}\| \theta\|_{\tilde B^{\f32,+}_1}.
\end{eqnarray}
Finally, since $\dot B^{-\f12}_{2,1}\hookrightarrow\tilde B^{-\f12,+}_1,$ standard product laws
 enable us to write that
\begin{equation}\label{eq:p10}
 \Bigl\|\frac{1}{1+a}\nabla u\otimes \nabla u\Bigr\|_{\tilde B^{-\f12,+}_1}\lesssim \bigl(1+ \|a\|_{\dot B^{\f32}_{2,1}}\bigr)
 \|\nabla u\|_{\dot B^{1/2}_{2,1}}^2.
\end{equation}
 Plugging inequalities \eqref{eq:p1} to \eqref{eq:p10} in \eqref{eq:X} and making the assumption that
 \begin{equation}\label{eq:small}
 \|\nabla u\|_{L^1(L^\infty)}\ll 1\quad\hbox{and}\quad
 \|\widetilde{V}\|_{L^\infty(\tilde  B^{\f32,-}_{1})}
 +\|\nabla \widetilde{V}\|_{L^2(\tilde B^{\f32,-}_1)}\ll1,
 \end{equation}
 we thus get
 $$
 X(t)\leq C\bigl(X(0)+\|\pa_t\tilde V\|_{L^1(\tilde B^{\f32,-}_1)}  +X^2(t)+X^4(t)\bigr).
 $$
 It is now clear that the solution may be bounded for all time if $X(0)$ and $\tilde V$ are  small enough:
 we get for some constant $K$ depending only on $\tilde\kappa,$ $\tilde\mu$ and $\tilde\lambda,$
 \begin{equation}\label{eq:boundX}
 X(t)\leq KC_0
 \end{equation}
 with
 $$
 C_0:=\|b_0\|_{\tilde B^{\f32,-}_1}+\|u_0\|_{\dot B^{\f12}_{2,1}}+\|\theta_0\|_{\tilde B^{-\f12,+}_1}+
 \|\pa_t\widetilde{V}\|_{L^1(\tilde B^{\f32,-}_1)}.
$$


\subsection{Convergence to the viscous and diffusive Boussinesq system}

The key observation is that in the asymptotics $\eps$ going to $0,$
the leading order part of the system for $(q^\eps,\cQ u^\eps)$ is the acoustic wave equation,
which has dispersive properties. This will enable us to show (first step)
that  $(q^\eps,\cQ u^\eps)$ tends strongly to $0$ in some negative Besov space.
Next, we shall check that the limit Boussinesq system \eqref{eq:BouS}  supplemented with
small data $v_0\in \dot B^{\f12}_{2,1},$ $\Theta_0\in\dot B^{-\f12}_{2,1}$ and
potential $V$ with $\pa_tV\in L^1(\dot B^{\f12}_{2,1})$ and $\nabla V\in L^2(\dot B^{\f12}_{2,1})$  has
a unique global solution.  Finally, resorting to
maximal regularity estimates for the heat equation, we will conclude that
$(\cP u^\eps,\Theta^\eps)\to (v,\Theta).$

\subsubsection{Convergence to zero for the oscillating modes $(q^\eps,\cQ u^\eps)$}
In order to exhibit the decay properties of $(q^\eps,\cQ u^\eps),$  we only have to consider
the case $\eps=1$ and $\nu=1$ thanks to the rescaling  \eqref{eq:change}, which implies
in particular that
$$
(q,\cQ u)(t,x)=\eps(q^\eps,\cQ u^\eps)(\eps^2\nu t,\eps\nu x).
$$
Then  using Strichartz estimates for the acoustic wave equation (see Proposition \ref{p:strichartz}
in the appendix)
will enable us to bound some suitable norm of $(q,\cQ u).$ Resuming to the
original variables, we then get for free the convergence to $0$
 for $(q^\eps,\cQ u^\eps)$, with an explicit rate.
\medbreak
Let us give more details : $(q,\cQ u)$ satisfies
\begin{equation}\label{eq:oscbis}
  \left\{
    \begin{aligned}
      & \pa_t q +\sqrt2\div \cQ u=-\div(q u)
      -\frac{\sqrt2}2\biggl(\pa_t\widetilde{V}+\div(\widetilde{V}u)+\tilde\kappa\frac{\Delta\theta}{1+ a}\biggr)\\
      &\hspace{7cm}+\frac{\sqrt2}2\frac{1}{1+ a}[2\mu|Du|^2+\lambda (\div u)^2],\\
      &\pa_t \cQ u + \sqrt2\nabla q=\cQ\biggl(\biggl(\frac{a-\theta}{1+a}\biggr)\nabla a-
      \f{\cA u}{1+a} -u\cdot\nabla u\biggr)\cdotp
    \end{aligned}
  \right.
\end{equation}
Therefore Strichartz estimates (first inequality of Proposition \ref{p:strichartz} with $s=1/2$)
enable us to bound the norm of $(q,\cQ u)$ in $\tilde L^{\f{2p}{p-2}}(\dot B^{\f2p-\f12}_{p,1})$ for all $p\in[2,\infty)$
in terms of the norm of the data in $\dot B^{\f12}_{2,1}$ and of the right-hand side in $L^1(\dot B^{\f12}_{2,1}).$
Under  our present assumptions however,  the last term in the r.h.s. of the first equation
belongs only to the larger space
 $L^1(\tilde B^{-\f12,+}_{1}).$
So  one has to use the \emph{second} inequality of Proposition \ref{p:strichartz} and  just get estimates in
the wider space $\tilde L^{\f{2p}{p-2}}(\tilde B^{\f2p-\f32,+}_{p,1}).$

Let us bound the r.h.s. of \eqref{eq:oscbis} in  $L^1(\tilde B^{-\f12,+}_{1}).$
All the terms may be dealt with by taking advantage of standard product laws and  Lemma \ref{est-pro}.
More precisely we have, keeping in mind the smallness of $a$ in $L^\infty(\tilde B^{\f32,-}_{1})$
(and thus also in $L^\infty(\dot B^{\f32}_{2,1})$ and $L^\infty(\R^+\times\R^3)$):
$$\!\!
\begin{array}{lll}
\|\div(\widetilde{V}u)\|_{L^1(\dot B^{\f12}_{2,1})}&\!\!\!\!\lesssim\!\!\!\!&\|\widetilde{V}\|_{L^2(\dot B^{\f32}_{2,1})}\|u\|_{L^2(\dot B^{\f32}_{2,1})},\\[1ex]
\|(1\!+\!a)^{-1}\Delta\theta\|_{L^1(\tilde B^{-\f12,+}_{1})}&\!\!\!\!\lesssim\!\!\!\!&\|\theta\|_{L^1(\tilde B^{\f32,+}_{1})},\\[1ex]
    \|(1\!+\!a)^{-1}\nabla u\!\otimes\! \nabla u\|_{L^1(\dot B^{-\f12}_{2,1})}&\!\!\!\!\lesssim\!\!\!\!&\|u\|^2_{L^2(\dot B^{\f32}_{2,1})},\\[1ex]
\|(1\!+\!a)^{-1}(a\!-\!\theta)\nabla a\|_{L^1(\dot B^{\f12}_{2,1})}&\!\!\!\!\lesssim\!\!\!\!&\|a\|_{L^2(\dot B^{\f32}_{2,1})}
(\|a\|_{L^2(\dot B^{\f32}_{2,1})}\!\!+\!\!\|\theta\|_{L^2(\tilde B^{\f12,+}_1)})\!+\!\|a\|_{L^\infty(\tilde B^{\f32,-}_{1})}
\|\theta\|_{L^1(\tilde B^{\f32,+}_{1})}\\[1ex]
\|(1\!+\!a)^{-1}\cA u\|_{L^1(\dot B^{\f12}_{2,1})}&\!\!\!\!\lesssim\!\!\!\!&\|u\|_{L^1(\dot B^{\f52}_{2,1})},\\[1ex]
\|u\cdot\nabla u\|_{L^1(\dot B^{\f12}_{2,1})}&\!\!\!\!\lesssim\!\!\!\!&\|u\|^2_{L^2(\dot B^{\f32}_{2,1})}.
\end{array}\!\!
 $$
Given that $\cQ$ is a $0$-th order multiplier (hence maps
all Besov spaces involved here into themselves), that
$\dot B^{-\f12}_{2,1}$ and $\dot B^{\f12}_{2,1}$ are continuously embedded in $\tilde B^{-\f12,+}_1,$
and that $\tilde B^{\f32,-}_1$ is continuously embedded in $\tilde B^{-\f12,+}_1,$
we eventually conclude that (with the notation of \eqref{eq:X} and \eqref{eq:boundX}):
$$
\|(q,\cQ u)\|_{\tilde L^{\f{2p}{p-2}}(\tilde B^{\f2p-\f32,+}_{p,1})}\lesssim
\|(q_0,\cQ u_0)\|_{\tilde B^{-\f12,+}_1}+X+X^2+\|\pa_t\tilde V\|_{L^1(\tilde B^{\f32,-}_1)}+
\|\tilde V\|_{L^2(\dot B^{\f32}_{2,1})}X.
$$
Therefore, given that $X(t)\leq KC_0$ and that $C_0$ is small,
\begin{equation}\label{eq:comp1}
\|(q,\cQ u)\|_{\tilde L^{\f{2p}{p-2}}(\tilde B^{\f2p-\f32,+}_{p,1})}\leq KC_0\quad\hbox{for all }\ p\in[2,\infty)
\end{equation}
with $K$ depending only on $p,$ $\tilde\kappa$ and  $\tilde\mu.$
\medbreak
On the other hand,  Inequality \eqref{eq:boundX} implies that
$$
\|(q,\cQ u)\|_{L^1(\tilde B^{\f32,+}_1)}\leq KC_0.
$$
Therefore, using the fact that
$$
[L^1(\tilde B^{\f32,+}_1),\tilde L^{\f{2p}{p-2}}(\tilde B^{\f2p-\f32,+}_{p,1})]_{p/(p+2)}
\subset \wt L^2(\tilde B^{\f4q-\f32}_{q,1})\quad\hbox{with } q=(p+2)/2,
$$
we get also
$$
\|(q,\cQ u)\|_{\wt L^2(\tilde B^{\f4q-\f32,+}_{q,1})}\leq KC_0\quad\hbox{for all }\ q\in[2,\infty).
$$
Given that $(q,\cQ u)$ is in $\tilde{L}^2(\tilde B^{\f12,+}_{1})$ hence in $\tilde{L}^2(\tilde B^{-1+\f3q,+}_{q,1}),$
an ultimate interpolation  ensures that
\begin{equation}\label{eq:comp2}
\|(q,\cQ u)\|_{\wt L^2(\tilde B^{s-1,+}_{p,1})}\leq KC_0\quad\hbox{for all}\quad
s\in [-1/2+4/p,3/p] \ \hbox{ and }\ p\in[2,\infty).
\end{equation}
Of course, we also have $\cQ u$ in $\wt L^2(\dot B^{\f32}_{2,1})$ whence in $\wt L^2(\dot B^{\f3p}_{p,1})$
for all $p\geq2.$ Therefore, interpolating with \eqref{eq:comp2},
we deduce that
\begin{equation}\label{eq:comp2a}
\|\cQ u\|_{\wt L^2(\dot B^{s}_{p,1})}\leq KC_0\quad\hbox{for all}\quad s\in [-1/2+4/p,3/p].
\end{equation}
Now coming back to the initial variables, \eqref{eq:comp1}, \eqref{eq:comp2}
 and \eqref{eq:comp2a} translate  into
\begin{eqnarray}\label{eq:comp3}
&&\nu^{\f12-\f1p}\|(q^\eps,\cQ u^\eps)\|_{\tilde L^{\f{2p}{p-2}}(\tilde B^{\f2p-\f32,+}_{p,\eps\nu})}
\leq K(\eps\nu)^{\f12-\f1p}C_0^\eps\quad\hbox{for all }\ p\in[2,\infty),\\ \label{eq:comp4}
&&\nu^{\f12}\|q^\eps\|_{\tilde L^2(\tilde B^{s-1,+}_{p,\eps\nu})}\leq K(\eps\nu)^{\f3p-s}C_0^\eps\ \hbox{ for all }\  s\in [-1/2+4/p,3/p],\\ \label{eq:comp5}
&&\nu^{\f12}\|\cQ u^\eps\|_{\tilde L^2(\dot B^{s}_{p,1})}\leq K(\eps\nu)^{\f3p-s}C_0^\eps
\ \hbox{ for all }\  s\in [-1/2+4/p,3/p].
\end{eqnarray}


\subsubsection{Global existence for the Boussinesq system \eqref{eq:BouS}}

Let us first briefly justify that, under our assumptions, the limit data $(\Theta_0,v_0,V)$ give
rise to a global solution to \eqref{eq:BouS}.
Establishing this  is an obvious modification of the proof for the standard incompressible
Navier-Stokes equation. It is only a matter of rewriting the system as
$$\begin{aligned}
\Theta(t)&=e^{t\frac\kappa2\Delta}\Theta_0+\int_0^te^{(t-\tau)\f\kappa2\Delta}\Bigl(
\f{\sqrt2}2(\pa_tV+v\cdot\nabla V)-v\cdot\nabla\Theta\Bigr)\,d\tau,\\
v(t)&=e^{t\mu\Delta}v_0-\int_0^te^{\mu(t-\tau)\Delta}\cP\Bigl(v\cdot\nabla v+\f{\sqrt2}2\Theta\nabla V\Bigr)\,d\tau,
\end{aligned}
$$
and the global-in-time solvability for small data may be achieved as a consequence of
the Banach fixed point theorem.
Let us just check that global a priori estimates are available in the case of small data.
Applying Proposition \ref{p:heatestimates}  and using  that the product is continuous from
$\dot B^{\f12}_{2,1}\times\dot B^{\f32}_{2,1}$ to $\dot B^{\f12}_{2,1}$ implies that
$$
\displaylines{\quad\|\Theta\|_{\tilde L^\infty(\dot B^{\f12}_{2,1})}+\kappa\|\Theta\|_{L^1(\dot B^{\f52}_{2,1})}
\lesssim\|\Theta_0\|_{\dot B^{\f12}_{2,1}}+\|\pa_tV\|_{L^1(\dot B^{\f12}_{2,1})}
\hfill\cr\hfill+\|v\|_{L^2(\dot B^{\f32}_{2,1})}\bigl(\|\nabla\Theta\|_{L^2(\dot B^{\f12}_{2,1})}+\|\na V\|_{L^2(\dot B^{\f12}_{2,1})}\bigr)\quad}
$$
and that
$$
\|v\|_{\tilde L^\infty(\dot B^{\f12}_{2,1})}+\mu\|v\|_{L^1(\dot B^{\f52}_{2,1})}
\lesssim\|v_0\|_{\dot B^{\f12}_{2,1}}
+\|v\|_{L^2(\dot B^{\f32}_{2,1})}\|\nabla v\|_{L^2(\dot B^{\f12}_{2,1})}+
\|\na V\|_{L^2(\dot B^{\f12}_{2,1})}\|\Theta\|_{L^2(\dot B^{\f32}_{2,1})}.
$$
Hence, setting
$$
Y:= \|(\Theta,v)\|_{\tilde L^\infty(\dot B^{\f12}_{2,1})}+\nu\|(\Theta,v)\|_{L^1(\dot B^{\f52}_{2,1})},
$$
we get for some constant $K=K(\tilde\mu,\tilde\kappa),$
$$
Y\leq K\bigl(Y_0+\|\pa_tV\|_{L^1(\dot B^{\f12}_{2,1})}+Y(Y+\nu^{-\f12}\|\na V\|_{L^2(\dot B^{\f12}_{2,1})})\bigr),
$$
and it thus easy to close the estimates globally if $Y_0,$  $\|\pa_tV\|_{L^1(\dot B^{\f12}_{2,1})}$
and  $\nu^{\f12}\|\na V\|_{L^2(\dot B^{\f12}_{2,1})}$ are small compared to $\nu.$


\subsubsection{Convergence for the ``incompressible'' modes $(\Theta^\eps,\cP u^\eps)$}

In this paragraph, we prove the convergence of $(\Theta^\eps,\cP u^\eps)$ to the solution
$(\Theta,v)$ to the Boussinesq equation \eqref{eq:BouS}.
 We  claim that for any $p\in[2,\infty)$ and $s\in[-1/2+4/p,3/p]$ with $s>1/2$~:
\begin{itemize}
\item $\dT:=\Theta^\eps-\Theta$  tends to $0$ in
$\tilde L^2(\tilde B^{s-1,+}_{p,\eps\nu})\cap \tilde L^\infty(\tilde B^{s-2,+}_{p,\eps\nu}),$
\item $\dv:=\cP u^\eps-v$  tends to $0$ in
$L^1(\tilde B^{s,+}_{p,\eps\nu})\cap \tilde L^\infty(\tilde B^{s-2,+}_{p,\eps\nu}).$
\end{itemize}
For proving that, we shall use the parabolic estimates of Proposition \ref{p:heatestimates} for
the system satisfied by $(\dT,\dv).$
Let us first focus on $\dT.$ By performing the difference between \eqref{eq:nonosc} and \eqref{eq:BouS},
 we see that
$$
\displaylines{\quad
\pa_t\dT
-\f\kappa2\Delta\dT=-\cP u^\eps\cdot\nabla\dT-\dv\cdot\nabla\Theta+\f{\sqrt2}2\bigl(\pa_t\dV+\cP u^\eps\!\cdot\!\nabla\dV+\dv\cdot\nabla V\bigr)\hfill\cr\hfill
+\div((V^\eps-\Theta^\eps)\cQ u^\eps)
+\f\kappa2\Delta q^\eps-\f{\sqrt2}2\kappa\frac{\eps a^\eps}{1+\eps a^\eps}\Delta\theta^\eps
      +\f{\sqrt2}2\frac{\eps}{1+\eps a^\eps}[2\mu|Du^\eps|^2\!+\!\lambda (\div u^\eps)^2].\quad}
$$
Hence, according to Proposition \ref{p:heatestimates},  it suffices to get suitable estimates for the right-hand side in
$L^1(\tilde B^{s-2,+}_{p,\eps\nu})+ \tilde L^2(\tilde B^{s-3,+}_{p,\eps\nu}).$
{}From product estimates (see Lemma \ref{est-pro}) we easily get under the assumption that $s>1/2$
(in fact here we just need $s>-1/2$ owing to $\div\dv=\div\cP u^\eps=0$):
\begin{eqnarray}\label{eq:theta1}
&&\|\cP u^\eps\cdot\nabla\dT\| _{L^1(\tilde B^{s-2,+}_{p,\eps\nu})}\lesssim
\|\cP u^\eps\|_{L^2(\dot B^{\f32}_{2,1})}\|\nabla\dT\|_{L^2(\tilde B^{s-2,+}_{p,\eps\nu})},\\ \label{eq:theta2}
&&\|\dv\cdot\nabla\Theta\|_{L^1(\tilde B^{s-2,+}_{p,\eps\nu})}\lesssim
\|\nabla\Theta\|_{L^2(\dot B^{\f12}_{2,1})}\|\dv\|_{L^2(\tilde B^{s-1,+}_{p,\eps\nu})},\\ \label{eq:theta3}
&&\|\cP u^\eps\!\cdot\!\nabla\dV\|_{L^1(\tilde B^{s-2,+}_{p,\eps\nu})}\lesssim
\|\cP u^\eps\|_{L^2(\dot B^{\f32}_{2,1})}\|\nabla\dV\|_{L^2(\tilde B^{s-2,+}_{p,\eps\nu})},\\ \label{eq:theta4}
&&\|\dv\cdot\nabla V\|_{L^1(\tilde B^{s-2,+}_{p,\eps\nu})}\lesssim
\|\nabla V\|_{L^2(\dot B^{\f12}_{2,1})}\|\dv\|_{L^2(\tilde B^{s-1,+}_{p,\eps\nu})}.
\end{eqnarray}
We split the next term into (referring to the notation introduced in \eqref{eq:decompo} with $\alpha=\eps\nu$)
$$
\div((V^\eps-\Theta^\eps)\cQ u^\eps)=\div((V^\eps-\Theta^{\eps,\ell})\cQ u^\eps)
-\div(\Theta^{\eps,h}\cQ u^\eps).
$$
First we have
\begin{align}\label{eq:theta5}
\|\div((V^\eps-\Theta^{\eps,\ell})\cQ u^\eps)\|_{L^1(\tilde B^{s-2,+}_{p,\eps\nu})}
&\lesssim \|(V^\eps-\Theta^{\eps,\ell})\cQ u^\eps\|_{L^1(\tilde B^{s-1,+}_{p,\eps\nu})}\nonumber\\
&\lesssim (\|V^\eps\|_{L^2(\dot B^{\f32}_{2,1})}+\|\Theta^{\eps,\ell}\|_{L^2(\dot B^{\f32}_{2,1})})
\|\cQ u^\eps\|_{L^2(\tilde B^{s-1,+}_{p,\eps\nu})},
\end{align}
and, second
\begin{equation}\label{eq:theta6}
\|\div(\Theta^{\eps,h}\cQ u^\eps)\|_{L^1(\tilde B^{s-2,+}_{p,\eps\nu})}\lesssim \f1{\epsilon\nu}
\|\Theta^{\eps,h}\cQ u^\eps\|_{L^1(\dot B^{s-1}_{p,1})}\lesssim
\|\cQ u^\eps\|_{L^2(\dot B^{s}_{p,1})}\|\Theta^{\eps,h}\|_{L^2(\tilde B^{\f12,+}_{\eps\nu})}.
\end{equation}
Next, we see that, for all $\alpha\in[0,1),$
$$
\|\frac{\eps a^\eps}{1+\eps a^\eps}\Delta\theta^\eps\|_{L^1(\tilde B^{-\f12-\alpha,+}_{\eps\nu})}
\lesssim\|\eps a^\eps\|_{L^\infty(\dot B^{\f32-\alpha}_{2,1})}\|\Delta\theta^\eps\|_{L^1(\tilde B^{-\f12,+}_{\eps\nu})}.
$$
Now, by interpolation
$$
\| a^\eps\|_{\dot B^{\f32-\alpha}_{2,1}}\lesssim \|a^\eps\|_{\dot B^{\f32}_{2,1}}^{1-\alpha} \|a^\eps\|_{\dot B^{\f12}_{2,1}}^{\alpha}
$$
and the definition of the norm in $\tilde B^{\f32,-}_{\eps\nu}$ implies that
$$
\|a^\eps\|_{\dot B^{\f12}_{2,1}}+\eps\nu\|a^\eps\|_{\dot B^{\f32}_{2,1}} \lesssim\|a^\eps\|_{\tilde B^{\f32,-}_{2,\eps\nu}}.
$$
Therefore
\begin{equation}\label{eq:alpha}
\|\eps\nu a^\eps\|_{\dot B^{\f32-\alpha}_{2,1}}\lesssim(\eps\nu)^\alpha\|a^\eps\|_{\tilde B^{\f32,-}_{2,\eps\nu}}.
\end{equation}
We also notice that $\tilde B^{-\f12-\alpha,+}_{2,\eps\nu}\hookrightarrow \tilde B^{\f3p-2-\alpha,+}_{p,\eps\nu}$ for $p\geq2.$
Therefore if we take
$$
 \alpha:=3/p-s,
 $$ then we get, keeping in mind that $\|a^\eps\|_{L^\infty(\tilde B^{\f32,-}_{2,\eps\nu})}$ is small,
\begin{equation}\label{eq:theta7}
\|\frac{\eps a^\eps}{1+\eps a^\eps}\Delta\theta^\eps\|_{L^1(\tilde B^{s-2,+}_{p,\eps\nu})}
\lesssim \nu^{-1}(\eps\nu)^{\alpha}\|a^\eps\|_{L^\infty(\tilde B^{\f32,-}_{2,\eps\nu})}
\|\theta^\eps\|_{L^1(\tilde B^{\f32,+}_{2,\eps\nu})}.
\end{equation}
Finally,
$$
\begin{aligned}
\|\frac{\eps}{1+\eps a^\eps}[2\mu|Du^\eps|^2\!+\!\lambda (\div u^\eps)^2]\|_{L^1(\dot B^{-\f12}_{2,1})}
&\lesssim\eps (1+\|\eps a^\eps\|_{L^\infty(\dot B^{\f32}_{2,1})})\|\nabla u^\eps\|_{L^2(\dot B^{\f12}_{2,1})}^2,\\
 &\lesssim \eps(1+\nu^{-1}\|a^\eps\|_{L^\infty(\tilde B^{\f32,-}_{2,\eps\nu})})\|u^\eps\|_{L^2(\dot B^{\f32}_{2,1})}^2.
\end{aligned}
$$
At this point, let us notice that for all $z\in\dot B^{-\f12}_{2,1}$ and $\alpha\in[0,1],$
$$\begin{array}{lll}
\|z\|_{\dot B^{-\f12-\alpha,+}_{2,\eps\nu}}&=&\|z^\ell\|_{\dot B^{\f12-\alpha}_{2,1}}+(\eps\nu)^{-1}\|z^h\|_{\dot B^{-\f12-\alpha}_{2,1}}\\&\lesssim&(\eps\nu)^{\alpha-1}\|z\|_{\dot B^{-\f12}_{2,1}}.
\end{array}
$$
Since  $\tilde B^{-\f12-\alpha,+}_{2,\eps\nu}\hookrightarrow \tilde B^{s-2,+}_{p,\eps\nu}$
(with $\alpha=3/p-s$), we thus end up with
\begin{equation}\label{eq:theta8}
\|\frac{\eps}{1+\eps a^\eps}[2\mu|Du^\eps|^2\!+\!\lambda (\div u^\eps)^2]\|_{L^1(\tilde B^{s-2,+}_{p,\eps\nu})}
\lesssim\nu^{-1}(\eps\nu)^\alpha \|u^\eps\|_{L^2(\dot B^{\f32}_{2,1})}^2.
\end{equation}
So putting \eqref{eq:theta1} to \eqref{eq:theta8} together and using \eqref{eq:unif},  we conclude that
\begin{eqnarray}\label{eq:theta9}
&&\nu^{\f12}\|\dT\|_{\tilde L^2(\tilde B^{s-1,+}_{p,\eps\nu})}+\|\dT\|_{\tilde L^\infty(\tilde B^{s-2,+}_{p,\eps\nu})}
\lesssim \|\dT_0\|_{\tilde B^{s-2,+}_{p,\eps\nu}}+M_0\|(\dv,\dT)\||_{\tilde L^2(\tilde B^{s-1,+}_{p,\eps\nu})}\nonumber\\
&&\qquad\qquad+M_0(\eps\nu)^{\alpha}(M_0+1)+M_0\|\nabla\dV\|_{L^2(\tilde B^{s-2,+}_{p,\eps\nu})}
+\|\pa_t\dV\|_{L^1(\tilde B^{s-2,+}_{p,\eps\nu})+\tilde L^2(\tilde B^{s-3,+}_{p,\eps\nu})}.
\end{eqnarray}
Let us now concentrate on the proof of estimates for $\dv.$ We have,
subtracting \eqref{eq:BouS} from \eqref{eq:nonosc} and using \eqref{eq:relation},
$$
\displaylines{\quad
      \!\pa_t\dv-\mu\Delta\dv +\cP(\cP u^\eps\!\cdot\!\nabla\dv+\dv\!\cdot\!\nabla v)
    \!=\!-\frac{\sqrt2}2\cP\bigl(\Theta^\eps\nabla\dV\!+\!\dT\nabla V\!+q^\eps\nabla V^\eps
    - 2q^\eps\nabla b^\eps\bigr)\hfill\cr\hfill
    -\cP\biggl(  u^\eps\cdot\nabla\cQ u^\eps+\cQ u^\eps\cdot\nabla\cP u^\eps+\frac{\eps a^\eps}{1+\eps a^\eps}\cA u^\eps
      -\frac{\eps a^\eps(\theta^\eps-a^\eps)}{1+\eps a^\eps}\nabla a^\eps\biggr)\cdotp\quad}
$$
Therefore, according to Proposition \ref{p:heatestimates} and to the fact that $\cP$ is a self-map on any homogeneous
Besov space, we have
$$
\displaylines{\|\dv\|_{\tilde L^\infty(\tilde B^{s-2,+}_{p,\eps\nu})}+\nu\|\dv\|_{L^1(\tilde B^{s,+}_{p,\eps\nu})}
\lesssim \|\dv_0\|_{\tilde B^{s-2,+}_{p,\eps\nu}}+\|\cP u^\eps\cdot\nabla\dv\|_{L^1(\tilde B^{s-2,+}_{p,\eps\nu})}
\hfill\cr\hfill+\|\dv\cdot\nabla v\||_{L^1(\tilde B^{s-2,+}_{p,\eps\nu})}
+\|\Theta^\eps\nabla\dV\|_{L^1(\tilde B^{s-2,+}_{p,\eps\nu})}+\|\dT\nabla V\|_{L^1(\tilde B^{s-2,+}_{p,\eps\nu})}
 \hfill\cr\hfill+\|q^\eps\nabla V^\eps\|_{L^1(\tilde B^{s-2,+}_{p,\eps\nu})}
   +\|q^\eps\nabla b^\eps\|_{L^1(\tilde B^{s-2,+}_{p,\eps\nu})}
    +\| u^\eps\cdot\nabla\cQ u^\eps\|_{L^1(\tilde B^{s-2,+}_{p,\eps\nu})}
    \hfill\cr\hfill+\|\cQ u^\eps\cdot\nabla\cP u^\eps\|_{L^1(\tilde B^{s-2,+}_{p,\eps\nu})}
        +\|\frac{\eps a^\eps}{1+\eps a^\eps}\cA u^\eps\|_{L^1(\tilde B^{s-2,+}_{p,\eps\nu})}
     +\|\frac{\eps a^\eps(\theta^\eps-a^\eps)}{1+\eps a^\eps}\nabla a^\eps\|_{L^1(\tilde B^{s-2,+}_{p,\eps\nu})}.}
$$
The following inequalities stem from product laws (see Lemma \ref{est-pro}), under the assumption that $s>-1/2$:
\begin{align}\label{eq:v1}
 \|\cP u^\eps\!\cdot\!\nabla\dv\|_{L^1(\tilde B^{s-2,+}_{p,\eps\nu})}&\lesssim
 \|\cP u^\eps\|_{L^2(\dot B^{\f32}_{2,1})}\|\nabla\dv\|_{L^2(\tilde B^{s-2,+}_{p,\eps\nu})}, \\\label{eq:v2}
 \|\dv\cdot\nabla v\||_{L^1(\tilde B^{s-2,+}_{p,\eps\nu})}&\lesssim \|\nabla v\|_{L^2(\dot B^{\f12}_{2,1})} \|\dv\|_{L^2(\tilde B^{s-1,+}_{p,\eps\nu})}.
\end{align}
Next we have, if $s>1/2,$
\begin{align}
   \label{eq:v3}
   \| u^\eps\cdot\nabla\cQ u^\eps\|_{L^1(\tilde B^{s-2,+}_{p,\eps\nu})} &\lesssim
    \| u^\eps\|_{L^2(\dot B^{\f32}_{2,1})} \|\nabla\cQ u^\eps\|_{L^2(\tilde B^{s-2,+}_{p,\eps\nu})},
     \\\label{eq:v4}
 \|\cQ u^\eps\cdot\nabla\cP u^\eps\|_{L^1(\dot B^{s-1}_{p,1})}&\lesssim
  \|\nabla\cP u^\eps\|_{L^2(\dot B^{\f12}_{2,1})}
  \|\cQ u^\eps\|_{L^2(\dot B^{s}_{p,1})},\\\label{eq:v5}
\|\Theta^\eps\nabla\dV\|_{L^1(\tilde B^{s-2,+}_{p,\eps\nu})}&\lesssim \|\Theta^\eps\|_{L^2(\tilde B^{\f12,+}_{\eps\nu})}
\|\nabla\dV\|_{L^2(\dot B^{s-1}_{p,1})},\\\label{eq:v6}
\|\dT\nabla V\|_{L^1(\tilde B^{s-2,+}_{p,\eps\nu})}&\lesssim \|\nabla V\|_{L^2(\dot B^{\f12}_{2,1})}\|\dT\|_{L^2(\tilde B^{s-1,+}_{p,\eps\nu})},\\\label{eq:v7}
\|q^\eps\nabla V^\eps\|_{L^1(\tilde B^{s-2,+}_{p,\eps\nu})}&\lesssim  \|\nabla V^\eps\|_{L^2(\dot B^{\f12}_{2,1})}\|q^\eps\|_{L^2(\tilde B^{s-1,+}_{p,\eps\nu})},
\\\label{eq:v8}
\|q^\eps\nabla b^\eps\|_{L^1(\tilde B^{s-2,+}_{p,\eps\nu})}&\lesssim  \|\nabla b^\eps\|_{L^2(\dot B^{\f12}_{2,1})}\|q^\eps\|_{L^2(\tilde B^{s-1,+}_{p,\eps\nu})}.
\end{align}
So arguing as in the proof of \eqref{eq:theta7}, we get
\begin{equation}\label{eq:v9}
\|\frac{\eps a^\eps}{1+\eps a^\eps}\cA u^\eps\|_{L^1(\tilde B^{s-2,+}_{p,\eps\nu})}
\lesssim \nu^{-1}(\eps\nu)^{\alpha}\|a^\eps\|_{L^\infty(\tilde B^{\f32,-}_{2,\eps\nu})}
\|u^\eps\|_{L^1(\dot B^{\f52}_{2,1})}.
\end{equation}
Finally,
$$
 \|\frac{\eps a^\eps(\theta^\eps-a^\eps)}{1+\eps a^\eps}\nabla a^\eps\|_{L^1(\tilde B^{-\f12-\alpha,+}_{2,\eps\nu})}
 \lesssim \|\nabla a^\eps\|_{L^2(\dot B^{\f12}_{2,1})}\|\theta^\eps-a^\eps\|_{L^2(\tilde B^{\f12,+}_{2,\eps\nu})}\|\eps a^\eps\|_{L^\infty(\dot B^{\f32-\alpha}_{2,1})}.
 $$
 Hence using again that $\tilde B^{-\f12,+}_{2,\eps\nu}\hookrightarrow \tilde B^{s-2,+}_{p,\eps\nu}$
 and \eqref{eq:alpha}, we conclude that
 \begin{align}\label{eq:v10}
  \|\frac{\eps a^\eps(\theta^\eps\!-\!a^\eps)}{1\!+\!\eps a^\eps}\nabla a^\eps\|_{L^1(\tilde B^{s-2,+}_{p,\eps\nu})}
  \hspace{8.5cm}\\\hspace{1cm}
  \lesssim\nu^{-1}
 (\eps\nu)^{\alpha}\|a^\eps\|_{L^\infty(\tilde B^{\f32,-}_{\eps\nu})} \|a^\eps\|_{L^2(\dot B^{\f32}_{2,1})}
(\|\theta^\eps\|_{L^2(\tilde B^{\f12,+}_{2,\eps\nu})}+\|a^\eps\|_{L^2(\dot B^{\f32}_{2,1})}).\nonumber
\end{align}
So putting together inequalities \eqref{eq:v1} to \eqref{eq:v10},  we end up with
$$
\displaylines{\nu\|\dv\|_{L^1(\tilde B^{s,+}_{p,\eps\nu})}+\|\dv\|_{\tilde L^\infty(\tilde B^{s-2,+}_{p,\eps\nu})}
\lesssim  \|\dv_0\|_{\tilde B^{s-2,+}_{p,\eps\nu}}\hfill\cr\hfill+M_0(\|(\dv,\dT)\|_{L^2(\tilde B^{s-1,+}_{p,\eps\nu})}+\|\nabla\dV\|_{L^2(\dot B^{s-1}_{p,1})})+(\eps\nu)^\alpha M_0^2(1+\nu^{-1}M_0).}
$$
Bearing in mind \eqref{eq:theta1}, we thus  see that if $M_0$ is small enough with respect to $\nu,$
\begin{align}\label{eq:convergence}
\nu^{\f12}\|\dT\|_{\tilde L^2(\tilde B^{s-1,+}_{p,\eps\nu})}+\|\dT\|_{\tilde L^\infty(\tilde B^{s-2,+}_{p,\eps\nu})}+
\nu\|\dv\|_{L^1(\tilde B^{s,+}_{p,\eps\nu})}+\|\dv\|_{\tilde L^\infty(\tilde B^{s-2,+}_{p,\eps\nu})}
\lesssim \|(\dT_0,\dv_0)\|_{\tilde B^{s-2,+}_{p,\eps\nu}}\nonumber\\\qquad
+M_0^2(\eps\nu)^\alpha
+M_0\|\nabla\dV\|_{L^2(\dot B^{s-1,+}_{p,1})}
+\|\pa_t\dV\|_{L^1(\tilde B^{s-2,+}_{p,\eps\nu})+L^2(\tilde B^{s-3,+}_{p,\eps\nu})}
\end{align}
 whenever $s>1/2,$ $\,4/p-1/2\leq s\leq 3/p\,$ and $\,2\leq p<\infty.$
This completes the proof of the theorem.


\subsection{The case of smoother data}

In order to improve the results of convergence
(see Remark \ref{r:smoothbis}),
we need  to have higher order  a priori estimates for the linear system \eqref{eq:lhNS}.
In effect, if we want to have convergence in \eqref{eq:comp3} for the norm $\tilde L^{\f{2p}{p-2}}(\dot B^{\f2p-\f12}_{p,1})$
rather than  $\tilde L^{\f{2p}{p-2}}(\tilde B^{\f2p-\f32,+}_{p,\eps\nu})$
then we need $\theta$ to have the same regularity as $b,$ namely  $\dot B^{\f32}_{2,1}.$
So we need in addition that $\theta_0\in\tilde B^{\f32,-}_1$ and, owing to linear coupling,
this will enforce us to take $u_0\in \tilde B^{\f32,-}_1.$

Here we just point out what has to be modified to our previous arguments
so as to handle such data.
Let us start with \eqref{eq:bdtheta}. We concentrate on the high frequency regime.
First we notice that
$$
\pa_t\theta-\tilde\kappa\Delta\theta=-\Lambda d.
$$
Hence standard energy estimates ensure that
$$
\|\Lambda\theta_j(t)\|_{L^2}+\tilde\kappa 2^{2j}\|\Lambda\theta_j\|_{L^1_t(L^2)}
\leq \|\Lambda\theta_j(0)\|_{L^2}+\|\Lambda^2d_j\|_{L^1_t(L^2)}.
$$
Taking advantage of \eqref{eq:esthigh}, we thus get
\begin{equation}\label{eq:esthighbis}
2^j\|\theta_j(t)\|_{L^2}+2^{3j}\|\theta_j\|_{L^1_t(L^2)}\leq C\|(2^jb_j,d_j,2^j\theta_j)(0)\|_{L^2}.
\end{equation}
We also need more regularity for $(b,d).$ This is given by \eqref{eq:esthigh} after multiplying by $2^j$:
\begin{equation}\label{eq:esthighter}
\|(2^{2j}b_j,2^jd_j,\theta_j)(t)\|_{L^2}+\Int_0^t\|(2^{2j}b_j,2^{3j}d_j,2^{2j}\theta_j)\|_{L^2}\,d\tau
\leq C\|(2^{2j}b_j,2^jd_j,\theta_j)(0)\|_{L^2}.
\end{equation}
Arguing as in the proof of Proposition \ref{p:paralinear}, we thus deduce that
$$\displaylines{
\|b\|_{\tilde L^\infty_t(\tilde B^{s+1,-}_1\cap\tilde B^{s+2,-}_1)}+\|(d,w,\theta)\|_{\tilde L^\infty_t(\tilde B^{s+1,-}_1)}
+\int_0^t\bigl(\|b\|_{\tilde B^{s+1,+}_1\cap\tilde B^{s+2,+}_1}+\|(d,w,\theta)\|_{\tilde B^{s+3,-}_1}\bigr)\,d\tau
 \hfill\cr\hfill\leq Ke^{CV(t)}\biggl(\|b_0\|_{\tilde B^{s+1,-}_1\cap\tilde B^{s+2,-}_1}+\|(d_0,w_0,\theta_0)\|_{\tilde B^{s+1,-}_1}
\hfill\cr\hfill +\int_0^te^{-CV(\tau)}\bigl(\|B\|_{\tilde B^{s+1,-}_1\cap B^{s+2,-}_1}+\|(D,W,G)\|_{\tilde B^{s+1,-}_1}\bigr)\,d\tau\biggr)\cdotp}
 $$
Starting from this inequality and following the computations of Subsection \ref{ss:global},
it is easy to get the result of Remark \ref{r:smooth}. Next, resorting to the first inequality of
Proposition \ref{p:strichartz} with $s=1/2$ and to nonlinear estimates,  we get Remark \ref{r:smoothbis}.


\section{The nonconducting case}\label{s:zero}

As pointed out in the introduction, in the case $\kappa=0,$ it is easier to work with~$\cR^\eps.$
The reason why is that  the linearized equations for  $(u^\eps, \cR^\eps)$  are the same as those of the classical
 barotropic Navier-Stokes equations (see next paragraph).
 Apart from this purely technical point and the fact that one has to work with smoother data,
 the overall approach for investigating the  global existence and low Mach number
issues is the same : first we perform the change of variables
\begin{equation}\label{eq:changebis}
(a,u,\cR)(t,x)=\eps(a^\eps,u^\eps,\cR^\eps)(\eps^2\nu t,\eps\nu x)
\quad\hbox{and}\quad \widetilde{V}(t,x)=\eps V^\eps(\eps^2\nu t,\eps\nu x),
\end{equation}
so as to reduce the proof of existence to  the case $\eps=\nu=1,$
and next we take advantage of dispersive properties of the acoustic wave equation, and of
parabolic estimates to establish the convergence to some suitable solution
of the Boussinesq system with no heat conduction (namely \eqref{eq:nBouS}).


\subsection{Linear and paralinear estimates}

 If we  decompose,  as in the heat-conducting case, the velocity field $u$ into its (reduced) potential part $d,$ and its divergence-free part $w,$
then the linearized system about $0$ reads
 \begin{equation}\label{eq:linear0}
  \left\{
    \begin{aligned}
      & \pa_t  a +\Lambda d =0, \\
      &\pa_t d - \Delta d-\Lambda \mathcal{R} =0,\\
      &\pa_t \mathcal{R}  +\Lambda d =0,\\
      &\pa_tw-\tilde\mu\Delta w=0.\end{aligned}\right.
\end{equation}
As in the heat-conducting case, $w$ just fulfills the heat equation.
Next, we notice that $(\cR, d)$ satisfies the linearized equation for the compressible modes of the barotropic
Navier-Stokes equations. Hence, following the method of \cite{D1}, we gather that
for some universal constant $C,$
$$
\begin{array}{lll}
\|(\cR_j,d_j)(t)\|_{L^2}+2^{2j}\Int_0^t \|(\cR_j,d_j)\|_{L^2}\,d\tau\leq C\|(\cR_j,d_j)(0)\|_{L^2}
&\hbox{if}& j\leq 0,\\[1ex]
\|(2^j\cR_j,d_j)(t)\|_{L^2}+\Int_0^t \|(2^j\cR_j,2^{2j}d_j)\|_{L^2}\,d\tau\leq C\|(2^j\cR_j,d_j)(0)\|_{L^2}
&\hbox{if}& j>0.\end{array}
$$
Now, from the first and last equations of \eqref{eq:linear0}, we see that
$$
a_j(t)-\cR_j(t)=a_j(0)-\cR_j(0)\quad\hbox{for all }\ t\in\R^+.
$$
Hence, taking advantage of the above estimate for $\cR_j,$ we get
$$
\max(1,2^j)\|a_j(t)\|_{L^2}\leq C\bigl(\max(1,2^j)\|(a_j(0),\cR_j(0))\|_{L^2}+\|d_{j}(0)\|_{L^2}\bigr).
$$
{}From   those inequalities,  arguing as in the case $\kappa>0,$
one may deduce  a priori estimates for the following paralinearized equations:
 \begin{equation}
  \left\{
    \begin{aligned}
      & \pa_t a+\Lambda d+T_{v^k}\pa_k a=A, \\
      &\pa_t d + T_{v^k}\pa_k d- \Delta d-\Lambda\cR  =D,\\
      &\pa_t \cR  +\Lambda d +T_{v^k}\pa_k \cR=R,\\
      &\pa_t w + T_{v^k}\pa_k w-\tilde\mu\Delta w=W,
    \end{aligned}
  \right.
  \label{eq:lhNSbis0}
  \end{equation}
where the source terms $A,$ $D,$ $R,$ $W$ and the vector field $v$ are given.
\medbreak
More precisely, we have
\begin{prop}\label{p:paralinear0}
Let  $\cV(t):=\int_0^t \|\na v\|_{L^\infty}\,d\tau$. There exists a constant $K$ depending only on $\tilde\mu$
and a universal constant $C$  such that for all $s\in\R,$ the following inequality holds true:
$$\displaylines{
\|(a,\cR)\|_{\tilde L^\infty_t(\tilde B^{s+1,-}_1)}+\|(d,w)\|_{\tilde L^\infty_t(\dot B^s_{2,1})}
+\int_0^t\bigl(\|(d,w)\|_{\dot B^{s+2}_{2,1}}+\|\cR\|_{\tilde B^{s+1,+}_1}\bigr)\,d\tau
 \hfill\cr\hfill\leq Ke^{C\cV(t)}\biggl(\|(a_0,\cR_0)\|_{\tilde B^{s+1,-}_1}+\|(d_0,w_0)\|_{\dot B^s_{2,1}}
\hfill\cr\hfill +\int_0^te^{-C\cV(\tau)}\bigl(\|(A,R)\|_{\tilde B^{s+1,-}_1}+\|(D,W)\|_{\dot B^s_{2,1}}\bigr)\,d\tau\biggr)\cdotp}
 $$
\end{prop}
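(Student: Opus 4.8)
The plan is to follow the proof of Proposition~\ref{p:paralinear} line by line, the only simplification being that there is now no temperature unknown. The starting point is the set of linear estimates just established above: the decoupled heat bound for $w$, the barotropic-type bounds for the compressible pair $(\cR,d)$ borrowed from \cite{D1}, and the algebraic identity relating $a$ to $\cR$ through the first and third equations of \eqref{eq:linear0}. Everything reduces to transporting these bounds to the frequency-localized version of \eqref{eq:lhNSbis0}; the genuine source terms $A,D,R,W$ are inoffensive and absorbed via Duhamel's formula, so the only real issue is the paraconvection terms $T_{v^k}\pa_k\,\cdot$, which do not commute with $\ddj$.

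First I would redo the basic $L^2$ energy estimate for $(\cR_j,d_j):=(\ddj\cR,\ddj d)$. Since the first-order terms are antisymmetric, pairing the localized $\cR$- and $d$-equations with $\cR_j$ and $d_j$ reproduces the linear identity up to the extra quantities $(\ddj(T_{v^k}\pa_k\cR)|\cR_j)_{L^2}$ and $(\ddj(T_{v^k}\pa_k d)|d_j)_{L^2}$, each bounded by inequality~\eqref{eq:paraconv} at the cost of a factor $\|\na v\|_{L^\infty}\sum_{|j'-j|\le N}\|(\cR_{j'},d_{j'})\|_{L^2}$. To capture the damping of $\cR$ and its low-frequency parabolic smoothing, one introduces — exactly as in Step~2 of the $\kappa>0$ analysis — the auxiliary function $\Lambda\cR-d$, which in the paralinearized setting satisfies
$$
\pa_t(\Lambda\cR_j-d_j)+\Lambda\cR_j+\Lambda\ddj(T_{v^k}\pa_k\cR)-\ddj(T_{v^k}\pa_k d)=\Lambda R_j-D_j.
$$
After pairing with $\Lambda\cR_j-d_j$, all the paraconvection contributions are handled by \eqref{eq:paraconv} except the ``bad'' combination $\Lambda\ddj(T_{v^k}\pa_k\cR)-\ddj(T_{v^k}\pa_k d)$, which one rewrites, as in the proof of Proposition~\ref{p:paralinear}, as
$$
\Lambda\ddj(T_{v^k}\pa_k\cR)-\ddj(T_{v^k}\pa_k d)=\ddj T_{v^k}\pa_k(\Lambda\cR-d)+2^j[\phi(2^{-j}D),T_{v^k}]\pa_k\cR,\qquad\phi(\xi):=|\xi|\varphi(\xi),
$$
the first term being controlled by \eqref{eq:paraconv} and the second by the commutator estimate of \cite{BCD} (Lemma~2.97). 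Combining these identities along the lines of Steps~2 to 4 there, and passing to one less derivative in the high-frequency regime via Bernstein's inequality, yields the localized analogues of the linear bounds for $(\cR_j,d_j)$, with the usual additional term $\|\na v\|_{L^\infty}$ times neighbouring dyadic blocks together with the source terms.

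It remains to recover $a$ and $w$. Because $a$ and $\cR$ are paraconvected by the same operator, their difference solves the pure para-transport equation $\pa_t(a-\cR)+T_{v^k}\pa_k(a-\cR)=A-R$; localizing, pairing with $\ddj(a-\cR)$ and invoking \eqref{eq:paraconv} and Gronwall's lemma bounds $\|\ddj(a-\cR)(t)\|_{L^2}$ in terms of $\|\ddj(a_0-\cR_0)\|_{L^2}$, $\int_0^t\|\ddj(A-R)\|_{L^2}$ and lower-order neighbouring blocks, and then $a=(a-\cR)+\cR$ transfers the $\cR$-bounds to $a$ — note that, $a-\cR$ neither decaying nor gaining regularity, this produces only an $\tilde L^\infty_t$ control of $a$ and hence no time-integral term for $a$ in the final inequality. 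The block $w$ obeys the paraconvected heat equation and is treated word for word as in the $\kappa>0$ case. Finally, multiplying the localized estimates by $2^{j(s+1)}\bigl(\min(1,2^j)\bigr)^{-1}$ (for $a,\cR$) and by $2^{js}$ (for $d,w$), summing over $j\in\Z$ and applying Gronwall's lemma in $\cV(t)=\int_0^t\|\na v\|_{L^\infty}\,d\tau$ gives the announced inequality. As already in Proposition~\ref{p:paralinear}, the single delicate point is the treatment of the ``bad'' paraconvection term through the commutator splitting above; everything else is a transcription of the heat-conducting analysis with the $\theta$-equation deleted.
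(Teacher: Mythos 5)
Your argument is correct and coincides with what the paper intends: the paper itself only writes ``arguing as in the case $\kappa>0$'' after stating the linear estimates for \eqref{eq:linear0}, and your outline supplies exactly the expected details --- the energy/auxiliary-function scheme for $(\cR_j,d_j)$ with the paraconvection inequality \eqref{eq:paraconv} and the commutator splitting for the bad term $\Lambda\ddj(T_{v^k}\pa_k\cR)-\ddj(T_{v^k}\pa_k d),$ the para-transport equation for $a-\cR$ to recover the $\tilde L^\infty_t$ bound on $a,$ and the weighted summation followed by Gronwall's lemma. The only cosmetic slip is the mention of ``passing to one less derivative via Bernstein'': in the nonconducting case the high-frequency energy functional already controls $2^j\cR_j$ together with $d_j,$ which is precisely the level of regularity that $\tilde B^{s+1,-}_1$ requires for $\cR,$ so that step from the $\kappa>0$ analysis is superfluous here and should simply be omitted.
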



\subsection{The proof of global existence}

Here, in the case $\eps=\nu=1,$
 we want to prove the existence of a global solution $(a,u,\cR)$ to \eqref{eq:nhNS} with
$$
a\in \wt C(\dot B^{\f12}_{2,1}\cap \dot B^{\f72}_{2,1}),\quad\!\!
u\in \wt C(\dot B^{\f12}_{2,1}\cap \dot B^{\f52}_{2,1})
\cap L^1(\dot B^{\f52}_{2,1}\cap \dot B^{\f92}_{2,1}),\quad\!\!
\cR\in\wt C(\dot B^{\f12}_{2,1}\cap \dot B^{\f72}_{2,1})
\cap L^1(\tilde B^{\f32,+}_{1}\cap \tilde B^{\f72,+}_{1}).
$$
For that, this is mainly a matter of proving a priori estimates in this space, taking for granted
the existence of a solution.
Indeed,  the a priori estimates that we are going to prove below would be the same for the system
truncated by means of the Friedrichs method (see e.g. \cite{BCD}, Chap. 10 for the related case of
the barotropic Navier-Stokes equation).

More precisely, we have to bound:
\begin{equation}\label{eq:X0}
X:=\|(a,\cR)\|_{\tilde L^\infty(\dot B^{\f12}_{2,1}\cap \dot B^{\f72}_{2,1})}+
\|u\|_{\tilde L^\infty(\dot B^{\f12}_{2,1}\cap \dot B^{\f52}_{2,1})}+\|u\|_{L^1(\dot B^{\f52}_{2,1}\cap \dot B^{\f92}_{2,1})}
+\|\cR\|_{L^1(\tilde B^{\f32,+}_{1}\cap \tilde B^{\f72,+}_{1})}.
\end{equation}

As we have in mind to apply Proposition \ref{p:paralinear0} (twice: once with $s=3/2$ and once with $s=7/2$),
we rewrite  \eqref{eq:nhNS} as follows:
 \begin{equation}  \label{eq:lnhNS}
  \left\{
    \begin{aligned}
      & \pa_t  a +T_{u^k}\pa_k a+\Lambda d =A, \\
      &\pa_t d + T_{u^k}\pa_k d- \Delta d-\Lambda \mathcal{R} =D,\\
      &\pa_t \mathcal{R}  +\Lambda d +T_{u^k}\pa_k  \mathcal{R} =R,\\
      &\pa_t w+T_{u^k}\pa_k w-\mu\Delta w= W,
    \end{aligned}
  \right.
  \end{equation}
  where \beno A&:=&T_{u^k}\pa_k a -u\cdot\na a-a\div u, \\
  D&:=& T_{u_k}\pa_k d-\Lambda^{-1}\div(u\cdot\na u)
   -\Lambda^{-1}\div\bigg[\f{a}{1+a}\bigl(\tilde\mu\Delta u+(\tilde\lambda+\tilde\mu)\nabla\div u\bigr)- \f{a\na (\mathcal{R}+\widetilde{V}) }{(1+a)}\bigg], \\ R&:=&T_{u^k}\pa_k \mathcal{R}-u\cdot\na \mathcal{R}-\mathcal{R}\div u-\pa_t \widetilde{V}-\div(\widetilde{V}u) +  [2\tilde\mu|Du|^2+\tilde\lambda (\div u)^2],\\
 W&:=&T_{u^k}\pa_k w-\cP(u\cdot\na u)-\cP\bigg[\f{a}{1+ a}\bigl(\tilde\mu\Delta u+(\tilde\lambda+\tilde\mu)\nabla\div u\bigr)-\f{a\na(\mathcal{R}+\widetilde{V})}{(1+a)} \bigg]\cdotp\eeno

According to Proposition \ref{p:paralinear0}, we thus have to
 bound $A,R$ in $L^1(\dot B^{\f12}_{2,1}\cap \dot B^{\f72}_{2,1})$
and  $D,W$ in $L^1(\dot B^{\f12}_{2,1}\cap \dot B^{\f52}_{2,1}).$
We shall assume throughout that $\|a\|_{L^\infty(\R^+\times\R^3)}$ is small.

\subsubsection*{Bounds for $\|A\|_{L^1(\dot B^{\f12}_{2,1}\cap\dot B^{\f72}_{2,1})}$}

Recall that
$$
A=-T'_{\pa_ka}u^k-a\div u.
$$
Using standard product laws for the paraproduct and remainder (see e.g. \cite{BCD}), we get
\begin{align}\label{eq:G1a}
\|T'_{\pa_ka}u^k\|_{L^1(\dot B^{\f12}_{2,1})}&\lesssim\|\nabla a\|_{L^\infty(\dot B^{-\f12}_{2,1})}\|u\|_{L^1(\dot B^{\f52}_{2,1})},\\\label{eq:G1b}
\|T'_{\pa_ka}u^k\|_{L^1(\dot B^{\f72}_{2,1})}&\lesssim\|\nabla a\|_{L^\infty(\dot B^{\f12}_{2,1})}\|u\|_{L^1(\dot B^{\f92}_{2,1})},\\\label{eq:G1c}
\|a\div u\|_{L^1(\dot B^{\f12}_{2,1})}&\lesssim\|a\|_{L^\infty(\dot B^{\f12}_{2,1})}\|\div u\|_{L^1(\dot B^{\f32}_{2,1})},\\\label{eq:G1d}
\|a\div u\|_{L^1(\dot B^{\f72}_{2,1})}&\lesssim\|a\|_{L^\infty(\dot B^{\f32}_{2,1})}\|\div u\|_{L^1(\dot B^{\f72}_{2,1})}
+\|a\|_{L^\infty(\dot B^{\f72}_{2,1})}\|\div u\|_{L^1(\dot B^{\f32}_{2,1})}.
\end{align}
Hence
\begin{equation}\label{eq:G1}
\|A\|_{L^1(\dot B^{\f12}_{2,1}\cap\dot B^{\f72}_{2,1})}\lesssim \|a\|_{L^\infty(\dot B^{\f12}_{2,1}\cap \dot B^{\f72}_{2,1})}
\|u\|_{L^1(\dot B^{\f52}_{2,1}\cap \dot B^{\f92}_{2,1})}.
\end{equation}

\subsubsection*{Bounds for $\|D\|_{L^1(\dot B^{\f12}_{2,1}\cap\dot B^{\f52}_{2,1})}$
and $\|W\|_{L^1(\dot B^{\f12}_{2,1}\cap\dot B^{\f52}_{2,1})}$}

We may rewrite $D$ as follows:
$$
 D= [T_{u_k},\Lambda^{-1}\pa_i]\pa_k u^i-\Lambda^{-1}\pa_iT'_{\pa_ku^i}u^k
   -\Lambda^{-1}\div\bigg[\f{a}{1+a}\bigl(\tilde\mu\Delta u+(\tilde\lambda+\tilde\mu)\nabla\div u- \na (\mathcal{R}+V)\bigr)\bigg]\cdotp
   $$
The first two terms of $D$ may be treated as in \eqref{eq:p4}: we get for any $s>0,$
\begin{equation}\label{eq:G2a}
\| [T_{u_k},\Lambda^{-1}\pa_i]\pa_k u^i-\Lambda^{-1}\pa_iT'_{\pa_ku^i}u^k\|_{\dot B^{s}_{2,1}}\lesssim\|\nabla u\|_{L^\infty}\|u\|_{\dot B^s_{2,1}}.
\end{equation}
Next, classical composition and  tame estimates yield for $s>0,$
$$
\| \frac a{1+a}\cA u\|_{\dot B^s_{2,1}}\lesssim \|a\|_{L^\infty}\|\cA u\|_{\dot B^s_{2,1}}
+\|\cA u\|_{L^\infty}\|a\|_{\dot B^s_{2,1}}.
$$
Hence, using the embedding $\dot B^{\f32}_{2,1}\hookrightarrow L^\infty,$ we easily get
\begin{equation}\label{eq:G2b}
\| \frac a{1+a}\cA u\|_{L^1(\dot B^{\f12}_{2,1}\cap \dot B^{\f52}_{2,1})}
\lesssim \|a\|_{L^\infty(\dot B^{\f12}_{2,1}\cap \dot B^{\f52}_{2,1})}
 \|u\|_{L^1(\dot B^{\f52}_{2,1}\cap \dot B^{\f92}_{2,1})}.
\end{equation}
Finally, we have
\begin{eqnarray}\label{eq:G2c}
&&\| \frac a{1+a} \na (\mathcal{R}+\widetilde{V})\|_{L^1(\dot B^{\f12}_{2,1})}\lesssim\|a\|_{L^\infty(\dot B^{\f12}_{2,1})}\|\nabla(\cR+\widetilde{V})\|_{L^1(\dot B^{\f32}_{2,1})},\\
\label{eq:G2d}&&\| \frac a{1+a} \na (\mathcal{R}+\widetilde{V})\|_{L^1(\dot B^{\f52}_{2,1})}\lesssim
\|a\|_{L^\infty(\dot B^{\f32}_{2,1})}\|\nabla(\cR+\widetilde{V})\|_{L^1(\dot B^{\f52}_{2,1})}\\&&\hspace{7cm}
+\|a\|_{L^\infty(\dot B^{\f52}_{2,1})}\|\nabla(\cR+\widetilde{V})\|_{L^1(\dot B^{\f32}_{2,1})}.\nonumber
\end{eqnarray}
So putting \eqref{eq:G2a} to \eqref{eq:G2d} together, we get
\begin{equation}\label{eq:G2}
\begin{aligned}&\|D\|_{L^1(\dot B^{\f12}_{2,1}\cap \dot B^{\f52}_{2,1})}\lesssim
\|u\|_{L^1(\dot B^{\f52}_{2,1})}\|u\|_{L^\infty(\dot B^{\f12}_{2,1}\cap \dot B^{\f52}_{2,1})}
\\&\hspace{1cm}+\|a\|_{L^\infty(\dot B^{\f12}_{2,1}\cap\dot B^{\f52}_{2,1})}
\bigl(\|u\|_{L^1(\dot B^{\f52}_{2,1}\cap\dot B^{\f92}_{2,1})}+\|\cR\|_{L^1(\dot B^{\f52}_{2,1}\cap \dot B^{\f72}_{2,1})}
+\|\nabla \widetilde{V}\|_{L^1(\dot B^{\f32}_{2,1}\cap \dot B^{\f52}_{2,1})}\bigr).
\end{aligned}\end{equation}
It is clear that $W$ satisfies exactly the same inequality.

\subsubsection*{Bounds for $\|R\|_{L^1(\dot B^{\f12}_{2,1}\cap\dot B^{\f72}_{2,1})}$}

Recall that
$$ R=-T'_{\pa_k\cR}u^k-\mathcal{R}\div u-\pa_t \widetilde{V}-\div(\widetilde{V}u) +  [2\tilde\mu|Du|^2+\tilde\lambda (\div u)^2].$$
First we have for any $s>0,$
$$
\|T'_{\pa_k\cR}u^k\|_{\dot B^s_{2,1}}\lesssim \|\nabla\cR\|_{L^\infty}\|u\|_{\dot B^s_{2,1}}.
$$
Hence
\begin{equation}\label{eq:G3a}
\begin{aligned}
&\|T'_{\pa_k\cR}u^k\|_{L^1(\dot B^{\f12}_{2,1})}\lesssim \|\cR\|_{L^1(\dot B^{\f52}_{2,1})}\|u\|_{L^\infty(\dot B^{\f12}_{2,1})},\\
&\|T'_{\pa_k\cR}u^k\|_{L^1(\dot B^{\f72}_{2,1})}\lesssim \|\cR\|_{L^\infty(\dot B^{\f52}_{2,1})}\|u\|_{L^1(\dot B^{\f72}_{2,1})}.
\end{aligned}
\end{equation}
Next, product estimates imply that
\begin{equation}\label{eq:G3b}\begin{aligned}
&\|\cR\div u\|_{L^1(\dot B^{\f12}_{2,1})}\lesssim \|\cR\|_{L^2(\dot B^{\f32}_{2,1})}\|u\|_{L^2(\dot B^{\f32}_{2,1})},\\
&\|\cR\div u\|_{L^1(\dot B^{\f72}_{2,1})}\lesssim \|\cR\|_{L^\infty(\dot B^{\f32}_{2,1})}\|u\|_{L^1(\dot B^{\f92}_{2,1})}
+\|u\|_{L^1(\dot B^{\f52}_{2,1})}\|\cR\|_{L^\infty(\dot B^{\f72}_{2,1})}.
\end{aligned}
\end{equation}
We also have
\begin{align}\label{eq:G3c}
\|\div(\widetilde{V}u)\|_{L^1(\dot B^{\f12}_{2,1})}&\lesssim\|\widetilde{V}\|_{L^\infty(\dot B^{\f12}_{2,1})}\|u\|_{L^1(\dot B^{\f52}_{2,1})}
+\|\widetilde{V}\|_{L^1(\dot B^{\f52}_{2,1})}\|u\|_{L^\infty(\dot B^{\f12}_{2,1})}\\\label{eq:G3d}
\|\div(\widetilde{V}u)\|_{L^1(\dot B^{\f72}_{2,1})}&\lesssim\|\widetilde{V}\|_{L^\infty(\dot B^{\f32}_{2,1})}\|u\|_{L^1(\dot B^{\f92}_{2,1})}
+\|\widetilde{V}\|_{L^1(\dot B^{\f92}_{2,1})}\|u\|_{L^\infty(\dot B^{\f32}_{2,1})}.
\end{align}
And finally,
\begin{align}\label{eq:G3e}
\|\nabla u\otimes\nabla u\|_{L^1(\dot B^{\f12}_{2,1})}&\lesssim\|\nabla u\|_{L^1(L^\infty)}\|\nabla u\|_{L^\infty(\dot B^{\f12}_{2,1})},\\
\label{eq:G3f}
\|\nabla u\otimes\nabla u\|_{L^1(\dot B^{\f72}_{2,1})}&\lesssim\|\nabla u\|_{L^\infty(L^\infty)}\|\nabla u\|_{L^1(\dot B^{\f72}_{2,1})}.
\end{align}
Therefore, combining inequalities \eqref{eq:G3a} to \eqref{eq:G3f}, and using embedding, we end up with
\begin{equation}\label{eq:G3}
\begin{aligned}
\|R\|_{L^1(\dot B^{\f12}_{2,1}\cap\dot B^{\f72}_{2,1})}\lesssim
\|u\|_{L^1(\dot B^{\f52}_{2,1}\cap \dot B^{\f92}_{2,1})}\|u\|_{L^\infty(\dot B^{\f12}_{2,1}\cap \dot B^{\f52}_{2,1})}
+\|\pa_t\widetilde{V}\|_{L^1(\dot B^{\f12}_{2,1}\cap\dot B^{\f72}_{2,1})}\hspace{1.2cm}\\
+\|\widetilde{V}\|_{L^1(\dot B^{\f52}_{2,1}\cap\dot B^{\f92}_{2,1})}\|u\|_{L^\infty(\dot B^{\f12}_{2,1}\cap \dot B^{\f32}_{2,1})}
+\|u\|_{L^1(\dot B^{\f52}_{2,1}\cap\dot B^{\f92}_{2,1})}\|\widetilde{V}\|_{L^\infty(\dot B^{\f12}_{2,1}\cap \dot B^{\f32}_{2,1})}\\
\!\!\!\!\!+\|\cR\|_{L^\infty(\dot B^{\f12}_{2,1}\cap\dot B^{\f72}_{2,1})}\|u\|_{L^1(\dot B^{\f52}_{2,1}\cap\dot B^{\f92}_{2,1})}
\!\!+\! \|\cR\|_{L^2(\dot B^{\f32}_{2,1})}\|u\|_{L^2(\dot B^{\f32}_{2,1})}
\!\!+\! \|\cR\|_{L^1(\dot B^{\f52}_{2,1})}\|u\|_{L^\infty(\dot B^{\f12}_{2,1})}.\!\!\!\!\!\!\!
\end{aligned}
\end{equation}

Putting \eqref{eq:G1}, \eqref{eq:G2} and \eqref{eq:G3} together, one may finally conclude that
for some constant $K$ depending only on $\tilde\lambda$ and $\tilde\mu,$ we have
$$X\leq K\Bigl(X(0)+X^2+\bigl(\|\widetilde{V}\|_{L^\infty(\dot B^{\f12}_{2,1}\cap \dot B^{\f32}_{2,1})}
+\|\widetilde{V}\|_{L^1(\dot B^{\f52}_{2,1}\cap\dot B^{\f92}_{2,1})}\bigr)X
+\|\pa_t\widetilde{V}\|_{L^1(\dot B^{\f12}_{2,1}\cap\dot B^{\f72}_{2,1})}\Bigr).
$$
{}From this, we see that if $X(0)$ and the terms pertaining to $\widetilde{V}$ are small enough, then
\begin{equation}\label{eq:Xest}
X\leq 2K\Bigl(X(0)+\|\pa_t\widetilde{V}\|_{L^1(\dot B^{\f12}_{2,1}\cap\dot B^{\f72}_{2,1})}\Bigr)\cdotp
\end{equation}
Going back to the original variables according to \eqref{eq:changebis}, we then
get the global existence part of Theorem \ref{th:main3}, for any $\eps>0.$


\subsection{The proof of convergence}

As in the case where $\kappa>0,$ we first show that $(\cQ u^\eps,\cR^\eps)$
goes to $0,$ a consequence of Strichartz estimates, then establish that $(\cP u^\eps,\Theta^\eps)$
goes to the solution $(v,\Theta)$ of the Boussinesq system \eqref{eq:nBouS}.

\subsubsection{Convergence to $0$ for $(\cQ u^\eps,\cR^\eps)$}

It suffices to prove  dispersion estimates in the case $\eps=1.$ The change
of variable \eqref{eq:changebis} will provide us with decay estimates in the general case.
Now, the system  for $(\cQ u,\cR)$ reads
$$\left\{
\begin{aligned}&\pa_t\cQ u+\nabla\cR=-\cQ(u\cdot\nabla u)-\cQ\biggl(\frac{\cA u}{1+a}\biggr)+\cQ\biggl(\frac a{1+a}\nabla(f+\cR)\biggr)=:\bH_1,\\
&\pa_t\cR+\div\cQ u=-\pa_tV-\div((V+\cR)u)-2\mu|Du|^2-(\lambda+\mu)(\div u)^2=:\bH_2.
\end{aligned}\right.
$$
Therefore, Strichartz estimates imply that for all $p\in[2,\infty),$
\begin{equation}\label{eq:stric1}
\|(\cQ u,\cR)\|_{\wt L^{\frac{2p}{p-2}}(\dot B^{\frac2p-\frac12}_{p,1})}\lesssim
\|(\cQ u_0,\cR_0)\|_{\dot B^{\f12}_{2,1}}+\|(\bH_1,\bH_2)\|_{L^1(\dot B^{\f12}_{2,1})}.
\end{equation}
So it is only a matter of bounding $\bH_1$ and $\bH_2$ in $L^1(\dot B^{\f12}_{2,1}),$
which may be done by using standard results of continuity in Besov
spaces and the fact that $\cQ$ is an homogeneous multiplier of degree $0.$
More precisely, we have
$$
\begin{aligned}
\|\cQ(u\cdot\nabla u)\|_{L^1(\dot B^{\f12}_{2,1})}&\lesssim\|u\|_{L^\infty(\dot B^{\f12}_{2,1})}
\|u\|_{L^1(\dot B^{\f52}_{2,1})},\\
\| \cQ\biggl(\frac{\cA u}{1+a}\biggr)   \|_{L^1(\dot B^{\f12}_{2,1})}&\lesssim
(1+\|a\|_{L^\infty(\dot B^{\f32}_{2,1})})\|u\|_{L^1(\dot B^{\f52}_{2,1})},\\
\|\cQ\Bigl(\frac a{1+a}\nabla(V+\cR)\Bigr)\|_{L^1(\dot B^{\f12}_{2,1})}&\lesssim
\|a\|_{L^\infty(\dot B^{\f12}_{2,1})}(\|\nabla V\|_{L^1(\dot B^{\f32}_{2,1})}+\|\nabla\cR\|_{L^1(\dot B^{\f32}_{2,1})}),\\
\|\div((V+\cR)u)\|_{L^1(\dot B^{\f12}_{2,1})}&\lesssim\|u\|_{L^1(\dot B^{\f52}_{2,1})}\|V+\cR\|_{L^\infty(\dot B^{\f12}_{2,1})}
+\|u\|_{L^\infty(\dot B^{\f12}_{2,1})}\|V+\cR\|_{L^1(\dot B^{\f52}_{2,1})},\\
\|\nabla u\otimes \nabla u\|_{L^1(\dot B^{\f12}_{2,1})}&\lesssim\|u\|_{L^1(\dot B^{\f52}_{2,1})}\|u\|_{L^\infty(\dot B^{\f32}_{2,1})}.
\end{aligned}
$$
Therefore, if we set
$$
C_0=\!\|(a_0,\cR_0)\|_{\dot B^{\f12}_{2,1}\cap\dot B^{\f72}_{2,1}}\!+\|u_0\|_{B^\f12_{2,1}\cap\dot B^\f52_{2,1}}
\!+ \|\pa_tV \|_{L^1(\dot B^\f12_{2,1}\cap\dot B^\f72_{2,1})},
$$
then plugging the above inequalities in \eqref{eq:stric1} and using \eqref{eq:Xest} leads to
$$
\|(\cQ u,\cR)\|_{\tilde L^{\frac{2p}{p-2}}(\dot B^{\frac2p-\frac12}_{p,1})}\leq KC_0\quad\hbox{for all }\ p\in[2,\infty).
$$
{}From \eqref{eq:Xest}, we also know that $(\cQ u,\cR)$ is  bounded by $KC_0$  in $L^1(\dot B^{\f52}_{2,1}).$ Hence using interpolation exactly
as in the case $\kappa>0$ leads to
$$
\|(\cQ u,\cR)\|_{\tilde L^{2}(\dot B^{s}_{p,1})}\leq KC_0\quad\hbox{for all }\ p\geq2\quad\hbox{and}\quad
s\in[-1/2+4/p,3/p].
$$
Now, going back to the original variables, we gather that for $\eps>0,$ we have
\begin{align}\label{eq:stric0}
\|(\cQ u^\eps,\cR^\eps)\|_{\tilde L^{\frac{2p}{p-2}}(\dot B^{\frac2p-\frac12}_{p,1})}&\leq KC_0^\eps\eps^{\frac12-\frac1p}\quad\hbox{if }\ 2\leq p<\infty,\\
\nu^{\f12}\|(\cQ u^\eps,\cR^\eps)\|_{\tilde L^{2}(\dot B^{s}_{p,1})}&\leq
KC_0^\eps(\eps\nu)^{\f3p-s}\quad\hbox{for all }\ p\geq2\ \hbox{ and }\
s\in[-1/2+4/p,3/p].
\end{align}
with $C_0^\eps$ defined in the statement of Theorem \ref{th:main3}.


\subsubsection{Global existence of a solution to  \eqref{eq:nBouS}}

Under the assumption that \eqref{eq:smallnBouS}, the existence of a global solution $(\Theta,v)$ to
\eqref{eq:nBouS} satisfying \eqref{eq:nBouS1} is an easy modification of the
corresponding proof for the standard incompressible Navier-Stokes equations, combined with
the following a priori estimate for the transport equation (see e.g. \cite{BCD}, Chap. 3):
$$
\|\Theta\|_{\tilde L^\infty_T(\dot B^{\f12}_{2,1})}\leq \|\Theta_0\|_{\dot B^{\f12}_{2,1}}\exp\biggl(\int_0^T\|v\|_{\dot B^{\f52}_{2,1}}\,dt\biggr)\cdotp
$$
Indeed, using once again Proposition \ref{p:heatestimates} and product estimates, we see that
$$
\displaylines{\|u\|_{\tilde L^\infty_T(\dot B^{\f12}_{2,1})}+\mu \|u\|_{L^1_T(\dot B^{\f52}_{2,1})}
\lesssim \|u_0\|_{\dot B^{\f12}_{2,1}}+ \|u\|_{L^\infty_T(\dot B^{\f12}_{2,1})}
\|u\|_{L^1_T(\dot B^{\f52}_{2,1})}
+\|\Theta\|_{L^\infty_T(\dot B^{\f12}_{2,1})} \|\nabla V\|_{L^1_T(\dot B^{\f32}_{2,1})}.}
$$
Hence if \eqref{eq:smallnBouS} is fulfilled then  one may close the a priori estimates globally in time.


\subsubsection{Convergence of  $(\cP u^\eps,\Theta^\eps)$}

Let us first notice that (recall that $\Theta^\eps=a^\eps-\cR^\eps-V^\eps$)
$$\begin{array}{lll}
\cP\biggl(\frac{a^\eps}{1+\eps a^\eps}\nabla(V^\eps\!+\!\cR^\eps)\biggr)&=&\cP\biggl( a^\eps\nabla (V^\eps\!+\!\cR^\eps)\biggr)-\cP\biggl(a^\eps\frac{\eps a^\eps}{1+\eps a^\eps}\nabla (V^\eps\!+\!\cR^\eps)\biggr)\\
&=&\cP\bigl(\Theta^\eps\nabla(V^\eps+\cR^\eps)\bigr)-\cP\biggl(a^\eps\frac{\eps a^\eps}{1+\eps a^\eps}\nabla(V^\eps\!+\!\cR^\eps)\biggr).
\end{array}
$$
Therefore the system for $(\dT,\dv):=(\Theta^\eps-\Theta,\cP u^\eps-v)$ writes
$$\left\{
\begin{array}{l}
\pa_t\dT+\cP u^\eps\cdot\nabla\dT=-\dv\cdot\nabla\Theta-\cQ u^\eps\cdot\nabla\Theta^\eps-\Theta^\eps\div\cQ u^\eps
-\eps\bigl(2\mu|Du^\eps|^2+\lambda(\div u^\eps)^2\bigr),\\[2ex]
\pa_t\dv-\mu\Delta\dv+\cP(v\cdot\nabla\dv)+\cP(\dv\cdot\nabla\cP u^\eps)
=\cP(\dT\nabla V+\Theta^\eps\nabla\dV+\Theta^\eps\nabla\cR^\eps)\\
\hspace{3.8cm}-\cP\biggl(\cQ u^\eps\cdot\nabla\cP u^\eps+ u^\eps\cdot\nabla\cQ u^\eps+\frac{\eps a^\eps}{1+\eps a^{\eps}}\bigl(\cA u^\eps
+a^\eps\nabla(V^\eps\!+\!\cR^\eps) \bigr)\biggr)\cdotp
\end{array}\right.
$$
In contrast with the heat-conducting case, we do not know how to prove convergence
\emph{globally in time}. This is due to the fact that  some terms
in the right-hand side of the equations for $(\dT,\dv)$ decay to $0$ \emph{only in $L^2$-in time spaces}
and that $\dT$ satisfies a mere transport equation (hence the r.h.s. should be bounded in
\emph{$L^1$-in-time space} if we want to get  a time independent bound for $\dT$).

We claim nevertheless that $\Theta^\eps\rightarrow\Theta$ in $\tilde L^\infty_{loc}(\dot B^{s-2}_{p,1})$ with $s$ as in the
previous step, and that $\cP u^\eps\rightarrow v$ in
$$\bigl(\tilde L^\infty_{loc}(\dot B^{s-1}_{p,1})\cap \tilde L^2_{loc}(\dot B^s_{p,1})\bigr)+
\bigl(\tilde L^\infty_{loc}(\dot B^{s-2}_{p,1})\cap L^1_{loc}(\dot B^s_{p,1})\bigr).
$$
Let us first examine $\dT.$
Denoting by ${\mathbb K}_1$ the r.h.s. of the equation for $\dT,$
standard estimates for the transport equation ensure that, if $s>-1/2$
then we have for all $T\geq0,$
\begin{equation}\label{eq:dT0}
\|\dT\|_{\tilde L^\infty_T(\dot B^{s-2}_{p,1})}
\leq  \exp\biggl(\int_0^T\|\nabla\cP u^\eps\|_{\dot B^{\f32}_{2,1}}\,dt\biggr)\biggl(\|\dT_0\|_{\dot B^{s-2}_{p,1}}
+\int_0^T\|{\mathbb K}_1\|_{\dot B^{s-2}_{p,1}}\,dt\biggr)\cdotp
\end{equation}
Product laws give if, in addition, $s>1/2,$
$$
\begin{array}{lll}
\|\dv\cdot\nabla\Theta\|_{\dot B^{s-2}_{p,1}}&\lesssim&\|\dv\|_{\dot B^s_{p,1}}\|\nabla\Theta\|_{\dot B^{-\f12}_{2,1}},\\[1.5ex]
\|\cQ u^\eps\cdot\nabla\Theta^\eps \|_{\dot B^{s-2}_{p,1}}&\lesssim&\|\cQ u^\eps\|_{\dot B^s_{p,1}}\|\nabla\Theta^\eps\|_{\dot B^{-\f12}_{2,1}},\\[1.5ex]
\|\Theta^\eps\div\cQ u^\eps \|_{\dot B^{s-2}_{p,1}}&\lesssim&\|\div\cQ u^\eps\|_{\dot B^{s-1}_{p,1}}\|\Theta^\eps\|_{\dot B^{\f12}_{2,1}}.
\end{array}
$$
For the last term of ${\mathbb K}_1,$ we use the fact that
the product maps $\dot B^{-\f12}_{2,1}\times \dot B^{\f32-\alpha}_{2,1}$ in $\dot B^{-\f12-\alpha}_{2,1}$ if $0\leq\alpha<1.$
Hence using the embedding $\dot B^{-\f12-\alpha}_{2,1}\hookrightarrow\dot B^{s-2}_{p,1}$ with $\alpha=3/p-s,$ we get
$$
\|2\mu|Du^\eps|^2+\lambda(\div u^\eps)^2\|_{\dot B^{s-2}_{p,1}}\lesssim \|u^\eps\|_{\dot B^{\f12}_{2,1}}\|u^\eps\|_{\dot B^{\f52-\alpha}_{2,1}}.
$$
Inserting those inequalities in \eqref{eq:dT0} and keeping in mind that $\nabla\cP u^\eps$ is uniformly
bounded in $L^1(\dot B^{\f32}_{2,1}),$ we get for any $s\in[-\f12+\f4p,\f3p]\cap(\f12,\infty)$:
$$
\displaylines{\quad\|\dT\|_{\tilde L^\infty_T(\dot B^{s-2}_{p,1})}
\lesssim \|\dT_0\|_{\dot B^{s-2}_{p,1}}+\int_0^T\|\dv\|_{\dot B^s_{p,1}}\|\Theta\|_{\dot B^{\f12}_{2,1}}\,dt
\hfill\cr\hfill+\int_0^T\Bigl(\|\cQ u^\eps\|_{\dot B^s_{p,1}}\|\Theta^\eps\|_{\dot B^{\f12}_{2,1}}
+\eps \|u^\eps\|_{\dot B^{\f12}_{2,1}}\|u^\eps\|_{\dot B^{\f52-\alpha}_{2,1}}\Bigr)dt,}
$$
whence
\begin{align}\label{eq:dT}
&\|\dT\|_{\tilde L^\infty_T(\dot B^{s-2}_{p,1})}
\lesssim \|\dT_0\|_{\dot B^{s-2}_{p,1}}
\\&\qquad+(1+T^{\f12})\|\Theta_0\|_{\dot B^{\f12}_{2,1}}\|\dv\|_{\tilde L^2_T(\dot B^s_{p,1})+L^1_T(\dot B^s_{p,1})}
+\nu^{-1}(C_0^\eps)^2((\nu T)^{\f12}(\eps\nu)^\alpha+\eps\nu (\nu T)^{\f\alpha2}).\!\!\!\!\nonumber
\end{align}
In order to bound $\dv,$ we shall make use once again of the parabolic estimates given by
Proposition \ref{p:heatestimates}.  The main difficulty here is that some
terms of the r.h.s.  ${\mathbb K}_2$  of the equation for $\dv$ cannot be bounded in global $L^1$-in-time spaces.
Hence we shall use the following inequality
which may be easily deduced from Proposition \ref{p:heatestimates} (we do not
track the dependency with respect to $\mu$):
$$
\|\dv\|_{\tilde L^\infty_T(\dot B^{s-1}_{p,1}+\dot B^{s-2}_{p,1})}
+\|\dv\|_{\tilde L^2_T(\dot B^s_{p,1})+L^1_T(\dot B^s_{p,1})}\lesssim
\|\dv_0\|_{\dot B^{s-1}_{p,1}+\dot B^{s-2}_{p,1}}+\|{\mathbb K}_2\|_{\wt L^2_T(\dot B^{s-2}_{p,1})+L_T^1(\dot B^{s-2}_{p,1}+\dot B^{s-1}_{p,1})}.
$$
Now, from product estimates in Besov spaces, we get
$$\begin{array}{lll}
\|v\cdot\nabla\dv\|_{L^1_T(\dot B^{s-1}_{p,1}+\dot B^{s-2}_{p,1})}&\lesssim&
(\|v\|_{ L^2_T(\dot B^{\f32}_{2,1})}+\|v\|_{L^\infty_T(\dot B^{\f12}_{2,1})})\|\dv\|_{  L^2_T(\dot B^s_{p,1})+L^1_T(\dot B^{s}_{p,1})},\\[2ex]
\|\dv\cdot\nabla\cP u^\eps\|_{L^1_T(\dot B^{s-1}_{p,1}+\dot B^{s-2}_{p,1})}&\lesssim&
\|\nabla\cP u^\eps\|_{L^1_T(\dot B^{\f32}_{2,1})}\|\dv\|_{L^\infty_T(\dot B^{s-1}_{p,1}+\dot B^{s-2}_{p,1})},\\[2ex]
\|\dT\nabla V\|_{\dot B^{s-2}_{p,1}}&\lesssim&\|\nabla V\|_{\dot B^{\f32}_{2,1}}\|\dT\|_{\dot B^{s-2}_{p,1}},\\[2ex]
\|\Theta^\eps\nabla\dV\|_{\tilde L^2_T(\dot B^{s-2}_{p,1})}&\lesssim&\|\Theta^\eps\|_{\tilde L^\infty_T(\dot B^{\f12}_{2,1})}
\|\nabla\dV\|_{\tilde L^2_T(\dot B^{s-1}_{p,1})},\\[2ex]
\|\Theta^\eps\nabla\cR^\eps\|_{\tilde L^2_T(\dot B^{s-2}_{p,1})}&\lesssim&\|\Theta^\eps\|_{\tilde L^\infty_T(\dot B^{\f12}_{2,1})}
\|\nabla\cR^\eps\|_{\tilde L^2_T(\dot B^{s-1}_{p,1})},\\[2ex]
\|\cQ u^\eps\cdot\nabla\cP u^\eps\|_{L^1_T(\dot B^{s-1}_{p,1})}&\lesssim&
\|\nabla\cP u^\eps\|_{  L^2_T(B^{\f12}_{2,1})}\|\cQ u^\eps\|_{L^2_T(\dot B^{s}_{p,1})},\\[2ex]
\| u^\eps\cdot\nabla\cQ u^\eps\|_{L^1_T(\dot B^{s-1}_{p,1})}&\lesssim&
\| u^\eps\|_{  L^2_T(B^{\f32}_{2,1})}\|\nabla\cQ u^\eps\|_{  L^2_T(\dot B^{s-1}_{p,1})},\end{array}$$
and arguing as in the proof of \eqref{eq:theta7},
$$\begin{array}{lll}\|\frac{\eps a^\eps}{1+\eps a^{\eps}}\cA u^\eps\|_{L^1_T(\dot B^{s-1}_{p,1})}&\lesssim&
\|\eps a^\eps\|_{L^\infty(\dot B^{\f32-\alpha}_{2,1})}\|\nabla^2 u^\eps\|_{L^1_T(\dot B^{\f12}_{2,1})}\\
&\lesssim&\nu^{-1}(\eps\nu)^\alpha\| a^\eps\|_{L^\infty_T(\tilde B^{\f32,-}_{\eps\nu})}\|u^\eps\|_{L^1_T(\dot B^{\f52}_{2,1})}.
\end{array}
$$
Finally, because $\dot B^{-\f12-\alpha}_{2,1}\hookrightarrow \dot B^{s-2}_{p,1},$
$$\begin{array}{lll}
\|\frac{\eps a^\eps}{1+\eps a^{\eps}}a^\eps\nabla(V^\eps\!+\!\cR^\eps) \|_{\tilde L^2_T(\dot B^{s-2}_{p,1})}
&\!\!\!\!\lesssim\!\!\!\!&\|\eps a^\eps\|_{\tilde L^\infty_T(\dot B^{\f32-\alpha}_{2,1})}\|a^\eps\|_{\tilde L^\infty_T(\dot B^{\f12}_{2,1})}
\|\nabla(V^\eps\!+\!\cR^\eps)\|_{\tilde L^2_T(\dot B^{\f12}_{2,1})}\\
&\!\!\!\!\lesssim\!\!\!\!&\nu^{-1}(\eps\nu)^\alpha\| a^\eps\|_{\tilde L^\infty_T(\tilde B^{\f32,-}_{\eps\nu})
}\|a^\eps\|_{\tilde L^\infty_T(\dot B^{\f12}_{2,1})}
\|V^\eps+\cR^\eps\|_{\tilde L^2_T(\dot B^{\f32}_{2,1})}.
\end{array}
$$
Therefore, putting together all those inequalities
and using the  estimates provided by the previous steps
we conclude that
$$
\displaylines{\|\dv\|_{\tilde L^\infty_T(\dot B^{s-1}_{p,1}+\dot B^{s-2}_{p,1})}
+\|\dv\|_{\tilde L^2_T(\dot B^s_{p,1})+L^1_T(\dot B^s_{p,1})}\lesssim
\|\dv_0\|_{\dot B^{s-1}_{p,1}+\dot B^{s-2}_{p,1}}+\int_0^T\|\nabla V\|_{\dot B^{\f32}_{2,1}}\|\dT\|_{\dot B^{s-2}_{p,1}}\,dt\hfill\cr\hfill+C_0^\eps
(\|\dv\|_{L^2_T(\dot B^s_{p,1})+L^1_T(\dot B^{s}_{p,1})}+\|\dv\|_{\tilde L^\infty_T(\dot B^{s-1}_{p,1}+\dot B^{s-2}_{p,1})}
+\|\nabla\dV\|_{\tilde L^2_T(\dot B^{s-1}_{p,1})}\bigr)
+(C_0^\eps)^2(1+\nu^{-1}C_0^\eps)(\eps\nu)^\alpha.}
$$
If $\nu^{-1}C_0^\eps$ is suitably small, we thus deduce that
$$
\|\dv\|_{\tilde L^\infty_T(\dot B^{s-1}_{p,1}+\dot B^{s-2}_{p,1})}
+\|\dv\|_{\tilde L^2_T(\dot B^s_{p,1})+L^1_T(\dot B^s_{p,1})}\leq \beta(\eps)
+K\int_0^T\|\nabla V\|_{\dot B^{\f32}_{2,1}}\|\dT\|_{\dot B^{s-2}_{p,1}}\,dt
$$
with $\beta(\eps):= \|\dv_0\|_{\dot B^{s-1}_{p,1}+\dot B^{s-2}_{p,1}}
+C_0^\eps(\|\nabla\dV\|_{\tilde L^2_T(\dot B^{s-1}_{p,1})}+C_0^\eps\eps^\alpha).$
\medbreak
Therefore, plugging \eqref{eq:dT} in the above integral, and using Gronwall lemma, we get
\begin{align}\label{eq:final1}
\|\dT\|_{\tilde L^\infty_T(\dot B^{s-2}_{p,1})} \leq
 \bigg(\|\dT_0\|_{\dot B^{s-2}_{p,1}}+(1+T^\f12)\|\Theta_0\|_{\dot B^{\f12}_{2,1}}\big(\|\dv_0\|_{\dot B^{s-1}_{p,1}+\dot B^{s-2}_{p,1}}\hspace{1cm}
 \\\hspace{1cm} +C_0^\eps(\|\nabla\dV\|_{\tilde L^2_T(\dot B^{s-1}_{p,1})}+C_0^\eps\eps^{\f3p-s})\big) +(C_0^\eps)^2(T^{\f12}\eps^{\f3p-s}+\eps T^{ \f3{2p}-\f{s}2})\bigg)\qquad \nonumber\\\hspace{3cm}\times\exp\biggl(K\|\Theta_0\|_{\dot B^{\f12}_{2,1}}(1+T^{\f12})\|\nabla V\|_{L^1_T(\dot B^{\f32}_{2,1})}\biggr),\nonumber
\end{align}
and
\begin{align}  \label{eq:final2}
\|\dv\|_{\tilde L^\infty_T(\dot B^{s-1}_{p,1}+\dot B^{s-2}_{p,1})}
+\|\dv\|_{\tilde L^2_T(\dot B^s_{p,1})+L^1_T(\dot B^s_{p,1})}\hspace{5cm}\\\leq \|\dv_0\|_{\dot B^{s-1}_{p,1}+\dot B^{s-2}_{p,1}}
  +C_0^\eps(\|\nabla\dV\|_{\tilde L^2_T(\dot B^{s-1}_{p,1})}+C_0^\eps\eps^{\f3p-s})\nonumber\\
  +K \bigg(\|\dT_0\|_{\dot B^{s-2}_{p,1}}+(1+T^\f12)\|\Theta_0\|_{\dot B^{\f12}_{2,1}}\big(\|\dv_0\|_{\dot B^{s-1}_{p,1}+\dot B^{s-2}_{p,1}}
\nonumber \\+C_0^\eps(\|\nabla\dV\|_{\tilde L^2_T(\dot B^{s-1}_{p,1})}+C_0^\eps\eps^{\f3p-s})\big) +(C_0^\eps)^2(T^{\f12}\eps^{\f3p-s}+\eps T^{ \f3{2p}-\f{s}2})\bigg)\|\na V\|_{L^1_T(\dot B^{\f32}_{2,1})} \nonumber\\\times\exp\biggl(K\|\Theta_0\|_{\dot B^{\f12}_{2,1}}(1+T^{\f12})\|\nabla V\|_{L^1_T(\dot B^{\f32}_{2,1})}\biggr)\nonumber
 \end{align}
whenever $s\in[-1/2+4/p,3/p]$ and $s>1/2.$
This ensures the convergence of $(\Theta^\eps, \cP u^\eps)$ to $(\Theta, v)$ with an 
explicit rate.\ef


\section{Appendix}
In this Appendix, we  give   some  a priori estimates
involving hybrid Besov spaces.
Let us start with product estimates.

\begin{lem}\label{est-pro} Suppose that  $p\in[2,\infty]$ and $\beta\geq0.$ There exists a constant $C$
such that for all  $\alpha>0,$ we have
\beno
\|fg\|_{\tilde{B}^{s-\beta,-}_{p,\alpha}}&\lesssim& \|f\|_{\tilde{B}^{s,-}_{p,\alpha}}\|g\|_{\dot B^{\f32-\beta}_{2,1}}\quad\hbox{if }\
\beta-1/2<s\leq 3/p,\\
\|fg\|_{\tilde B^{s-\beta,+}_{p,\alpha}}&\lesssim& \|f\|_{\tilde{B}^{s,+}_{p,\alpha}} \|g\|_{\dot B^{\f32-\beta}_{2,1}}\quad\hbox{if }\
\beta-3/2<s\leq 3/p-1.
\eeno
\end{lem}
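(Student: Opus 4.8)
The plan is to reduce the two inequalities to standard paraproduct and remainder estimates in homogeneous Besov spaces, keeping track carefully of the low/high frequency split implicit in the hybrid norms $\tilde B^{s,\pm}_{p,\alpha}$. Recall that, by definition, $\|h\|_{\tilde B^{s-\beta,-}_{p,\alpha}}=\|h^\ell\|_{\dot B^{s-\beta-1}_{p,1}}+\alpha\|h^h\|_{\dot B^{s-\beta}_{p,1}}$ and similarly $\|h\|_{\tilde B^{s-\beta,+}_{p,\alpha}}=\|h^\ell\|_{\dot B^{s-\beta+1}_{p,1}}+\alpha^{-1}\|h^h\|_{\dot B^{s-\beta}_{p,1}}$, where $\ell,h$ denote the frequency cut at $2^j\alpha\simeq1$. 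So first I would split $f=f^\ell+f^h$ (with respect to the threshold $\alpha$), write $fg=f^\ell g+f^h g$, and estimate each piece separately, eventually re-splitting the product into its own low and high frequency parts.

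For the high-frequency part $f^h g$: since $g\in\dot B^{3/2-\beta}_{2,1}$ and, in the range $s\le 3/p$ (resp.\ $s\le 3/p-1$), the target exponent $s-\beta$ (resp.\ the pair $s-\beta\pm1$) is small enough, I would invoke the standard tame product estimate $\|uv\|_{\dot B^{s-\beta}_{p,1}}\lesssim \|u\|_{\dot B^{s-\beta}_{p,1}}\|v\|_{\dot B^{3/2}_{2,1}\cap\dot B^{3/2-\beta}_{2,1}}$ type bounds (Bony decomposition: $T_u v$ is controlled by $\|u\|_{\dot B^{s-\beta}_{p,1}}\|v\|_{L^\infty}$ and $L^\infty\hookleftarrow\dot B^{3/2}_{2,1}$; $T_v u$ and the remainder $R(u,v)$ are controlled using $v\in\dot B^{3/2-\beta}_{2,1}$ together with $\beta\ge0$ and the stated lower bound on $s$ to guarantee summability of the remainder series). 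The key is that the lower bounds $s>\beta-1/2$ (resp.\ $s>\beta-3/2$) are exactly what is needed for the remainder term $R(\partial$-type$,\cdot)$ to converge, i.e.\ for the negative Besov index $s-\beta-1$ (resp.\ $s-\beta$) to lie in the admissible range $>-3/2$ after accounting for the $L^2\times L^p$ Bernstein gain of $3/2$; the upper bound $s\le 3/p$ (resp.\ $3/p-1$) keeps the paraproduct $T_u v$ in the right space, since for $p\ge2$ one needs $s-\beta\le 3/p$ for $\dot B^{3/2-\beta}_{2,1}\hookrightarrow L^\infty$-type majorizations not to waste regularity. The factor $\alpha^{\pm1}$ on $f^h$ passes through linearly since the product of something frequency-localized at $\gtrsim\alpha^{-1}$ with $g$ still has its high-frequency part controlled, and its low-frequency part (which can only come from a remainder term) is harmless.

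For the low-frequency part $f^\ell g$: here $f^\ell\in\dot B^{s-1}_{p,1}$ (minus case) or $\dot B^{s+1}_{p,1}$ (plus case), and I would again use Bony's decomposition, now noting that $f^\ell$ being spectrally supported in a ball makes $T_g f^\ell$ and $R(g,f^\ell)$ easy, while $T_{f^\ell} g$ inherits the regularity of $g$ shifted by the $L^\infty$-norm (or rather $\dot B^{3/2-\beta}_{2,1}\times\dot B^0$-type) of $f^\ell$; the net effect is a contribution to the low-frequency part of $fg$ with one extra (minus case) or one fewer (plus case) derivative, matching the definition of the hybrid norm. One has to be a little careful that the product $f^\ell g$ may have high-frequency content (coming from $T_g f^\ell$ with $g$ at high frequency), and this must be reabsorbed into the $\dot B^{s-\beta}_{p,1}$ (times $\alpha^{\pm1}$) component; this works because the gain from $g\in\dot B^{3/2-\beta}_{2,1}$ and $f^\ell\in L^p$-bounded-type still lands in the right high-frequency space.

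The main obstacle, and the point requiring the most care, is the bookkeeping at the threshold frequency $2^j\alpha\simeq1$: products do not respect the low/high splitting, so $f^\ell g$ contributes to $(fg)^h$ and $f^h g$ contributes to $(fg)^\ell$, and one must check that in each case the ``wrong-side'' contribution is bounded by the correct hybrid norm with the correct power of $\alpha$ — this is where $\beta\ge0$ is used (to ensure the shift never goes the wrong way) and where one sees that the inequalities cannot be improved beyond the stated $s$-ranges. Concretely I would isolate, for the minus case, the bound $\|(f^\ell g)^h\|_{\dot B^{s-\beta}_{p,1}}\lesssim\alpha^{-1}\|f^\ell\|_{\dot B^{s-1}_{p,1}}\|g\|_{\dot B^{3/2-\beta}_{2,1}}$, which follows because on the support of $(f^\ell g)^h$ one has $2^j\gtrsim\alpha^{-1}$ so an extra factor $\alpha\,2^j\gtrsim1$ may be inserted for free — and symmetric statements for the other three cross terms. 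Once all four cross terms are collected, summing over dyadic blocks and using the triangle inequality for the $\ell^1$-type Besov norms yields the two claimed estimates. The remaining verifications (continuity of $T$, $T'$, $R$ on the relevant Besov spaces) are entirely standard and I would quote them from \cite{BCD}.
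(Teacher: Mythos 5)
Your proposal rests on the same two ingredients as the paper's proof: the standard product law $\dot B^{\sigma}_{p,1}\times\dot B^{\f32-\beta}_{2,1}\to\dot B^{\sigma-\beta}_{p,1}$ (valid for $\beta\geq0$ and $\beta-3/2<\sigma\leq 3/p$), applied at two adjacent regularity levels, together with the low/high split of $f$ at the threshold $\alpha$. The difference is in the bookkeeping. The paper first rescales to $\alpha=1$ and then observes that the hybrid norms are equivalent to ordinary intersection and sum norms, $\|\cdot\|_{\tilde B^{\s,-}_{p,1}}\approx\|\cdot\|_{\dot B^{\s-1}_{p,1}\cap\dot B^{\s}_{p,1}}$ and $\|\cdot\|_{\tilde B^{\s,+}_{p,1}}\approx\|\cdot\|_{\dot B^{\s}_{p,1}+\dot B^{\s+1}_{p,1}}$. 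This kills all of your cross-term analysis: for the minus case no splitting of $f$ is needed at all (just bound $fg$ in $\dot B^{s-\beta-1}_{p,1}$ and in $\dot B^{s-\beta}_{p,1}$ separately), and for the plus case one splits $f=f^\ell+f^h$ but a \emph{sum}-norm bound does not require identifying $f^\ell g$ and $f^hg$ with the actual low/high parts of the product, so the "wrong-side" contributions you worry about never have to be examined. Your route reaches the same conclusion, and your identification of which endpoint of the $s$-range is responsible for which constraint is accurate, but it is considerably heavier than necessary.

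One step as written does not prove what it claims. You justify $\alpha\|(f^\ell g)^h\|_{\dot B^{s-\beta}_{p,1}}\lesssim\|f^\ell\|_{\dot B^{s-1}_{p,1}}\|g\|_{\dot B^{\f32-\beta}_{2,1}}$ by saying that on the support of $(f^\ell g)^h$ one has $\alpha 2^j\gtrsim1$, so a factor may be inserted "for free". But converting $2^{j(s-\beta)}$ into $2^{j(s-\beta-1)}$ costs a factor $2^j$, which on the high-frequency side is only bounded \emph{below} by $\alpha^{-1}$, not above; the inequality $\alpha2^{j(s-\beta)}\geq 2^{j(s-\beta-1)}$ goes the wrong way for your purpose. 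The bound itself is true, but the correct source of the factor $\alpha^{-1}$ is the frequency localization of $f^\ell$, not of the product: since $2^j\leq\alpha^{-1}$ on $\Supp\widehat{f^\ell}$ one has $\|f^\ell\|_{\dot B^{s}_{p,1}}\leq\alpha^{-1}\|f^\ell\|_{\dot B^{s-1}_{p,1}}$, and then the product law at level $\sigma=s$ (which is where the hypothesis $s\leq3/p$ is used) gives $\alpha\|f^\ell g\|_{\dot B^{s-\beta}_{p,1}}\lesssim\alpha\|f^\ell\|_{\dot B^{s}_{p,1}}\|g\|_{\dot B^{\f32-\beta}_{2,1}}\leq\|f^\ell\|_{\dot B^{s-1}_{p,1}}\|g\|_{\dot B^{\f32-\beta}_{2,1}}$. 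With this correction (and its symmetric analogues for the other cross terms, e.g. $\|f^h\|_{\dot B^{s-1}_{p,1}}\leq\alpha\|f^h\|_{\dot B^{s}_{p,1}}$), your argument closes.
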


\proof We may assume that $\alpha=1$ making a change of variables if the case may be.
In order to prove the first inequality, it suffices to notice that for all $\sigma\in\R$ we have
\begin{equation}\label{eq:equiv1}
\|\cdot\|_{\tilde B^{\s,-}_{p,1}}\approx \|\cdot\|_{\dot B^{\s-1}_{p,1}\cap \dot B^{\s}_{p,1}}.
\end{equation}
Now, it is well known (see e.g. \cite{BCD}) that the usual product maps $\dot B^\s_{p,1}\times\dot B^{\f32-\beta}_{2,1}$
in $\dot B^{\s-\beta}_{p,1}$ whenever $\beta-3/2<\s\leq3/p$ and $\beta\geq0.$ Therefore
$$
\|fg\|_{\dot B^{s-\beta-1}_{p,1}}\lesssim \|f\|_{\dot B^{s-1}_{p,1}}\|g\|_{\dot B^{\f32-\beta}_{2,1}}\quad\hbox{and}\quad
\|fg\|_{\dot B^{s-\beta}_{p,1}}\lesssim \|f\|_{\dot B^{s}_{p,1}}\|g\|_{\dot B^{\f32-\beta}_{2,1}}.
$$
This implies the first inequality.
\medbreak
Proving the second inequality is rather similar: now we use the fact that
\begin{equation}\label{eq:equiv2}
\|\cdot\|_{\tilde B^{s,+}_{p,1}}\approx \|\cdot\|_{\dot B^{s}_{p,1}+\dot B^{s+1}_{p,1}}.
\end{equation}
Decomposing $f$ into low and high frequencies according to \eqref{eq:decompo}, we have
$$
fg=f^\ell g+f^h g.
$$
Now, the aforementioned product law ensures that
$$
\|f^\ell g\|_{\dot B^{s+1-\beta}_{p,1}}\lesssim \|f^\ell\|_{\dot B^{s+1}_{p,1}}\|g\|_{\dot B^{\f32-\beta}_{2,1}}\quad
\hbox{and}\quad
\|f^h g\|_{\dot B^{s-\beta}_{p,1}}\lesssim \|f^h\|_{\dot B^{s}_{p,1}}\|g\|_{\dot B^{\f32-\beta}_{2,1}}.
$$
So taking advantage of \eqref{eq:equiv2} completes the proof of the second inequality.
\ef
The following Strichartz estimates for the acoustic wave equation  are the
key to the proof of convergence.
\begin{prop}
\label{p:strichartz}
Let $(q,\cQ u)$ (with $\curl\cQ u=0$) satisfy the 3D acoustic wave equation
$$
\left\{\begin{array}{l} \pa_tq+\sqrt2\,\div\cQ u=F,\\[1ex]
\pa_t\cQ u+\sqrt2\,\nabla q=G.
\end{array}\right.
$$
Then for any $\alpha>0,$ $s\in\R$ and $p\in[2,\infty)$ the following
estimates hold true
$$\begin{array}{lll}
\|(q,\cQ u)\|_{\tilde L^{\f{2p}{p-2}}(\dot B_{p,1}^{s+{\frac2p}-1})}
&\!\!\!\!\leq\!\!\!\!& C\Bigl(\|(q_0,\cQ u_0)\|_{\dot B^s_{2,1}}+\|(F,G)\|_{L^1(\dot B^s_{2,1})}\Bigr),\\[1ex]
\|(q,\cQ u)\|_{\tilde L^{\f{2p}{p-2}}(\tilde B_{p,\alpha}^{s+{\frac2p}-1,\pm})}
&\!\!\!\!\leq\!\!\!\!& C\Bigl(\|(q_0,\cQ u_0)\|_{\tilde B^{s,\pm}_{2,\alpha}}+\|(F,G)\|_{L^1(\tilde B^{s,\pm}_{2,\alpha})}\Bigr).
\end{array}
$$
\end{prop}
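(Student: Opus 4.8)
The plan is to diagonalize the first-order system into a single scalar half-wave equation and then invoke frequency-localized Strichartz estimates for the unitary group $e^{it\Lambda}$ in $\R^3$. Since $\curl\cQ u=0$, set $d:=\Lambda^{-1}\div\cQ u$, so that $\cQ u$ is recovered from $d$ through an order-zero Fourier multiplier and $\div\cQ u=\Lambda d.$ Applying $\Lambda^{-1}\div$ to the second equation and using $\Lambda^{-1}\div\na=-\Lambda,$ the system becomes
$$\pa_t q+\sqrt2\,\Lambda d=F,\qquad \pa_t d-\sqrt2\,\Lambda q=\Lambda^{-1}\div G.$$
Setting $\psi:=q+id$ one gets $\pa_t\psi-i\sqrt2\,\Lambda\psi=F+i\Lambda^{-1}\div G=:\cF,$ whence, by Duhamel's formula, $\psi(t)=e^{i\sqrt2 t\Lambda}\psi_0+\int_0^t e^{i\sqrt2(t-\tau)\Lambda}\cF(\tau)\,d\tau.$ Since $\Id$, $\Lambda^{-1}\na$ and $\Lambda^{-1}\div$ are homogeneous Fourier multipliers of degree $0$ — hence bounded on every homogeneous and hybrid Besov space occurring in the statement — it suffices to prove both inequalities with $(q,\cQ u)$ replaced by $\psi$ and $(F,G)$ by $\cF$; from here on only the free evolution matters, and the constant $\sqrt2$ in the phase is irrelevant. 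The case $p=2$ (for which $2p/(p-2)=\infty$) is just the elementary $L^2$ energy identity, so one may assume $p\in(2,\infty).$

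The technical heart is the dispersive estimate for the three-dimensional wave group: for every $j\in\Z,$
$$\|e^{it\Lambda}\ddj f\|_{L^\infty}\lesssim 2^{3j}\bigl(1+2^j|t|\bigr)^{-1}\|\ddj f\|_{L^1},$$
which follows from stationary phase (see e.g. \cite{BCD}, Chapter 8). Interpolating this with the isometry $\|e^{it\Lambda}\ddj f\|_{L^2}=\|\ddj f\|_{L^2}$ and running the standard $TT^\ast$ argument — the relevant time-convolution kernel decaying like $|t|^{-(1-2/p)}$, to which the Hardy--Littlewood--Sobolev inequality applies since $0<1-2/p<1$ — yields, with $m:=2p/(p-2),$ the frequency-localized bound
$$\|e^{it\Lambda}\ddj f\|_{L^m(L^p)}\lesssim 2^{j(1-\frac2p)}\|\ddj f\|_{L^2},$$
the exponent $1-2/p$ being forced by the scaling $(t,x)\mapsto(2^jt,2^jx).$ One should note that $\frac1m+\frac1p=\frac12,$ i.e. $(m,p)$ lies on the admissible line for the wave equation in dimension $3$; since $p<\infty$ one has $m>2,$ so the forbidden endpoint $(2,\infty)$ is avoided and no Keel--Tao machinery is needed. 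By Minkowski's inequality applied to the Duhamel integral — and, crucially, no Christ--Kiselev lemma being required because the source is measured in $L^1$ in time — the same computation gives
$$\Bigl\|\int_0^t e^{i(t-\tau)\Lambda}\ddj g(\tau)\,d\tau\Bigr\|_{L^m(L^p)}\lesssim 2^{j(1-\frac2p)}\|\ddj g\|_{L^1(L^2)}.$$

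It then remains to reassemble the Besov norms. Applying the two displayed bounds to $\ddj\psi_0$ and $\ddj\cF,$ the gain $2^{j(1-2/p)}$ combines with the weight $2^{j(s+2/p-1)}$ defining the target norm to produce exactly $2^{js};$ summing over $j\in\Z$ and using the $\ell^1$ structure of $\dot B^\bullet_{\bullet,1}$ gives
$$\|\psi\|_{\tilde L^m(\dot B^{s+\frac2p-1}_{p,1})}\lesssim \|\psi_0\|_{\dot B^s_{2,1}}+\|\cF\|_{L^1(\dot B^s_{2,1})},$$
and, since the frequency-localized estimate carries no weight, the hybrid factor $(\min(\alpha^{-1},2^j))^{\pm1}$ simply rides along, producing $\|\psi\|_{\tilde L^m(\tilde B^{s+\frac2p-1,\pm}_{p,\alpha})}\lesssim \|\psi_0\|_{\tilde B^{s,\pm}_{2,\alpha}}+\|\cF\|_{L^1(\tilde B^{s,\pm}_{2,\alpha})}.$ Undoing the change of unknowns, and bounding $\|\cF\|\lesssim\|F\|+\|G\|$ by the boundedness of $\Lambda^{-1}\div,$ yields both claimed inequalities. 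The only genuinely non-routine ingredient is the frequency-localized Strichartz estimate, which is classical; the one point to handle carefully is the bookkeeping with the $\tilde L$-in-time norms and the hybrid Besov weights, but this works precisely because the dyadic bound supplies the gain $2^{j(1-2/p)}$ uniformly in $j$ and $\alpha.$
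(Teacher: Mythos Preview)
Your argument is correct. The paper's own proof is much terser: it simply cites \cite{D3} for the first inequality, and deduces the hybrid estimate by splitting $(q,\cQ u)$ into low and high frequencies and applying the first inequality with regularity index $s\pm1$ and $s$ respectively. Your proof is more self-contained in that you actually carry out the Strichartz argument from scratch---diagonalizing into a half-wave equation, invoking the frequency-localized dispersive bound, and running $TT^\ast$---rather than quoting it. For the hybrid version, your observation that the dyadic estimate is uniform in $j$ so that the weight $(\min(\alpha^{-1},2^j))^{\pm1}$ ``rides along'' is exactly equivalent to the paper's low/high decomposition; both exploit that the hybrid norm is built out of two ordinary Besov norms at shifted regularity. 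The upshot is that your route buys self-containment and makes the role of the dispersive mechanism explicit, while the paper's route is shorter and emphasizes that the hybrid case is a formal consequence of the classical one.
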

\proof The first inequality has been proved in \cite{D3}.
In order to prove the second one, one just has  to decompose
$(q,\cQ u)$ into low and high frequencies, that is $(q,\cQ u)=(q^\ell,\cQ u^\ell)+(q^h,\cQ u^h)$
and apply the first inequality with $s\pm1$ (resp. $s$) to $(q^\ell,\cQ u^\ell)$
(resp. $(q^h,\cQ u^h)$). \ef
Let us finally state
maximal regularity estimates for the heat equation, in hybrid Besov spaces.
\begin{prop}\label{p:heatestimates}
Let $u$ be a solution to the heat equation
$$
\left\{\begin{array}{l} \pa_tu-\Delta u=f,\\
u|_{t=0}=u_0.\end{array}\right.
$$
Then we have the following estimates
for any $\sigma\in\R,$ $\alpha>0,$ $p\in[1,\infty]$ and $q\geq r$:
$$\begin{array}{lll}
\|u\|_{\wt L_T^q(\dot B^{\s+\f2q}_{p,1})}&\lesssim& \|u_0\|_{\dot B^\s_{p,1}}+\|f\|_{\wt L_T^r(\dot B^{\s+\f2r-2}_{p,1})},\\[1ex]
\|u\|_{\wt L_T^q(\wt B^{\s+\f2q,\pm}_{p,\alpha})}&\lesssim& \|u_0\|_{\wt B^{\s,\pm}_{p,\alpha}}
+\|f\|_{\wt L_T^r(\wt B^{\s+\f2r-2,\pm}_{p,\alpha})}.
\end{array}
$$
\end{prop}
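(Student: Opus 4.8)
The plan is to reduce the whole statement to a single pointwise–in–frequency inequality, obtained from Duhamel's formula together with the elementary smoothing estimate for the heat semigroup on spectrally localized functions. Recall (see e.g. \cite{BCD}) that there exist constants $c,C>0$ depending only on the annulus supporting $\varphi$ such that, for every $j\in\Z$, every $t\ge0$ and every tempered distribution $z$,
$$
\|e^{t\Delta}\ddj z\|_{L^p}\le C\,e^{-ct2^{2j}}\,\|\ddj z\|_{L^p}\qquad\hbox{uniformly in }\ p\in[1,\infty].
$$
Since $\ddj$ commutes with $\pa_t-\Delta$, the block $u_j:=\ddj u$ solves $\pa_tu_j-\Delta u_j=f_j$ with $f_j:=\ddj f$ and $u_j|_{t=0}=\ddj u_0$, so Duhamel's formula and the decay estimate above give, for all $t\in[0,T]$,
$$
\|u_j(t)\|_{L^p}\le C\,e^{-ct2^{2j}}\|\ddj u_0\|_{L^p}+C\int_0^t e^{-c(t-\tau)2^{2j}}\|f_j(\tau)\|_{L^p}\,d\tau.
$$

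\textbf{Homogeneous estimate.} Next I would take the $L^q(0,T)$ norm in time. The first term is bounded by $C2^{-2j/q}\|\ddj u_0\|_{L^p}$, while the second is the convolution on $\R^+$ of $t\mapsto e^{-ct2^{2j}}$ with $t\mapsto\|f_j(t)\|_{L^p}$, so Young's convolution inequality yields
$$
\|u_j\|_{L^q_T(L^p)}\le C\,2^{-2j/q}\|\ddj u_0\|_{L^p}+C\,2^{-2j/a}\|f_j\|_{L^r_T(L^p)},\qquad \frac1a:=1+\frac1q-\frac1r.
$$
Here the assumption $q\ge r$ is precisely what guarantees $1/a\in[0,1]$, so that $L^a(\R^+)$ is meaningful and $\|e^{-c\,\cdot\,2^{2j}}\mathbf{1}_{\R^+}\|_{L^a}\simeq 2^{-2j/a}$ (the case $q=r$ giving $a=1$, i.e. the classical maximal $L^1$–in–time regularity). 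Multiplying by $2^{j(\sigma+2/q)}$, using that $(\sigma+2/q)-2/q=\sigma$ and $(\sigma+2/q)-2/a=\sigma+2/r-2$, and summing over $j\in\Z$ gives exactly the first inequality; note that since the time norm was taken \emph{before} summing in $j$, the natural space on the left-hand side is indeed $\wt L^q_T$.

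\textbf{Hybrid estimate.} For the second inequality I would just split along the threshold $2^j\alpha=1$: writing $u=u^\ell+u^h$, $u_0=u_0^\ell+u_0^h$ and $f=f^\ell+f^h$ as in \eqref{eq:decompo}, each of $u^\ell$ and $u^h$ again solves a heat equation with the correspondingly localized data and source. Applying the homogeneous inequality just proved with $\sigma$ replaced by $\sigma\pm1$ to the low–frequency part, and with $\sigma$ to the high–frequency part, and then recalling that
$$
\|z\|_{\wt L^q_T(\wt B^{s,\pm}_{p,\alpha})}=\|z^\ell\|_{\wt L^q_T(\dot B^{s\pm1}_{p,1})}+\alpha^{\mp1}\|z^h\|_{\wt L^q_T(\dot B^{s}_{p,1})},
$$
it remains only to add the low–frequency bound to $\alpha^{\mp1}$ times the high–frequency bound. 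Since the shift $\pm1$ is applied uniformly to $u_0$, $u$ and $f$ (with $s=\sigma+2/q$ on the solution side and $s=\sigma+2/r-2$ on the source side), one recovers precisely the stated inequality.

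\textbf{Main difficulty.} Because no nonlinearity is present, no step here is genuinely hard; the only points deserving care are the bookkeeping of the Young–inequality exponents and the verification that $q\ge r$ is exactly the condition that makes the time–convolution step valid (with $q>r$ trading integrability in time for a loss of $2(1/r-1/q)$ derivatives). A routine approximation argument—or first working with data truncated in frequency—legitimizes the above formal manipulations as an a priori estimate.
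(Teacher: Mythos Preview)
Your proof is correct and follows essentially the same approach as the paper: the first inequality is the classical maximal regularity estimate for the heat equation in homogeneous Besov spaces (which the paper simply cites from \cite{BCD}, Chap.~3, and you spell out via the localized semigroup decay, Duhamel, and Young's inequality), and the second inequality is obtained exactly as in the paper by splitting into low and high frequencies and applying the first inequality with the appropriate shift in $\sigma$. Your treatment is just a more detailed version of the paper's two-line argument.
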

\proof
The first inequality is classical (see e.g. \cite{BCD}, Chap. 3).
The second inequality may be obtained from the first one after decomposing $u,$ $u_0$ and $f$
into low and high frequencies.
\ef

\subsubsection*{Acknowledgments.}  The work was  initiated at LAMA of Universit\'e Paris-Est Cr\'eteil
 while  the second author  was on a post-doctoral position  supported by the   CNRS at Cr\'eteil.
 The second author is also supported by NSF of China under Grant 11001149.

\end{document}